\date{}
\renewcommand{\uppercasenonmath}[1]{}
\numberwithin{equation}{section} \theoremstyle{plain}
\newtheorem*{thm*}{Main Theorem}
\newtheorem{thm}{Theorem}[section]
\newtheorem{cor}[thm]{Corollary}
\newtheorem*{cor*}{Corollary}
\newtheorem{lem}[thm]{Lemma}
\newtheorem*{lem*}{Lemma}
\newtheorem{prop}[thm]{Proposition}
\newtheorem*{prop*}{Proposition}
\newtheorem{rem}[thm]{Remark}
\newtheorem*{rem*}{Remark}
\newtheorem{exa}[thm]{Example}
\newtheorem*{exa*}{Example}
\newtheorem{df}[thm]{Definition}
\newtheorem*{df*}{Definition}
\newtheorem*{conj*}{Conjecture}
\newtheorem{Fa}[thm]{Fact}
\newtheorem*{Fa*}{Fact}
\newtheorem{Qu}[thm]{Question}
\newtheorem*{Qu*}{Question}
\newtheorem*{ack*}{ACKNOWLEDGEMENTS}
\newcommand{\pf}{\noindent\begin {proof}}
\newcommand{\epf}{\end{proof}}
\newcommand{\Ext}{\mbox{\rm Ext}}
\newcommand{\Hom}{\mbox{\rm Hom}}
\newcommand{\Tor}{\mbox{\rm Tor}}
\def\Im{\mathop{\rm Im}\nolimits}
\def\Ker{\mathop{\rm Ker}\nolimits}
\def\Coker{\mathop{\rm Coker}\nolimits}
\def\cTr{\mathop{\rm cTr}\nolimits}
\def\coOmega{\mathop{{\rm co}\Omega}\nolimits}
\def\mod{\mathop{\rm mod}\nolimits}
\def\Mod{\mathop{\rm Mod}\nolimits}
\def\fd{\mathop{\rm fd}\nolimits}
\def\id{\mathop{\rm id}\nolimits}
\def\pd{\mathop{\rm pd}\nolimits}
\def\min{\mathop{\rm min}\nolimits}
\def\sup{\mathop{\rm sup}\nolimits}
\def\inf{\mathop{\rm inf}\nolimits}
\def\add{\mathop{\rm add}\nolimits}
\def\Add{\mathop{\rm Add}\nolimits}
\def\Prod{\mathop{\rm Prod}\nolimits}
\def\dim{\mathop{\rm dim}\nolimits}
\def\Hom{\mathop{\rm Hom}\nolimits}
\def\Ext{\mathop{\rm Ext}\nolimits}
\def\sup{\mathop{\rm sup}\nolimits}
\def\lim{\mathop{\underrightarrow{\rm lim}}\nolimits}
\def\limit{\mathop{\rm lim}\nolimits}
\def\grade{\mathop{\rm grade}\nolimits}
\def\Adst{\mathop{\rm Adst}\nolimits}
\def\Stat{\mathop{\rm Stat}\nolimits}
\def\amp{\mathop{\rm amp}\nolimits}
\def\height{\mathop{\rm ht}\nolimits}
\def\rad{\mathop{\rm rad}\nolimits}
\def\Spec{\mathop{\rm Spec}\nolimits}
\def\Ecograde{\mathop{\rm {E{\text-}cograde}}\nolimits}
\def\Tcograde{\mathop{\rm {T{\text-}cograde}}\nolimits}
\def\sEcograde{\mathop{\rm {s.E{\text-}cograde}}\nolimits}
\def\sTcograde{\mathop{\rm {s.T{\text-}cograde}}\nolimits}
\def\sgrade{\mathop{\rm s.grade}\nolimits}
\def\rad{\mathop{\rm rad}\nolimits}
\begin{document}
\begin{center}
{\large  \bf   Homological Aspects of the Dual Auslander Transpose II}

\vspace{0.5cm}
Xi Tang\\
{\tiny{\it College of Science, Guilin University of Technology, Guilin 541004, Guangxi Province, P.R. China\\
E-mail: tx5259@sina.com.cn\\}}

\bigskip

Zhaoyong Huang\\
{\tiny{\it Department of Mathematics, Nanjing University, Nanjing 210093, Jiangsu Province, P.R. China\\
E-mail: huangzy@nju.edu.cn\\}}
\end{center}

\bigskip
\centerline { \bf  Abstract}
\bigskip
\leftskip10truemm \rightskip10truemm \noindent
Let $R$ and $S$ be rings and $_R\omega_S$ a semidualizing bimodule. We prove that there exists a Morita equivalence between
the class of $\infty$-$\omega$-cotorsionfree modules and a subclass of the class of $\omega$-adstatic modules.
Also we establish the relation between the relative homological dimensions of a module $M$
and the corresponding standard homological dimensions of $\Hom(\omega,M)$. By investigating the properties of the Bass
injective dimension of modules (resp. complexes), we get some equivalent characterizations of semi-tilting modules
(resp. Gorenstein artin algebras). Finally we obtain a dual version of the Auslander-Bridger's approximation theorem.
As a consequence, we get some equivalent characterizations of Auslander $n$-Gorenstein artin algebras.
\vbox to 0.3cm{}\\ \\
{\it Key Words:}  Semidualizing bimodules; $\infty$-$\omega$-cotorsionfree modules; Bass classes; $\mathcal{X}$-projective dimension;
$\mathcal{X}$-injective dimension; Bass injective dimension; (Strong) $\Ext$-cograde, (Strong) $\Tor$-cograde.\\
{\it 2010 Mathematics Subject Classification:} 16E10, 18G25, 16E05, 16E30. \leftskip0truemm \rightskip0truemm

\section { \bf Introduction}

Semidualizing bimodules arise naturally in the investigation of various duality theories in commutative algebra. The study of such modules
was initiated by Foxby in \cite{Fox} and by Golod in \cite{Go}. Then Holm and White extended in \cite {HW} this notion to arbitrary associative rings,
while Christensen in \cite{Lar2} and Kubik in \cite{Ku} extended it to semidualizing complexes and quasidualizing modules respectively.
The study of semidualizing bimodules or complexes was connected to the so-called Auslander classes and Bass classes defined by Avramov and Foxby in \cite{Av}
and by Christensen in \cite{Lar2}. Semidualizing bimodules or complexes and the corresponding
Auslander/Bass classes have been studied by many authors; see, for example, \cite{ATY,Av,Lar2,Lar1,LH,EH,En,HW,TH} and so on.
In order to dualize the important and useful notions of Auslander transpose of modules and $n$-torsionfree modules,
we introduced in \cite{TH} the notions of cotranspose of modules and $n$-cotorsionfree modules with respect to
a semidualizing bimodule, and obtained several dual counterparts of interesting results.
Based on these mentioned above, we study further homological properties of cotranspose of modules, $n$-cotorsionfree modules
and related modules.

The paper is organized as follows.

In Section 2, we give some terminology and some preliminary results. In particular, we prove that if $(R,\mathfrak{m},k)$
is a commutative Gorenstein complete local ring with $\dim R>0$ and $\mathfrak{q}$ is a prime ideal of $R$ with
height non-zero, then the tensor product of the injective envelopes of $R/\mathfrak{q}$ and $k$ is equal to zero.
This gives a negative answer to an open question of Kubik posed in \cite{Ku} about quasidualizing modules.

Let $R$ and $S$ be rings and $_R\omega_S$ a semidualizing bimodule. In Section 3, we prove that if the projective dimension of $_R\omega$ is finite,
then the class of $\infty$-$\omega$-cotorsionfree modules is contained in the right orthogonal class of $_R\omega$; dually, if the projective dimension of
$\omega_S$ is finite, then the above inclusion relation between these two classes of modules is reverse.
Also we prove that there exists a Morita equivalence between
the class of $\infty$-$\omega$-cotorsionfree modules and a subclass of the class of $\omega$-adstatic modules. Finally,
we establish the relation between the relative homological dimensions of a module $M$
and the corresponding standard homological dimensions of $\Hom(\omega,M)$.

In Section 4, we first give some criteria for computing the Bass injective dimension of modules in term of the vanishing of
$\Ext$-functors and some special approximations of modules. Then, motivated by the philosophy of \cite{K},
we introduce the notion of semi-tilting bimodules in general case,
and prove that $_R\omega_S$ is right semi-tilting if and only if the Bass injective dimension of $_RR$ is finite.

In Section 5, we extend the Bass class and the Bass injective dimension of modules with respect to $\omega$ to that of homologically bounded complexes.
We show that a homologically bounded complex has finite Bass injective dimension if and only if it admits a special quasi-isomorphism in
the derived category of the category of modules. As an application of this result, we get some equivalent characterizations of
Gorenstein artin algebras.

In Section 6, we first introduce the notions of (strong) $\Ext$-cograde and $\Tor$-cograde of modules with respect to $\omega$.
Then we obtain a dual version of the Auslander-Bridger's approximation theorem
(\cite[Proposition 3.8]{F}) as follows. For any left $R$-module $M$ and $n\geqslant 1$, if the $\Tor$-cograde of $\Ext^i_R(\omega,M)$ with respect
to $\omega$ is at least $i$ for any $1\leqslant i\leqslant n$, then there exists a left $R$-module $U$ and a homomorphism $f: U\rightarrow M$
of left $R$-modules satisfying the following properties:
(1) The injective dimension of $U$ relative to the class of $\omega$-projective modules is at most $n$,
and (2) $\Ext^i_R(\omega,f)$ is bijective for any $1\leqslant i\leqslant n$. As an application of this result, we prove that
for any $n\geqslant 1$, the strong $\Ext$-cograde of $\Tor_i^S(\omega,N)$ with respect to $\omega$ is at least $i$ for any left $S$-module $N$
and $1\leqslant i \leqslant n$ if and only if the strong $\Tor$-cograde of $\Ext^i_R(\omega,M)$ with respect to $\omega$
is at least $i$ for any left $R$-module $M$ and $1\leqslant i \leqslant n$. Furthermore,
we get some equivalent characterizations of Auslander $n$-Gorenstein artin algebras.

\section {\bf Preliminaries}

Throughout this paper, $R$ and $S$ are
fixed associative rings with unites. We use $\Mod R$ (resp.
$\Mod S^{op}$) to denote the class of left $R$-modules (resp. right
$S$-modules). Let $M\in \Mod R$. We use $\pd_RM$, $\fd_RM$ and $\id_RM$ to denote the projective, flat
and injective dimensions of $M$ respectively, and use $\Add_RM$ (resp. $\Prod_RM$) to denote the subclass of $\Mod R$
consisting of all direct summands of direct sums (resp. direct products) of copies of $M$. We use
$$0\rightarrow M \rightarrow I^0(M) \stackrel{f^0}{\longrightarrow} I^1(M)\stackrel{f^1}{\longrightarrow}\cdots
\stackrel{f^{i-1}}{\longrightarrow} I^i(M)\stackrel{f^{i}}{\longrightarrow} \cdots\eqno{(1.1)}$$
to denote a minimal injective resolution of $M$. For any $n\geqslant 1$,
$\coOmega^n(M):= \Im f^{n-1}$ is called the {\bf $n$-th cosyzygy}
of $M$, and in particular, $\coOmega^{0}(M):=M$.

\begin{df} \label{df: 2.1} {\rm (\cite{HW}). (1) An ($R$-$S$)-bimodule $_R\omega_S$ is called
\textbf{semidualizing}\footnote{In \cite{TH} and the original version of this paper, we use $C$ to denote the given semidualizing module.
The referee suggests the following: ``The notation $\cTr_C M$ (see Definition 2.5 below) is very confusing. I am not sure how the first `c' 
is distinguished with the semidualizing module $C$, particularly when writing it on the
blackboard. It would be better to change the notation or quit using $C$ for the
semidualzing module." Following this suggestion, we denote the given semidualizing module by substituting $\omega$ for $C$.}
if the following conditions are satisfied:
\begin{enumerate}
\item[(a1)] $_R\omega$ admits a degreewise finite $R$-projective resolution.
\item[(a2)] $\omega_S$ admits a degreewise finite $S$-projective resolution.
\item[(b1)] The homothety map $_RR_R\stackrel{_R\gamma}{\rightarrow} \Hom_{S^{op}}(\omega,\omega)$ is an isomorphism.
\item[(b2)] The homothety map $_SS_S\stackrel{\gamma_S}{\rightarrow} \Hom_{R}(\omega,\omega)$ is an isomorphism.
\item[(c1)] $\Ext_{R}^{\geqslant 1}(\omega,\omega)=0.$
\item[(c2)] $\Ext_{S^{op}}^{\geqslant 1}(\omega,\omega)=0.$
\end{enumerate}

(2) A semidualizing bimodule $_R\omega_S$ is called {\bf faithful} if the following conditions are satisfied:
\begin{enumerate}
\item[(f1)] If $M\in \Mod R$ and $\Hom_R(\omega,M)=0$, then $M=0$.
\item[(f2)] If $N\in \Mod S^{op}$ and $\Hom_{S^{op}}(\omega,N)=0$, then $N=0$.
\end{enumerate}}
\end{df}

Typical examples of semidualizing bimodules include the free module of rank one, dualizing modules over a Cohen-Macaulay local ring
and the ordinary Matlis dual bimodule $_\Lambda D(\Lambda)_\Lambda$ of $_\Lambda \Lambda_\Lambda$ over an artin algebra $\Lambda$.
Any semidualizing bimodule over commutative rings is faithful (\cite[Proposition 3.1]{HW}).
Semidualizing bimodules occur in the literature with several different names,
e.g., in the work of \cite{Fox, Go, MR, Wa}.

Let $R$ be a commutative noetherian local ring with maximal ideal $\mathfrak{m}$ and residue
field $k=R/\mathfrak{m}$. According to \cite{Ku}, an artinian $R$-module $T$ is called \textbf{quasidualizing}
if the homothety $\hat{R}\rightarrow \Hom_R(T,T)$
is an isomorphism (where $\hat{R}$ is the $\mathfrak{m}$-adic completion of $R$) and $\Ext^{i\geqslant 1}_R(T,T)=0$.
It was proved in \cite[Lemma 3.11]{Ku} that if $L$ and $T$ are $R$-modules with $T$ quasidualizing such that $\Hom_R(T,L)=0$,
then $L=0$. Motivated by this result and \cite[Lemma 3.1]{HW}, an open question was posed in \cite{Ku} as follows.

\begin{Qu} \label{Qu: 2.2} {\rm (\cite[Question 3.12]{Ku})} Let $R$ be a commutative noetherian local ring.
If $L$ and $T$ are $R$-modules with $T$ quasidualizing such that $T\otimes_RL=0$, then $L=0$?
\end{Qu}

The following result shows that the answer to this question is negative in general.

\begin{prop} \label{prop: 2.3}  Let $R$ be a commutative noetherian complete local ring with maximal ideal $\mathfrak{m}$ and residue
field $k=R/\mathfrak{m}$. If $R$ is Gorenstein (that is, $\id_RR<\infty$) with $\dim R>0$, then
$E^0(R/\mathfrak{q})\otimes_RE^0(k)=0$ for any prime ideal $\mathfrak{q}$ with $\height(\mathfrak{q})>0$, where $\height(\mathfrak{q})$
is the height of $\mathfrak{q}$.
\end{prop}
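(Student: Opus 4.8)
The plan is to reduce the claim to a statement about the support of the relevant modules. Recall that over a commutative noetherian ring, $E^0(R/\mathfrak{q})$ is the injective envelope $E(R/\mathfrak{q})$, which is a module over the localization $R_{\mathfrak{q}}$; in particular every element of $E(R/\mathfrak{q})$ is annihilated by a power of $\mathfrak{q}R_{\mathfrak{q}}$ after localizing, and $\Supp_R E(R/\mathfrak{q}) = \{\mathfrak{p} : \mathfrak{p}\subseteq \mathfrak{q}\}$ (equivalently, $V(\mathfrak{q})$ plays no role; what matters is $\mathfrak{q}$ is the unique associated prime). Likewise $E^0(k) = E(k)$ is $\mathfrak{m}$-torsion: every element is annihilated by some power of $\mathfrak{m}$, and $\Supp_R E(k) = \{\mathfrak{m}\}$. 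The first and main step is therefore to show that for a finitely generated module $M$ and an arbitrary module $N$, $\Supp_R(M\otimes_R N)\subseteq \Supp_R M \cap \Supp_R N$, and more precisely that $E(R/\mathfrak{q})\otimes_R E(k)$ is an $R_{\mathfrak{q}}$-module on the one hand (being a quotient of a tensor product whose first factor is an $R_{\mathfrak{q}}$-module) and an $\mathfrak{m}$-torsion module on the other. Since $\height(\mathfrak{q})>0$ forces $\mathfrak{q}\neq\mathfrak{m}$ — here we also use $\dim R>0$ only to guarantee such a $\mathfrak{q}$ exists, so the statement is non-vacuous — an $R_{\mathfrak{q}}$-module that is $\mathfrak{m}$-torsion must vanish: pick any $x$ in it, then $\mathfrak{m}^t x = 0$ for some $t$, but $\mathfrak{m}\not\subseteq\mathfrak{q}$ gives an element $s\in\mathfrak{m}\setminus\mathfrak{q}$, and $s^t$ acts invertibly on an $R_{\mathfrak{q}}$-module while $s^t x = 0$, so $x=0$.

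To make the two ``one hand / other hand'' claims precise I would argue as follows. For the $R_{\mathfrak{q}}$-module structure: $E(R/\mathfrak{q})$ is naturally an $R_{\mathfrak{q}}$-module (standard fact, e.g. Matlis theory), hence $E(R/\mathfrak{q})\otimes_R E(k) \cong E(R/\mathfrak{q})\otimes_R R_{\mathfrak{q}}\otimes_R E(k) \cong E(R/\mathfrak{q})\otimes_{R_\mathfrak{q}} (R_\mathfrak{q}\otimes_R E(k)) = E(R/\mathfrak{q})\otimes_{R_\mathfrak{q}} E(k)_\mathfrak{q}$, which is a module over $R_{\mathfrak{q}}$. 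For the $\mathfrak{m}$-torsion claim: every element of $E(R/\mathfrak{q})\otimes_R E(k)$ is a finite sum $\sum a_i\otimes b_i$ with $b_i\in E(k)$; choosing $t$ with $\mathfrak{m}^t b_i = 0$ for all $i$, we get $\mathfrak{m}^t\cdot(\sum a_i\otimes b_i) = \sum a_i\otimes \mathfrak{m}^t b_i = 0$. Combining, each element of the tensor product is killed by a power of $\mathfrak{m}$ yet lives in an $R_{\mathfrak{q}}$-module; by the displayed localization argument it is zero, so $E^0(R/\mathfrak{q})\otimes_R E^0(k) = 0$.

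I would then record the one genuinely geometric input — that $E(R/\mathfrak{q})$ is an $R_\mathfrak{q}$-module — with a reference to Matlis duality (or reprove it quickly: $E(R/\mathfrak{q}) = \varinjlip \Hom_R(R/\mathfrak{q}^{(n)}, E(R/\mathfrak{q}))$ need not hold in general, so better to cite that $E_R(R/\mathfrak{q}) \cong E_{R_\mathfrak{q}}(\kappa(\mathfrak{q}))$ as $R_\mathfrak{q}$-modules). The hypothesis that $R$ is Gorenstein complete local with $\dim R>0$ is, interestingly, barely used in this argument: completeness and Gorensteinness are not needed for the vanishing itself; $\dim R > 0$ is needed only so that a prime $\mathfrak{q}$ with $\height(\mathfrak{q})>0$ exists, making the conclusion meaningful. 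So the main ``obstacle'' is not a hard estimate but rather pinning down cleanly that a module which is simultaneously $R_\mathfrak{q}$-linear and $\mathfrak{m}$-power-torsion must be zero when $\mathfrak{q}\subsetneq\mathfrak{m}$, and citing the right form of the structure theorem for injective envelopes of $R/\mathfrak{q}$; everything else is bookkeeping with tensor products.
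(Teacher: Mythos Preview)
Your argument has a genuine gap: the claim that $\height(\mathfrak{q})>0$ forces $\mathfrak{q}\neq\mathfrak{m}$ is false. In a local ring of positive Krull dimension, $\height(\mathfrak{m})=\dim R>0$, so $\mathfrak{q}=\mathfrak{m}$ is permitted by the hypothesis, and your localization argument collapses there (when $\mathfrak{q}=\mathfrak{m}$ there is no $s\in\mathfrak{m}\setminus\mathfrak{q}$). The case $\mathfrak{q}=\mathfrak{m}$, i.e.\ showing $E(k)\otimes_R E(k)=0$, is exactly where the Gorenstein and completeness hypotheses do real work. A secondary slip: the support of $E(R/\mathfrak{q})$ is $V(\mathfrak{q})$, not $\{\mathfrak{p}:\mathfrak{p}\subseteq\mathfrak{q}\}$; with the correct supports the intersection with $\operatorname{Supp}E(k)=\{\mathfrak{m}\}$ is nonempty, so a bare support-intersection heuristic cannot conclude vanishing---though your actual argument does not rely on it.

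For $\mathfrak{q}\neq\mathfrak{m}$ your approach is correct and genuinely different from the paper's. The paper proceeds via duality: it uses Bass's description of the minimal injective resolution of a Gorenstein ring to embed $R\hookrightarrow E^0(R)=\bigoplus_{\height\mathfrak{p}=0}E(R/\mathfrak{p})$ and deduce $\Hom_R(E(R/\mathfrak{q}),R)=0$, then uses completeness to identify $\Hom_R(E(k),E(k))\cong R$, applies adjunction to obtain $\Hom_R(E(R/\mathfrak{q})\otimes_R E(k),E(k))=0$, and concludes since $E(k)$ is a cogenerator. This handles all $\mathfrak{q}$ of positive height uniformly, including $\mathfrak{m}$. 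Your route is more elementary for non-maximal $\mathfrak{q}$ and indeed needs neither Gorensteinness nor completeness there, but you must supply a separate argument for $\mathfrak{q}=\mathfrak{m}$; one fix is to use that a Gorenstein local ring is Cohen--Macaulay, so $\dim R>0$ gives a regular element $r\in\mathfrak{m}$, and then observe that $r$ acts surjectively on the divisible module $E(k)$ while every element of $E(k)$ is killed by some power of $r$.
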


\begin{proof} By \cite[Theorem 4.2]{M}, $E^0(k)$ is quasidualizing. Since $R$ is Gorenstein,
it follows from \cite[Fundamental Theorem]{Ba} that $E^i(R)=\oplus_{\height(\mathfrak{p})=i}E(R/\mathfrak{p})$
with $\mathfrak{p}\in\Spec(R)$ (the prime spectrum of $R$) for any $i\geqslant 0$. In particular,
$E^0(R)=\oplus_{\height(\mathfrak{p})=0}E(R/\mathfrak{p})$ with $\mathfrak{p}\in\Spec(R)$.
On the other hand, for any $\mathfrak{p},\mathfrak{q}\in \Spec(R)$ with $\height(\mathfrak{p})=0$
and $\height(\mathfrak{q})>0$, we have $\Hom_R(E^0(R/\mathfrak{q}), E^0(R/\mathfrak{p}))=0$. So
$\Hom_R(E^0(R/\mathfrak{q}), E^0(R))=0$ and $\Hom_R(E^0(R/\mathfrak{q}), R)=0$. Thus we have
\begin{align*}
&\ \ \ \ \ \Hom_R(E^0(R/\mathfrak{q})\otimes_RE^0(k), E^0(k))\\
& \cong\Hom_R(E^0(R/\mathfrak{q}),\Hom_R(E^0(k), E^0(k))) \ \text{(by the adjoint isomorphism theorem)}\\
& \cong\Hom_R(E^0(R/\mathfrak{q}),R)\ \text{(by \cite[Theorem 4.2]{M})}\\
& =0.
\end{align*}
Because $E^0(k)$ is an injective cogenerator for $\Mod R$, $E^0(R/\mathfrak{q})\otimes_RE^0(k)=0$.
\end{proof}

From now on, $_R\omega_S$ is a semidualizing bimodule.
For convenience, we write $(-)_*=\Hom_R(\omega,-)$ and $_R\omega^{\perp}=\{M\in\Mod R\mid\Ext_R^{i\geqslant 1}(\omega,M)=0\}$.

Let $M\in \Mod R$ and $N\in \Mod S$. Then we have the following two canonical valuation homomorphisms:
$$\theta_M:\omega\otimes_SM_*\rightarrow M$$ defined by $\theta_M(x\otimes f)=f(x)$ for any $x\in \omega$ and $f\in M_*$; and
$$\mu_N: N\rightarrow (\omega\otimes_SN)_*$$ defined by $\mu_N(y)(x)=x\otimes y$ for any $y\in N$ and $x\in \omega$.
Following \cite{W}, $M$ (resp. $N$) is called {\bf $\omega$-static} (resp. {\bf $\omega$-adstatic}) if $\theta_M$ (resp. $\mu_N$) is an isomorphism.
We denote by $\Stat(\omega)$ and $\Adst(\omega)$ the class of all $\omega$-static modules and the class of
all $\omega$-adstatic modules, respectively.

\begin{df} \label{df: 2.4} {\rm (\cite{HW}). The \textbf{Bass class} $\mathcal{B}_\omega(R)$
with respect to $\omega$ consists of all left $R$-modules $M$ satisfying
\begin{enumerate}
\item[(B1)] $M\in{_R\omega^{\perp}}$,
\item[(B2)] $\Tor_{\geqslant 1}^S(\omega,M_*)=0$, and
\item[(B3)] $M\in\Stat(\omega)$, that is, $\theta_M$ is an isomorphism in $\Mod R$.
\end{enumerate}
The {\bf Auslander class} $\mathcal
{A}_{\omega}(S)$ with respect to $\omega$ consists of all left $S$-modules $N$
satisfying
\begin{enumerate}
\item[(A1)] $\Tor^{S}_{i\geqslant 1}(\omega, N)=0$,
\item[(A2)] $\omega\otimes _{S}N\in{_R\omega^{\perp}}$, and
\item[(A3)] $N\in\Adst(\omega)$, that is, $\mu_{N}$ is an isomorphism in $\Mod S$.
\end{enumerate}}
\end{df}

\begin{df} \label{df: 2.5} {\rm (\cite{TH}). Let $M\in \Mod R$ and $n\geqslant 1$.
\begin{enumerate}
\item $\cTr_\omega M:=\Coker {f^0}_*$ is called the {\bf cotranspose} of $M$ with respect to $_R\omega_S$,
where $f^0$ is as in (1.1).
\item
$M$ is called
{\bf $n$-$\omega$-cotorsionfree} if $\Tor_{1\leqslant i\leqslant n}^S(\omega,\cTr_\omega M)=0$; and $M$ is called {\bf $\infty$-$\omega$-cotorsionfree}
if it is $n$-$\omega$-cotorsionfree for all $n$. The class of all $\infty$-$\omega$-cotorsionfree modules is denoted by ${\rm c}\mathcal{T}(R)$.
In particular, every module in $\Mod R$ is 0-$\omega$-cotorsionfree.
\end{enumerate}}
\end{df}
By \cite[Proposition 3.2]{TH}, a module is 2-$\omega$-cotorsionfree if and only if it is $\omega$-static.

Let $\mathcal{W}\subseteq\mathcal{X}$ be subclasses of $\Mod R$. Recall from \cite{AB} that
$\mathcal{W}$ is called a {\bf generator} for $\mathcal{X}$ if for any $X\in\mathcal{X}$, there exists an exact
sequence $0\rightarrow X^{'}\rightarrow W\rightarrow X\rightarrow 0$ in $\Mod R$ with $W\in\mathcal{W}$ and $X^{'}\in\mathcal{X}$;
$\mathcal{W}$ is called an {\bf $\Ext$-projective generator} for $\mathcal{X}$ if $\mathcal{W}$ is a generator for $\mathcal{X}$
and $\Ext_{R}^{i\geqslant 1}(W,X)=0$ for any $X\in \mathcal{X}$ and $W\in \mathcal{W}$. Also recall that $\mathcal{X}$ is called {\bf coresolving}
if it is closed under extensions, cokernels of monomorphisms and it contains all injective modules in $\Mod R$.

Let $M\in \Mod R$. An exact sequence (of finite or infinite length):
$$\cdots \to X_n \to \cdots \to X_1 \to X_0 \to M \to 0$$
in $\Mod R$ is called an {\bf $\mathcal{X}$-resolution} of
$M$ if all $X_i$ are in $\mathcal{X}$; furthermore, such an
$\mathcal{X}$-resolution is called {\bf proper}
if it remains exact after applying the functor $\Hom_R(X,-)$
for any $X\in \mathcal{X}$. The \textbf{$\mathcal{X}$-projective
dimension} $\mathcal{X}$-$\pd_RM$ of $M$ is defined as
$\inf\{n\mid$ there exists an $\mathcal
{X}$-resolution $0 \to X_n \to \cdots \to X_1\to X_0 \to M\to 0$
of $M$ in $\Mod R\}$. Dually, the notions of an \textbf{$\mathcal{X}$-coresolution},
an \textbf{$\mathcal{X}$-coproper coresolution}
and the \textbf{$\mathcal{X}$-injective dimension} $\mathcal{X}$-$\id_RM$ of $M$ are defined.

\begin{df} \label{df: 2.6}
{\rm (\cite{EJ}) A module $M\in \Mod R$ is
called \textbf{Gorenstein projective} if there exists an exact sequence of projective modules:
$${\bf P}:=\cdots \rightarrow P_1\rightarrow P_0\rightarrow P^{0}\rightarrow
P^{1}\rightarrow \cdots$$ in $\Mod R$ satisfying the conditions: (1) it remains exact after applying the functor
$\Hom_R(-,P)$ for any projective module $P$ in $\Mod R$; and (2) $M\cong\Im (P_0\rightarrow
P^{0})$. Dually, the notion of Gorenstein injective modules is defined. We use $\mathcal{GP}(R)$ (resp. $\mathcal{GI}(R)$)
to denote the subclass of $\Mod R$ consisting of Gorenstein projective (resp. Gorenstein injective) modules.}
\end{df}

\begin{Fa}\label{fa: 2.7}
{\rm \begin{enumerate}
\item[]
\item $\mathcal{B}_\omega(R)$ is
coresolving and $\Add _R\omega$ is an $\Ext$-projective generator for $\mathcal{B}_\omega(R)$ (see \cite[Proposition 5.1(b),Theorem 6.2]{HW} and \cite[Proposition 3.7]{TH}).
\item  When $\omega$ is a dualizing module over a local Cohen-Macaulay ring $R$, $\mathcal{B}_\omega(R)$ actually is exactly the class of modules admitting finite
Gorenstein injective dimensions (see \cite[Corollary 2.6]{En}). However, the following example illustrates that these two classes of modules are different in general.
\end{enumerate}}
\end{Fa}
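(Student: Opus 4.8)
The first assertion, together with the first sentence of the second assertion, is quoted from the literature, so the only thing needing an argument is the example announced in the last sentence: a ring $R$ and a semidualizing bimodule ${}_R\omega_S$ for which $\mathcal{B}_\omega(R)$ does not coincide with the class $\{M\in\Mod R\mid \mathcal{GI}(R)\text{-}\id_R M<\infty\}$ of modules of finite Gorenstein injective dimension. The plan is to take the most economical semidualizing bimodule, namely $R=S=\Lambda$ an artin algebra and $\omega={}_\Lambda\Lambda_\Lambda$, the free bimodule of rank one, over a carefully chosen non-Gorenstein $\Lambda$.

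First I would verify that ${}_\Lambda\Lambda_\Lambda$ is semidualizing: (a1) and (a2) hold because $\Lambda$ is finitely generated projective over itself on either side; (b1) and (b2) are the evident homothety isomorphisms between $\Lambda$ and its own endomorphism rings (as a left, resp. right, module over itself); and (c1), (c2) hold since $\Ext^{\geqslant 1}_\Lambda(\Lambda,\Lambda)=0$. Then, since $(-)_*=\Hom_\Lambda(\Lambda,-)$ is naturally isomorphic to the identity functor and $\omega\otimes_\Lambda-=\Lambda\otimes_\Lambda-$ is likewise the identity, every $M\in\Mod\Lambda$ automatically satisfies (B1), (B2) and (B3) (for (B3), $\theta_M$ reduces, under the identification $M_*\cong M$, to the canonical isomorphism $\Lambda\otimes_\Lambda M\cong M$). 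Hence $\mathcal{B}_\omega(\Lambda)=\Mod\Lambda$, the whole module category.

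It then remains to choose $\Lambda$ so that some module fails to have finite Gorenstein injective dimension. I would take the commutative local algebra $\Lambda=k[x,y]/(x,y)^2$ over a field $k$: its socle is the maximal ideal $\mathfrak{m}=(x,y)$, which is two-dimensional over $k$, so $\Lambda$ is not Gorenstein --- and, in particular, $\omega=\Lambda$ is not a dualizing module over a Cohen--Macaulay local ring, in agreement with the first sentence of the second assertion. Invoking the known characterization that an artin algebra is Iwanaga--Gorenstein precisely when every module has finite Gorenstein injective dimension, we conclude that some $M\in\Mod\Lambda$ has $\mathcal{GI}(\Lambda)\text{-}\id_\Lambda M=\infty$; concretely one may take $M=k$, since the Gorenstein analogue of Bass' theorem forces $\Lambda$ to be Gorenstein as soon as $\mathcal{GI}(\Lambda)\text{-}\id_\Lambda k<\infty$. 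Thus $M\in\mathcal{B}_\omega(\Lambda)$ while $M$ is not of finite Gorenstein injective dimension, so the two classes are genuinely different in general.

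The step I expect to be the only real obstacle is the last one --- excluding $\mathcal{GI}(\Lambda)\text{-}\id_\Lambda k<\infty$; the cleanest route is to cite the Gorenstein analogue of Bass' theorem, equivalently the Enochs--Jenda characterization of Iwanaga--Gorenstein rings via finiteness of the Gorenstein injective dimension of all modules. An explicit analysis of the minimal injective resolution of $k$ over $k[x,y]/(x,y)^2$ would also do the job but is not needed; everything else in the construction is purely formal.
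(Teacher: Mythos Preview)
Your argument is correct, and it takes a genuinely different route from the paper's.  The paper's Example~2.8 exhibits a five-vertex quiver algebra $\Lambda$ with $(\rad\Lambda)^2=0$, a \emph{non-trivial} semidualizing module ${}_\Lambda\omega$ (built from indecomposables of the form $_1^2$, $^1{}_2{}^3$, etc.), and a specific module $M={}_3{}^4{}_5$; it then checks directly that $M\in\mathcal{B}_\omega(\Lambda)$ (citing \cite{Wa1} and \cite{TH}) while the Gorenstein injective dimension of $M$ is infinite by explicit computation.  Your approach instead takes the trivial semidualizing bimodule $\omega={}_\Lambda\Lambda_\Lambda$ over a commutative non-Gorenstein artin local ring, so that $\mathcal{B}_\omega(\Lambda)=\Mod\Lambda$ for formal reasons, and then appeals to the general fact that a non-Gorenstein artin algebra admits a module (indeed, its residue field) of infinite Gorenstein injective dimension.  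What you gain is simplicity and a conceptual explanation: the divergence of the two classes is visibly a consequence of the failure of Gorensteinness rather than of any subtlety in $\omega$.  What the paper's example buys is that it shows the phenomenon persists for a genuinely non-trivial $\omega$ (one that is neither $\Lambda$ nor a dualizing module), which is arguably closer in spirit to the Cohen--Macaulay setting referenced in the first sentence of part~(2).  Both examples are valid; yours is more elementary, the paper's more pointed.
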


\begin{exa} \label{exa: 2.8} {\rm Let $\Lambda$ be the finite-dimensional algebra over a field defined by the following quiver and relation:
\[\xymatrix{
 \circ 1 \ar@/^/[r]& \circ 2 \ar@/^/[l]& \circ 3\ar[l]&\circ 4\ar[l] \ar@/^/[r]&\circ 5, \ar@/^/[l]
  }\]
$$(\rad \Lambda)^2 = 0.$$
Take $_{\Lambda}\omega=_1^2$$\oplus$$^1$$_2$$^3\oplus$$3$$\oplus$$_3$$^4$$_5$$\oplus_4^5$ and $M= _3$$^4$$_5$.
Then by \cite[Example 3.1]{Wa1} and \cite[Theorem 3.9]{TH},
$_{\Lambda}\omega_{End(_{\Lambda}\omega)}$ is a semidualizing bimodule and $M\in\mathcal{B}_\omega(\Lambda)$.
But an easy computation shows that the Gorenstein injective dimension of $M$ is infinite.}
\end{exa}

Let $\mathcal{E}$ be a subcategory of an abelian category $\mathcal{A}$. Recall from
\cite{EJ} that a sequence:
$$\mathbb{S}: \cdots \to S_1 \to S_2 \to S_3 \to \cdots$$
in $\mathcal{A}$ is called {\bf
$\Hom_{\mathcal{A}}(\mathcal{E},-)$-exact} (resp. {\bf
$\Hom_{\mathcal{A}}(-,\mathcal{E})$-exact}) if {\bf
$\Hom_{\mathcal{A}}(E,\mathbb{S})$} (resp.
$\Hom_{\mathcal{A}}(\mathbb{S},E)$) is exact for any object $E$ in
$\mathcal{E}$. An epimorphism (resp. a monomorphism) in
$\mathcal{A}$ is called {\bf $\mathcal{E}$-proper} (resp. {\bf
$\mathcal{E}$-coproper}) if it is
$\Hom_{\mathcal{A}}(\mathcal{E},-)$-exact (resp.
$\Hom_{\mathcal{A}}(-,\mathcal{E})$-exact).

\begin{df} \label{df: 2.9} {\rm (\cite{H2}) Let $\mathcal{E}$ and $\mathcal{T}$ be
subcategories of an abelian category $\mathcal{A}$. Then $\mathcal{T}$ is called {\bf
$\mathcal{E}$-coresolving} in $\mathcal{A}$ if the following
conditions are satisfied.
\begin{enumerate}
\item $\mathcal{T}$ admits an {\bf $\mathcal{E}$-coproper cogenerator} $\mathcal{C}$, that is,
$\mathcal{C}\subseteq \mathcal{T}$, and for any object $T$ in $\mathcal{T}$, there exists a
$\Hom_{\mathcal{A}}(-,\mathcal{E})$-exact exact sequence
$0\to T\to C \to T^{'} \to 0$ in
$\mathcal{A}$ such that $C$ is an object in $\mathcal{C}$ and
$T^{'}$ is an object in $\mathcal{T}$.
\item $\mathcal{T}$ is {\bf closed under $\mathcal{E}$-coproper
extensions}, that is, for any
$\Hom_{\mathcal{A}}(-,\mathcal{E})$-exact exact sequence $0\to
A_1\to A_2 \to A_3 \to 0$ in $\mathcal{A}$, if both $A_1$ and $A_3$
are objects in $\mathcal{T}$, then $A_2$ is also an object in
$\mathcal{T}$.
\item $\mathcal{T}$ is {\bf closed under cokernels of
$\mathcal{E}$-coproper monomorphisms}, that is, for any
$\Hom_{\mathcal{A}}(-,\mathcal{E})$-exact exact sequence $0\to
A_1\to A_2 \to A_3 \to 0$ in $\mathcal{A}$, if both $A_1$ and $A_2$
are objects in $\mathcal{T}$, then $A_3$ is also an object in
$\mathcal{T}$.
\end{enumerate}

Dually, the notions of {\bf $\mathcal{E}$-proper generators} and {\bf $\mathcal{E}$-resolving
subcategories} are defined.}
\end{df}

\section {\bf Relative Homological Dimensions}

Holm and White obtained in \cite{HW} some equivalent characterizations of $\mathcal{B}_\omega(R)$ in terms of
the so-called $\omega$-projective and $\omega$-flat modules. Similar results were also proved by Enochs
and Holm in \cite{EH}. Recently, we proved in
\cite[Theorem 3.9]{TH} that $\mathcal{B}_\omega(R)={\rm c}\mathcal{T}(R)\cap {_R\omega^{\bot}}$. In the beginning of this section,
we investigate the further relation among ${\rm c}\mathcal{T}(R),{_R\omega^{\bot}}$ and $\mathcal{B}_\omega(R)$.

\begin{prop} \label{prop: 3.1}
\begin{enumerate}
\item[]
\item If $\pd_R\omega<\infty$, then ${\rm c}\mathcal{T}(R)\subseteq{_R\omega^{\perp}}$.
\item If $\pd_{S^{op}}\omega<\infty$, then ${_R\omega^{\perp}}\subseteq{\rm c}\mathcal{T}(R)$.
\end{enumerate}
\end{prop}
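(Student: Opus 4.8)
The plan is to prove both inclusions by exploiting the defining long exact sequence coming from the minimal injective resolution of $M$ together with the key fact that $\cTr_\omega M$ is, by construction, the cokernel of the map $(f^0)_*$ in the complex obtained by applying $(-)_* = \Hom_R(\omega,-)$ to (1.1). The first observation is that, since $\omega_S$ admits a degreewise finite $S$-projective resolution and $_R\omega$ similarly on the $R$-side, the functors $\omega\otimes_S-$ and $(-)_*$ interact with finite projective resolutions in the standard way; a finite projective dimension hypothesis on one side will let me bound the relevant $\Tor$ or $\Ext$ groups by a finite number of steps rather than worrying about convergence, which is exactly what the ``$\infty$'' in $\infty$-$\omega$-cotorsionfree is there to absorb.

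For part (1): assume $\pd_R\omega = d < \infty$ and take $M \in {\rm c}\mathcal{T}(R)$. Since every injective module $I$ lies in $_R\omega^{\perp}$ automatically (Ext vanishes into injectives), I would first argue that each cosyzygy $\coOmega^n(M)$ is controlled by the vanishing of $\Tor^S_i(\omega,\cTr_\omega M)$. Concretely, applying $(-)_*$ to (1.1) gives a complex of modules $I^i(M)_*$, and breaking off $\cTr_\omega M = \Coker(f^0)_*$ produces short exact sequences that, upon applying $\omega\otimes_S-$, relate $\Tor^S_i(\omega,\cTr_\omega M)$ to the homology of $\omega\otimes_S (I^{\bullet}(M))_*$. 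Using $\pd_R\omega<\infty$, one shows $\omega\otimes_S(I)_*\cong I$-type identifications hold for injectives (each $I^i(M)$ lies in $\mathcal{B}_\omega(R)$ since $\mathcal{B}_\omega(R)$ is coresolving and contains injectives), so $\Ext^{i}_R(\omega,M)$ gets identified with a shift of $\Tor^S_{\ast}(\omega,\cTr_\omega M)$, which vanishes for all $i\geqslant 1$ by hypothesis. Hence $M\in{_R\omega^{\perp}}$. The finiteness of $\pd_R\omega$ is used precisely to guarantee that these $\Tor$/$\Ext$ dictionary identifications terminate and that no correction terms survive.

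For part (2): assume $\pd_{S^{op}}\omega = d < \infty$ and take $M \in {_R\omega^{\perp}}$. Now I must produce the vanishing $\Tor^S_{1\leqslant i\leqslant n}(\omega,\cTr_\omega M) = 0$ for every $n$. Since $M\in{_R\omega^{\perp}}$, applying $(-)_*$ to (1.1) yields a long exact sequence and the homology in positive degrees of the $\omega_*$-complex is governed by $\Ext^{\geqslant 1}_R(\omega,M)=0$ together with $\Ext^{\geqslant 1}_R(\omega,I^i(M))=0$; this makes $0\to M_*\to I^0(M)_*\to I^1(M)_* \to\cdots$ exact, so $\cTr_\omega M$ sits in an exact sequence $0\to \coOmega^1(M)_* \to I^1(M)_* \to I^2(M)_* \to \cdots$ presenting $\cTr_\omega M$ as a high cosyzygy, in an appropriate sense, of modules of the form $I^i(M)_*$. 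Because $\pd_{S^{op}}\omega<\infty$, the modules $I^i(M)_*$ have bounded $\Tor$-dimension against $\omega$ (indeed $\Tor^S_{>d}(\omega, -)=0$ on everything, since $\omega$ has a length-$d$ projective resolution as a right $S$-module, giving $\fd_{S^{op}}\omega\leqslant d$), and one can dimension-shift: $\Tor^S_i(\omega,\cTr_\omega M)$ becomes $\Tor^S_{i+k}(\omega, \coOmega^{?}(M)_*)$ for arbitrarily large $k$, which is $0$ once $i+k>d$. This shifting argument forces the vanishing for every $i\geqslant 1$, so $M$ is $\infty$-$\omega$-cotorsionfree.

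The main obstacle will be setting up the homological bookkeeping correctly in part (2): one has to be careful that the relevant sequences obtained by applying $(-)_*$ remain exact (this needs $\Ext^{\geqslant1}_R(\omega,M)=0$ and the fact that injectives lie in $\mathcal{B}_\omega(R)$, so that the cosyzygies $\coOmega^n(M)$ stay in $_R\omega^{\perp}$ — this should follow since $_R\omega^{\perp}$ is closed under cosyzygies), and that the dimension shift genuinely lands $\cTr_\omega M$ as an arbitrarily-high syzygy-type object so that $\fd_{S^{op}}\omega\leqslant d$ can be brought to bear. I expect that, once these exactness statements are nailed down, both inclusions follow by routine dimension-shifting; the leverage in each direction is simply which one-sided projective dimension of $\omega$ is assumed finite.
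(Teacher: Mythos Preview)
Your plan for part~(2) is essentially sound and is in fact a bit more direct than the paper's argument. Once $M\in{_R\omega^{\perp}}$, all cosyzygies $\coOmega^i(M)$ lie in $_R\omega^{\perp}$, so the complex $I^\bullet(M)_*$ is exact and one checks that $\cTr_\omega M\cong\coOmega^2(M)_*$; then the dimension-shift $\Tor^S_j(\omega,\coOmega^i(M)_*)\cong\Tor^S_{j+k}(\omega,\coOmega^{i+k}(M)_*)$, together with $\fd_{S^{op}}\omega<\infty$, kills all $\Tor^S_{\geqslant 1}(\omega,\cTr_\omega M)$ directly. The paper establishes the same vanishing $\Tor^S_j(\omega,\coOmega^i(M)_*)=0$ but then, instead of invoking $\cTr_\omega M\cong\coOmega^2(M)_*$, uses it to prove $\theta_M$ is an isomorphism via the snake lemma, deduces $M$ is $2$-$\omega$-cotorsionfree, and iterates via \cite[Corollary~3.8]{TH} to build a proper $\Add_R\omega$-resolution. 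Your route is shorter.

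Part~(1), however, has a genuine gap: the asserted identification ``$\Ext^i_R(\omega,M)$ gets identified with a shift of $\Tor^S_*(\omega,\cTr_\omega M)$'' is false. If you compute carefully, the acyclic-resolution argument you sketch gives instead
\[
\Tor^S_1(\omega,\cTr_\omega M)\cong\Coker\theta_M,\quad
\Tor^S_2(\omega,\cTr_\omega M)\cong\Ker\theta_M,\quad
\Tor^S_{i+2}(\omega,\cTr_\omega M)\cong\Tor^S_i(\omega,M_*)\ (i\geqslant1),
\]
none of which is $\Ext^i_R(\omega,M)$. So the hypothesis $M\in{\rm c}\mathcal{T}(R)$ yields $\theta_M$ an isomorphism and $\Tor^S_{\geqslant1}(\omega,M_*)=0$, but gives no direct control over $\Ext^{\geqslant1}_R(\omega,M)$. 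You also never actually use $\pd_R\omega<\infty$: the identifications $\omega\otimes_S I_*\cong I$ for injective $I$ hold unconditionally because injectives lie in $\mathcal{B}_\omega(R)$. The paper's proof of~(1) takes a completely different route: it uses the characterization of ${\rm c}\mathcal{T}(R)$ from \cite[Proposition~3.7]{TH} as modules admitting a proper $\Add_R\omega$-resolution $\cdots\to W_1\to W_0\to M\to 0$, and then dimension-shifts on \emph{this} resolution: since each $W_i\in{_R\omega^{\perp}}$, one gets $\Ext^i_R(\omega,M)\cong\Ext^{i+d}_R(\omega,M_d)=0$ where $d=\pd_R\omega$ and $M_d$ is the $d$-th kernel. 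This is where the finiteness of $\pd_R\omega$ enters.
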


\begin{proof} (1) Let $M\in{\rm c}\mathcal{T}(R)$. Then by \cite[Proposition 3.7]{TH}, there exists an exact sequence:
$$\cdots \rightarrow W_{n}\rightarrow W_{n-1}\rightarrow \cdots \rightarrow W_0\rightarrow M\rightarrow 0$$
in $\Mod R$ with all $W_{i}\in\Add_R\omega$. Put $M_i=\Im(W_i\rightarrow W_{i-1})$ for any $i\geqslant 1$. We may assume $\pd_R\omega=n<\infty$
by assumption. Since $W_i\in {_R\omega^{\perp}}$ by \cite[Lemma 2.5(1)]{TH}, $\Ext_R^i(\omega,M)\cong\Ext_R^{i+n}(\omega,M_n)=0$
for any $i\geqslant 1$ and $M\in{_R\omega^{\perp}}$.

(2) Let $M\in {_R\omega^{\perp}}$ and $\pd_{S^{op}}\omega=n<\infty$. Then we get an exact sequence:
$$0\rightarrow \coOmega^{i}(M)_*\rightarrow  I^i(M)_*\rightarrow \coOmega^{i+1}(M)_*\rightarrow 0$$
in $\Mod S$ for any $i\geqslant 0$. Note that $\fd_{S^{op}}\omega=\pd_{S^{op}}\omega=n$ because $\omega$ is finitely presented as a right $S$-module.
Since $\Tor^S_{i\geqslant 1}(\omega,I_*)=0$ for any injective left $R$-module $I$ by \cite[Lemma 2.5(2)]{TH},
$\Tor^S_{j}(\omega,\coOmega^{i}(M)_*)
\cong\Tor^S_{j+n}(\omega,\coOmega^{i+n}(M)_*)=0$
for any $i\geqslant 0$ and $j\geqslant 1$; in particular, $\Tor^S_{1}(\omega,\coOmega^{2}(M)_*)=0$. Then we have the following diagram with exact rows:
$$\xymatrix{0 \ar[r]
& \omega\otimes_S\coOmega^{1}(M)_* \ar[r] \ar[d]^{\theta_{\coOmega^{1}(M)}}
& \omega\otimes_S{I^1(M)}_{*} \ar[d]^{\theta_{I^1(M)}}\\
0 \ar[r]&  \coOmega^{1}(M) \ar[r] & I^1(M). }$$
Because $\theta_{I^1(M)}$ is an isomorphism by \cite[Lemma 2.5(2)]{TH}, $\theta_{\coOmega^{1}(M)}$ is a monomorphism.
So $\coOmega^{1}(M)$ is 2-$\omega$-cotorsionfree by \cite[Lemma 4.1(1)]{TH}.
On the other hand, because $\Tor^S_{1}(\omega,\coOmega^{1}(M)_*)$ = 0 by the above argument, we have the following commutative diagram
with exact rows:
$$\xymatrix{0 \ar[r]&  \omega\otimes_SM_* \ar[r] \ar[d]^{\theta_{M}}
& \omega\otimes_S{I^0(M)}_{*} \ar[r] \ar[d]^{\theta_{I^0(M)}}& \omega\otimes_S\coOmega^{1}(M)_* \ar[d]^{\theta_{\coOmega^{1}(M)}}\ar[r]& 0\\
0 \ar[r]&  M \ar[r] & I^0(M) \ar[r] &\coOmega^{1}(M)\ar[r] & 0. }$$
Because $\theta_{I^0(M)}$ is an isomorphism by \cite[Lemma 2.5(2)]{TH}, applying the snake lemma we have that $\theta_M$
is also an isomorphism and $M$ is 2-$\omega$-cotorsionfree. So by \cite[Corollary 3.8]{TH}, there exists an exact sequence
$0\rightarrow M_1\rightarrow W_0\rightarrow M\rightarrow 0$ in $\Mod R$ with $W_0\in \Add_R\omega$ and $\Ext_R^1(\omega,M_1)=0$.
Thus $M_1\in {_R\omega^{\perp}}$ since $M\in {_R\omega^{\perp}}$. Then by a argument similar to the above,
we get an exact sequence
$0\rightarrow M_2\rightarrow W_1\rightarrow M_1\rightarrow 0$ in $\Mod R$ with $W_1\in \Add_R\omega$ and $M_2\in {_R\omega^{\perp}}$.
Continuing this procedure, we get a proper $\Add_R\omega$-resolution:
$$\cdots \rightarrow W_{n}\rightarrow W_{n-1}\rightarrow \cdots \rightarrow W_0\rightarrow M\rightarrow 0$$
of $M$ in $\Mod R$. Thus $M\in{\rm c}\mathcal{T}(R)$ by \cite[Proposition 3.7]{TH}.
\end{proof}

The following result extends \cite[Corollary 2.16]{Wa1}.

\begin{cor} \label{cor: 3.2}
\begin{enumerate}
\item[]
\item If $\pd_R\omega<\infty$, then $\mathcal{B}_\omega(R)={\rm c}\mathcal{T}(R)$.
\item If $\pd_{S^{op}}\omega<\infty$, then $\mathcal{B}_\omega(R)={_R\omega^{\perp}}$.
\end{enumerate}
\end{cor}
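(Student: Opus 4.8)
The plan is to derive Corollary~\ref{cor: 3.2} directly from the combination of Proposition~\ref{prop: 3.1} and the identity $\mathcal{B}_\omega(R)={\rm c}\mathcal{T}(R)\cap {_R\omega^{\bot}}$ established in \cite[Theorem 3.9]{TH}. This reduces the statement to an elementary set-theoretic manipulation once the inclusions of the Proposition are in hand, so there is essentially no obstacle beyond bookkeeping; the real content has already been done.

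For part (1), I would assume $\pd_R\omega<\infty$. By Proposition~\ref{prop: 3.1}(1) we have ${\rm c}\mathcal{T}(R)\subseteq {_R\omega^{\perp}}$, hence ${\rm c}\mathcal{T}(R)\cap{_R\omega^{\perp}}={\rm c}\mathcal{T}(R)$. Combining this with \cite[Theorem 3.9]{TH}, which gives $\mathcal{B}_\omega(R)={\rm c}\mathcal{T}(R)\cap{_R\omega^{\bot}}$, yields $\mathcal{B}_\omega(R)={\rm c}\mathcal{T}(R)$ immediately.

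For part (2), I would assume $\pd_{S^{op}}\omega<\infty$. By Proposition~\ref{prop: 3.1}(2) we have ${_R\omega^{\perp}}\subseteq {\rm c}\mathcal{T}(R)$, hence ${\rm c}\mathcal{T}(R)\cap{_R\omega^{\perp}}={_R\omega^{\perp}}$. Again invoking \cite[Theorem 3.9]{TH}, we conclude $\mathcal{B}_\omega(R)={_R\omega^{\perp}}$.

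The only point requiring any care is making sure the hypotheses line up correctly: part (1) uses the finiteness of $\pd_R\omega$ (the left-module projective dimension), which feeds Proposition~\ref{prop: 3.1}(1), while part (2) uses the finiteness of $\pd_{S^{op}}\omega$ (the right-module projective dimension), which feeds Proposition~\ref{prop: 3.1}(2); in the latter one also implicitly uses that $\omega$ is finitely presented as a right $S$-module so that $\fd_{S^{op}}\omega=\pd_{S^{op}}\omega$, but this is already absorbed into the proof of the Proposition. I expect the whole argument to be two or three lines.
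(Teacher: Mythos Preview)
Your proposal is correct and follows exactly the same approach as the paper, which simply states that the corollary is an immediate consequence of Proposition~\ref{prop: 3.1} and \cite[Theorem~3.9]{TH}. Your write-up just spells out the elementary set-theoretic step that the paper leaves implicit.
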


\begin{proof}
It is an immediate consequence of Proposition 3.1 and \cite[Theorem 3.9]{TH}.
\end{proof}

We write $\Ker\Ext^{i\geqslant 1}_S(-,\omega^+)=\{N\in\Mod S\mid\Ext^{i\geqslant 1}_S(N, \omega^+)=0$\} and
$\mathcal{H}(\omega)=\Adst(\omega)\bigcap\Ker\Ext^{i\geqslant 1}_S(-,\omega^+)$,
where $(-)^+=\Hom_\mathbb{Z}(-,\mathbb{Q}/\mathbb{Z})$
with $\mathbb{Z}$ the additive group of integers and $\mathbb{Q}$ the additive group of rational numbers.
In the following result, we provide a viewpoint from Morita equivalence for ${\rm c}\mathcal{T}(R)$.

\begin{thm} \label{thm: 3.3} There exists an equivalence of
categories: \\
\[\begin{array}{cccc}
\xymatrix{ {\rm c}\mathcal{T}(R) \ar@<1ex>[rr]^{(-)_*} &&
\mathcal{H}(\omega).\ar@<1ex>[ll]^{{\omega\otimes_S-}}_{\thicksim}}
\end{array}
\]
\end{thm}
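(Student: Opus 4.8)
The plan is to establish the two functors are mutually inverse equivalences by identifying what $\mathcal{H}(\omega)$ must be. We know from \cite[Theorem 3.9]{TH} that $\mathcal{B}_\omega(R) = {\rm c}\mathcal{T}(R)\cap{_R\omega^\perp}$, and we know (Fact \ref{fa: 2.7}(1)) that on $\mathcal{B}_\omega(R)$ the functor $(-)_*=\Hom_R(\omega,-)$ restricts to an equivalence onto $\mathcal{A}_\omega(S)$ with quasi-inverse $\omega\otimes_S-$ (this is the standard Foxby equivalence, and it is implicit in the cited Holm--White results). So the heart of the matter is to pass from $\mathcal{B}_\omega(R)$ to the strictly larger class ${\rm c}\mathcal{T}(R)$ by dropping the hypothesis $M\in{_R\omega^\perp}$, and correspondingly to enlarge $\mathcal{A}_\omega(S)$ on the other side; the claim is that the correct enlargement is exactly $\mathcal{H}(\omega)=\Adst(\omega)\cap\Ker\Ext^{\geqslant 1}_S(-,\omega^+)$.

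First I would check that the two functors land in the asserted categories. Given $M\in{\rm c}\mathcal{T}(R)$, use the proper $\Add_R\omega$-resolution $\cdots\to W_1\to W_0\to M\to 0$ furnished by \cite[Proposition 3.7]{TH}; applying $(-)_*$ to it and using \cite[Lemma 2.5]{TH} (which gives $\Tor^S_{\geqslant 1}(\omega, I_*) = 0$ and $\theta_I$ an isomorphism for $I$ injective, hence the analogous facts for $W\in\Add_R\omega$) one sees the complex $\cdots\to (W_1)_*\to (W_0)_*\to M_*\to 0$ is exact with each $(W_i)_*\in\Add_S\omega_*$, which is a projective-like class; I would use this together with the dimension-shifting already carried out in the proof of Proposition \ref{prop: 3.1} to verify $M_*\in\Adst(\omega)$ (i.e. $\mu_{M_*}$ is iso) and $\Ext^{\geqslant 1}_S(M_*,\omega^+)=0$. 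The vanishing of $\Ext_S(M_*,\omega^+)$ should come from $\Tor$-$\Ext$ adjunction $\Ext^i_S(M_*,\omega^+)\cong\Tor^S_i(\omega,M_*)^+$ together with $\Tor^S_{\geqslant 1}(\omega,M_*)=0$, which in turn follows because $M_*$ has a resolution by modules in $\Add_S\omega_*$ on which $\omega\otimes_S-$ is exact. Conversely, given $N\in\mathcal{H}(\omega)$, the vanishing $\Ext^{\geqslant 1}_S(N,\omega^+)=0$ gives $\Tor^S_{\geqslant 1}(\omega,N)=0$, so $\omega\otimes_S N$ makes sense homologically; then I would build a proper $\Add_R\omega$-resolution of $\omega\otimes_S N$ by lifting a projective resolution of $N$ through $\omega\otimes_S-$ and invoking $\mu_N$ iso to recover exactness after $(-)_*$, concluding $\omega\otimes_S N\in{\rm c}\mathcal{T}(R)$ via \cite[Proposition 3.7]{TH} again.

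Finally I would verify the unit and counit are isomorphisms: for $M\in{\rm c}\mathcal{T}(R)$ that $\theta_M\colon \omega\otimes_S M_*\to M$ is an isomorphism, and for $N\in\mathcal{H}(\omega)$ that $\mu_N$ is an isomorphism — the latter being immediate since $N\in\Adst(\omega)$ by definition of $\mathcal{H}(\omega)$. For the former, $\theta_M$ iso is precisely the statement that $M$ is $2$-$\omega$-cotorsionfree ($=\omega$-static, by \cite[Proposition 3.2]{TH}), which holds a fortiori for $M\in{\rm c}\mathcal{T}(R)$; one checks naturality of $\theta$ and $\mu$ and that they are the counit and unit of the adjoint pair $(\omega\otimes_S-,(-)_*)$. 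The main obstacle I anticipate is the back direction of the containment statements — showing that $N\in\mathcal{H}(\omega)$ really forces $\omega\otimes_S N\in{\rm c}\mathcal{T}(R)$, i.e. producing the full proper $\Add_R\omega$-resolution rather than just static-ness; this requires iterating the one-step construction while propagating both the $\Adst$ and the $\Ext(-,\omega^+)$-vanishing conditions through each syzygy, mirroring the inductive "continuing this procedure" argument in the proof of Proposition \ref{prop: 3.1}, and it is where the precise choice of the defining conditions of $\mathcal{H}(\omega)$ is used in an essential way.
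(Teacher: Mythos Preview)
Your proposal is correct and follows essentially the same arc as the paper's proof, but you are working harder than necessary in two places.

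First, the paper begins by invoking \cite[2.4]{W}, which already gives that $(-)_*$ and $\omega\otimes_S-$ restrict to an equivalence between the class of $2$-$\omega$-cotorsionfree modules and $\Adst(\omega)$. This single citation disposes of the unit/counit verification ($\theta_M$ and $\mu_N$ isomorphisms) and the fact that $M_*\in\Adst(\omega)$ whenever $M\in{\rm c}\mathcal{T}(R)$, which you propose to extract separately by dimension-shifting and the triangle identities. With Wisbauer's result in hand, the task reduces purely to checking that $(-)_*$ carries ${\rm c}\mathcal{T}(R)$ into $\Ker\Ext^{\geqslant 1}_S(-,\omega^+)$ and that $\omega\otimes_S-$ carries $\mathcal{H}(\omega)$ into ${\rm c}\mathcal{T}(R)$.

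Second, for the back direction your anticipated ``main obstacle'' dissolves. Your own approach~1 (lift a projective resolution of $N$ through $\omega\otimes_S-$) already works in one stroke: $\Tor^S_{\geqslant 1}(\omega,N)=0$ makes the lifted sequence exact, and $\mu_N$ iso makes $(-)_*$ recover the original projective resolution, so properness is automatic. No syzygy-by-syzygy iteration is needed. The paper is even terser: it observes that $\omega\otimes_S N$ is $2$-$\omega$-cotorsionfree (by \cite[2.4]{W}) and that $\Tor^S_{\geqslant 1}(\omega,(\omega\otimes_S N)_*)\cong\Tor^S_{\geqslant 1}(\omega,N)=0$, and then cites \cite[Corollary 3.4]{TH}, which says exactly that these two conditions together force $\infty$-$\omega$-cotorsionfree. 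Your forward direction (the $\Tor$--$\Ext$ duality $\Ext^i_S(M_*,\omega^+)\cong[\Tor^S_i(\omega,M_*)]^+$ together with the proper $\Add_R\omega$-resolution) matches the paper's argument exactly.
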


\begin{proof} According to \cite[2.4]{W}, the functors $(-)_*$ and $\omega\otimes_S-$ induce an equivalence between the category of all
2-$\omega$-cotorsionfree modules and $\Adst(\omega)$. So it suffices to show that $(-)_*$ (resp. ${\omega\otimes_S-}$)
maps ${\rm c}\mathcal{T}(R)$ (resp. $\mathcal{H}(\omega)$) to $\mathcal{H}(\omega)$ (resp. ${\rm c}\mathcal{T}(R)$).

Let $M\in {\rm c}\mathcal{T}(R)$. Then by \cite[2.4]{W}, we have $M_*\in \Adst(\omega)$. By \cite[Proposition 3.7]{TH}
there exists a proper $\Add_R\omega$-resolution:
$$\cdots \rightarrow W_{n}\rightarrow W_{n-1}\rightarrow \cdots \rightarrow W_0\rightarrow M\rightarrow 0  \eqno(3.1)$$
of $M$ in $\Mod R$. Thus we get an exact sequence:
$$\cdots \rightarrow {W_{n}}_{*}\rightarrow {W_{n-1}}_{*}\rightarrow \cdots \rightarrow {W_{0}}_{*}\rightarrow M_{*}\rightarrow 0$$
in $\Mod S$. Applying $\omega\otimes_S-$ to this exact sequence gives back the sequence (3.1). Then we get easily that
$\Tor^S_{i\geqslant 1}(\omega,M_*)$ = 0 because $\Tor^S_{i\geqslant 1}(\omega,{W_j}_*)=0$ for any $j\geqslant 0$ by \cite[Lemma 2.5(2)]{TH}.
It follows from the mixed isomorphism theorem that $\Ext_S^{i\geqslant 1}(M_*,\omega^+)\cong[\Tor_{i\geqslant 1}^S(\omega,M_*)]^+=0$.
So $M_*\in\Ker\Ext^{i\geqslant 1}_S(-,\omega^+)$ and $M_*\in \mathcal{H}(\omega)$.

Conversely, let $N\in \mathcal{H}(\omega)$. Then $(\omega\otimes_SN)_*\cong N$. It follows from the mixed isomorphism theorem that
$[\Tor_{i\geqslant 1}^S(\omega,(\omega\otimes_SN)_*)]^+\cong[\Tor_{i\geqslant 1}^S(\omega,N)]^+
\cong\Ext_S^{i\geqslant 1}(N,\omega^+)$ $=0$ and $\Tor_{i\geqslant 1}^S(\omega,(\omega\otimes_SN)_*)=0$. In addition,
$\omega\otimes_SN$ is 2-$\omega$-cotorsionfree by \cite[2.4]{W}.
Thus we conclude that $\omega\otimes_SN$ is $\infty$-$\omega$-cotorsionfree by \cite[Corollary 3.4]{TH}.
\end{proof}

Following \cite{HW}, set
$$\mathcal{F}_\omega(R)=\{\omega\otimes_SF\mid F\ {\rm \ is\ flat\ in}\ \Mod S\},$$
$$\mathcal{P}_\omega(R)=\{\omega\otimes_SP\mid P\ {\rm \ is\ projective\ in}\ \Mod S\},$$
$$\mathcal{I}_\omega(S)=\{\Hom_R(\omega,I)\mid I\ {\rm \ is\ injective\ in}\ \Mod R\}.$$
The modules in $\mathcal{F}_\omega(R)$, $\mathcal{P}_\omega(R)$ and $\mathcal{I}_\omega(S)$ are called \textbf{$\omega$-flat},
\textbf{$\omega$-projective} and \textbf{$\omega$-injective} respectively. For a module $M\in\Mod R$, we
use $\limit_M(R)$ to denote the subcategory of $\Mod R$ consisting of all modules isomorphic to direct summands of a
direct limit of a family modules in which each is a finite direct
sum of copies of $M$.

\begin{prop} \label{prop: 3.4}
\begin{enumerate}
\item[]
\item$\mathcal{F}_{\omega}(R)=\limit_{\omega}(R)$
\item$\mathcal{P}_{\omega}(R)=\Add_{R}\omega$.
\item$\mathcal{I}_{\omega}(S)=\Prod_SE_*$ with $_RE$ an injective cogenerator for $\Mod R$.
\end{enumerate}
\end{prop}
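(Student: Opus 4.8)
The plan is to prove the three identities separately, establishing both inclusions in each case. Throughout I would use that $\omega$ admits a degreewise finite projective resolution on either side, so that $\Hom_R(\omega,-)$ commutes with arbitrary direct sums and with direct limits while $\Ext^i_R(\omega,-)$ and $\Tor^S_i(\omega,-)$ commute with direct limits; I would also use freely the Bass/Auslander-class formalism of Definition 2.4 and Fact 2.7 together with the Foxby equivalence between $\mathcal{B}_\omega(R)$ and $\mathcal{A}_\omega(S)$ afforded by $\Hom_R(\omega,-)$ and $\omega\otimes_S-$ (\cite{HW}). For (2), the inclusion $\mathcal{P}_\omega(R)\subseteq\Add_R\omega$ is immediate: a projective $P\in\Mod S$ is a direct summand of a free module $S^{(I)}$, so $\omega\otimes_SP$ is a direct summand of $\omega\otimes_SS^{(I)}\cong\omega^{(I)}$. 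For the reverse inclusion, take $M\in\Add_R\omega$; since $\Add_R\omega\subseteq\mathcal{B}_\omega(R)$ by Fact 2.7(1), the map $\theta_M$ is an isomorphism and $M\cong\omega\otimes_SM_*$. It then remains to check that $M_*$ is projective over $S$: writing $M$ as a direct summand of $\omega^{(I)}$ and using finite presentation of $\omega$ together with the homothety isomorphism $\Hom_R(\omega,\omega)\cong S$, we see that $M_*$ is a direct summand of $\Hom_R(\omega,\omega^{(I)})\cong S^{(I)}$, hence projective.

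For (1), I would first record that $\mathcal{F}_\omega(R)$ and $\limit_\omega(R)$ both lie in $\mathcal{B}_\omega(R)$ and are closed under direct summands, and that $\mathcal{B}_\omega(R)$ is closed under direct limits --- each of (B1), (B2), (B3) being preserved under direct limits, the first two because $\omega$ admits a degreewise finite projective resolution, the third because a direct limit of isomorphisms is an isomorphism. Given $F$ flat over $S$, Lazard's theorem writes $F\cong\lim F_\lambda$ with every $F_\lambda$ finitely generated free, hence $\omega\otimes_SF\cong\lim(\omega\otimes_SF_\lambda)$ is a direct limit of finite direct sums of copies of $\omega$; thus $\mathcal{F}_\omega(R)\subseteq\limit_\omega(R)$. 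Conversely, if $X$ is a direct summand of $Y=\lim_\lambda\omega^{n_\lambda}$, then $Y\in\mathcal{B}_\omega(R)$, so $X\in\mathcal{B}_\omega(R)$ and $X\cong\omega\otimes_SX_*$; and $X_*$ is a direct summand of $Y_*\cong\lim\Hom_R(\omega,\omega^{n_\lambda})\cong\lim S^{n_\lambda}$, a direct limit of finitely generated free $S$-modules, hence flat, so $X_*$ is flat and $X\in\mathcal{F}_\omega(R)$.

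For (3), I would show that $\mathcal{I}_\omega(S)$ is closed under direct products and under direct summands. Closure under products is immediate from $\prod_\alpha\Hom_R(\omega,I_\alpha)\cong\Hom_R(\omega,\prod_\alpha I_\alpha)$ and the fact that a product of injective modules is injective. For summands, suppose $X$ is a direct summand of $\Hom_R(\omega,I)$ with $_RI$ injective: applying $\omega\otimes_S-$ and using that $\theta_I$ is an isomorphism (injectives belong to $\mathcal{B}_\omega(R)$) shows $\omega\otimes_SX$ is a direct summand of $I$, hence injective, while $\Hom_R(\omega,I)\in\mathcal{A}_\omega(S)$ forces $X\in\mathcal{A}_\omega(S)$, so $\mu_X$ is an isomorphism and $X\cong\Hom_R(\omega,\omega\otimes_SX)\in\mathcal{I}_\omega(S)$. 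Now if $_RE$ is an injective cogenerator for $\Mod R$, then $E_*\in\mathcal{I}_\omega(S)$, and the two closure properties give $\Prod_SE_*\subseteq\mathcal{I}_\omega(S)$; conversely, for $_RI$ injective the cogenerator property supplies a split monomorphism $I\hookrightarrow E^{J}$, so $I_*$ is a direct summand of $(E^{J})_*\cong(E_*)^{J}$, i.e.\ $I_*\in\Prod_SE_*$.

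The step I expect to be the main obstacle is part (1): one must establish that $\mathcal{B}_\omega(R)$ is closed under direct limits (not among the conditions recorded explicitly above, but provable as indicated) and carefully pass $\Hom_R(\omega,-)$ through the relevant direct limits, which is where the degreewise finiteness of a projective resolution of $\omega$ is indispensable. The auxiliary facts that every flat $S$-module lies in $\mathcal{A}_\omega(S)$ and that $\Hom_R(\omega,I)\in\mathcal{A}_\omega(S)$ for injective $_RI$ are standard consequences of the Foxby equivalence (\cite{HW}) and would be invoked rather than reproved.
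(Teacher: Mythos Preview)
Your proposal is correct and, for part (1), follows essentially the same line as the paper: show $\mathcal{F}_\omega(R)\subseteq\limit_\omega(R)$ via the commutation of $\omega\otimes_S-$ with direct limits, and for the reverse use that any $M\in\limit_\omega(R)$ lies in $\mathcal{B}_\omega(R)$ (the paper invokes \cite[Proposition~4.2(a)]{HW} for this, whereas you verify closure of $\mathcal{B}_\omega(R)$ under direct limits by hand) and then pass $\Hom_R(\omega,-)$ through the direct limit to conclude $M_*$ is flat. For parts (2) and (3) the paper simply cites \cite[Proposition~2.4]{LHX}, while you supply direct arguments via the Foxby equivalence; your proofs are fine and are in fact the natural unpacking of what that reference contains, so there is no genuine difference in strategy.
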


\begin{proof} (1) It is well known that a module in $\Mod S$ is flat if and only if it is in $\limit_S(S)$.
Because the functor $\omega\otimes_S-$ commutes with direct limits, we get easily
$\mathcal{F}_{\omega}(R)\subseteq\limit_{\omega}(R)$. Now let $M\in\limit_{\omega}(R)$. Then $M\in \mathcal{B}_\omega(R)$
by \cite[Proposition 4.2(a)]{HW}. Because $_R\omega$ admits a degreewise finite
$R$-projective resolution, $\Hom_R(\omega,-)$ commutes with direct limits. So $\Hom_R(\omega,M)$ is in $\limit_S(S)$,
that is, $\Hom_R(\omega,M)$ is a flat left $S$-module. Then by \cite[Lemma 5.1(a)]{HW}, we have $M\in \mathcal{F}_\omega(R)$,
and thus $\limit_{\omega}(R)\subseteq\mathcal{F}_{\omega}(R)$.

(2) and (3) See \cite[Proposition 2.4]{LHX}.
\end{proof}

The following result establishes the relation between the relative homological dimensions of a module $M$
and the corresponding standard homological dimensions of $M_*$. It extends \cite[Theorem 2.11]{TW}.

\begin{thm} \label{thm: 3.5}
\begin{enumerate}
\item[]
\item $\fd_SM_*\leqslant\mathcal{F}_\omega(R)$-$\pd_RM$ for any $M\in\Mod R$, the equality holds if $M\in{\rm c}\mathcal{T}(R)$.
\item$\pd_SM_*\leqslant\mathcal{P}_\omega(R)$-$\pd_RM$ for any $M\in\Mod R$, the equality holds if $M\in{\rm c}\mathcal{T}(R)$.
\item$\id_R\omega\otimes_SN\leqslant\mathcal{I}_\omega(S)$-$\id_SN$ for any $N\in\Mod S$, the equality holds if $N\in\mathcal{A}_\omega(S)$.
\end{enumerate}
\end{thm}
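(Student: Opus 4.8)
The plan is to prove all three parts by one device: transport a relative (co)resolution across the adjoint pair $(\omega\otimes_S-,\ (-)_*)$, read off an ordinary (co)resolution of $M_*$ (resp.\ of $\omega\otimes_SN$) from it, and then run the construction in reverse under the extra hypothesis $M\in{\rm c}\mathcal{T}(R)$ (resp.\ $N\in\mathcal{A}_\omega(S)$). The facts that make this work are all available: $\omega\otimes_SF\in\mathcal{B}_\omega(R)\subseteq{_R\omega^{\perp}}$ and $(\omega\otimes_SF)_*\cong F$ for $F$ flat (hence the same for $F$ projective, with $\omega\otimes_SP\in\mathcal{P}_\omega(R)=\Add_R\omega$ by Proposition~3.4(2) and \cite[Lemma~2.5(1)]{TH}); $\Tor_{\geqslant1}^S(\omega,\Hom_R(\omega,I))=0$ and $\omega\otimes_S\Hom_R(\omega,I)\cong I$ for $I$ injective, by \cite[Lemma~2.5(2)]{TH}; the exactness of $(-)_*$ on a short exact sequence whose kernel lies in ${_R\omega^{\perp}}$; and the exactness of $\omega\otimes_S-$ on a short exact sequence whose kernel has vanishing higher $\Tor$ against $\omega$.

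For the inequality in (1), assume $n:=\mathcal{F}_\omega(R)$-$\pd_RM<\infty$ and choose an $\mathcal{F}_\omega(R)$-resolution $0\to\omega\otimes_SF_n\to\cdots\to\omega\otimes_SF_0\to M\to0$ with every $F_i$ flat in $\Mod S$. Splitting into short exact sequences and running the long exact sequence of $\Ext_R(\omega,-)$ downward from $\omega\otimes_SF_n\in{_R\omega^{\perp}}$, every syzygy lands in ${_R\omega^{\perp}}$, so $(-)_*$ keeps the resolution exact; since $(\omega\otimes_SF_i)_*\cong F_i$, this is a flat resolution of $M_*$ of length $n$, giving $\fd_SM_*\leqslant n$. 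For the reverse inequality when $M\in{\rm c}\mathcal{T}(R)$, take a flat resolution $0\to F_m\to\cdots\to F_0\to M_*\to0$ with $m=\fd_SM_*$ and apply $\omega\otimes_S-$: since $M\in{\rm c}\mathcal{T}(R)$ forces $\Tor_{\geqslant1}^S(\omega,M_*)=0$ (this comes out of the proof of Theorem~3.3, via $M_*\in\mathcal{H}(\omega)$ and the mixed isomorphism theorem), the long exact $\Tor$-sequences give $\Tor_{\geqslant1}^S(\omega,-)=0$ on every syzygy of $M_*$, so $\omega\otimes_S-$ is exact on the resolution; as $\theta_M\colon\omega\otimes_SM_*\to M$ is an isomorphism ($M$ being $2$-$\omega$-cotorsionfree), the outcome is an $\mathcal{F}_\omega(R)$-resolution of $M$ of length $m$, whence $\mathcal{F}_\omega(R)$-$\pd_RM\leqslant m$.

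Part (2) is the same argument verbatim with ``flat'' replaced by ``projective'' throughout. Part (3) is the formal dual: for the inequality, given an $\mathcal{I}_\omega(S)$-coresolution $0\to N\to\Hom_R(\omega,I^0)\to\cdots\to\Hom_R(\omega,I^n)\to0$ with each $I^j$ injective in $\Mod R$, propagate $\Tor_{\geqslant1}^S(\omega,-)=0$ downward along its cosyzygies starting from $\Hom_R(\omega,I^n)$, apply $\omega\otimes_S-$ to obtain an exact sequence, and use $\omega\otimes_S\Hom_R(\omega,I^j)\cong I^j$ to recognize it as an injective coresolution of $\omega\otimes_SN$ of length $n$, so $\id_R(\omega\otimes_SN)\leqslant n$; for equality when $N\in\mathcal{A}_\omega(S)$, take a minimal injective coresolution of $\omega\otimes_SN$ of length $m=\id_R(\omega\otimes_SN)$, note $\omega\otimes_SN\in{_R\omega^{\perp}}$ by condition (A2), propagate this upward along the cosyzygies, apply $(-)_*$ (now exact), and use that $\mu_N\colon N\to(\omega\otimes_SN)_*$ is an isomorphism to extract an $\mathcal{I}_\omega(S)$-coresolution of $N$ of length $m$.

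The calculations reduce to routine long exact sequence chases; the only delicate point is checking, at each step of each induction, that the relevant syzygy lies in ${_R\omega^{\perp}}$ (resp.\ that the relevant cosyzygy has vanishing higher $\Tor$ against $\omega$) --- this is where the long exact sequences and the standing hypotheses enter. In particular, the implication ``$M\in{\rm c}\mathcal{T}(R)\Rightarrow\Tor_{\geqslant1}^S(\omega,M_*)=0$'' from Theorem~3.3 is precisely what lets the equalities in (1) and (2) go through without assuming $M\in{_R\omega^{\perp}}$, and dually condition (A2) is what powers the equality in (3).
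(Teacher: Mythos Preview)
Your proposal is correct and follows essentially the same approach as the paper: transport a finite relative (co)resolution across the adjoint pair $(\omega\otimes_S-,\,(-)_*)$ to obtain an ordinary (co)resolution, and run the construction in reverse under the extra hypothesis. The only minor differences are in the justifications of auxiliary facts (you invoke $\mathcal{B}_\omega(R)$ and syzygy chases directly, whereas the paper appeals to commutation of $\Ext_R^i(\omega,-)$ and $\Tor_j^S(\omega,-)$ with direct limits and products via the degreewise finite presentations of $_R\omega$ and $\omega_S$; and you cite Theorem~3.3 rather than \cite[Corollary~3.4(3)]{TH} for $\Tor_{\geqslant1}^S(\omega,M_*)=0$), but these are stylistic rather than substantive.
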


\begin{proof}
(1) Let $M\in \Mod R$ with $\mathcal{F}_\omega(R)$-$\pd_RM=n <\infty$. Then there
exists an exact sequence:
$$0 \to L_{n} \to \cdots \to L_{1} \to L_{0} \to M \to 0 \eqno{(3.2)}$$
in $\Mod R$ with all $L_{i}$ in $\limit_\omega(R)$ by Proposition 3.4(1). Because $_R\omega$ admits a degreewise finite
$R$-projective resolution, $\Ext_R^i(\omega,-)$ commutes with direct limits for any $i\geqslant 0$.
Also notice that $(_R\omega)_*\cong S$ and $\omega\in{_R\omega^{\bot}}$, so we have that ${L_{i}}_*$
is in $\limit_S(S)$ (that is, ${L_{i}}_*$ is left $S$-flat) and $L_i\in{_R\omega^{\bot}}$
for any $0 \leqslant i \leqslant n$. Applying the functor $\Hom_{R}(\omega,-)$
to the exact sequence (3.2) we obtain the following exact sequence:
$$0 \to {L_n}_* \to \cdots \to {L_1}_* \to {L_0}_* \to M_* \to 0$$
in $\Mod S$, and so $\fd_S M_*\leqslant n$.

(2) Let $M\in \Mod R$ with $\mathcal{P}_\omega(R)$-$\pd_RM=n<\infty$. Then there exists an exact sequence:
$$0 \to \omega_{n} \to \cdots \to \omega_{1} \to \omega_{0} \to M \to 0 \eqno{(3.3)}$$
in $\Mod R$ with all $\omega_{i}\in\Add_R\omega$ by Proposition 3.4(2). Because
all ${\omega_i}_*$ are projective left $S$-modules and $\Add_R\omega\subseteq{_R\omega^{\perp}}$ by \cite[Lemma 2.5(1)]{TH},
applying the functor $(-)_*$ to the exact
sequence (3.3) we get the following exact sequence:
$$0 \to {\omega_n}_* \to \cdots \to {\omega_{1}}_* \to {\omega_{0}}_* \to {M_*} \to 0$$ in $\Mod S$,
and so $\pd_SM_*\leqslant n$.

Now suppose $M\in{\rm c}\mathcal{T}(R)$. Then $\omega\otimes _{S} M_*\cong M$.
By \cite[Corollary 3.4(3)]{TH}, we have $\Tor_{i\geqslant 1}^{S}(\omega, M_*)=0$.
We will prove the equalities in (1) and (2) hold.

(1) Assume $\fd_SM_*=n<\infty$. Then there exists an exact sequence:
$$0 \to F_{n} \to \cdots \to F_{1} \to F_{0} \to M_*\to 0$$
in $\Mod S$ with all $F_{i}$ flat. Applying the functor $\omega\otimes _{S}-$
to it we get an exact sequence:
$$0 \to \omega \otimes _{S}F_{n} \to \cdots \to \omega \otimes _{S}F_{1} \to \omega \otimes _{S}F_{0}
\to \omega\otimes _{S} M_* (\cong M)\to 0$$
in $\Mod R$ with all $\omega \otimes _{S}F_{i}$ in $\mathcal{F}_\omega(R)$, so we have $\mathcal{F}_\omega(R)$-$\pd_RM\leqslant n$.

(2) Assume $\pd_SM_*=n<\infty$. Then there exists an exact sequence:
$$0 \to P_n \to \cdots \to P_1 \to P_0\to M_* \to 0$$
in $\Mod S$ with all $P_i$ projective. Applying the functor $\omega\otimes _{S}-$
to it we get an exact sequence:
$$0 \to \omega \otimes _{S}P_{n} \to \cdots \to \omega \otimes _{S}P_{1} \to \omega \otimes _{S}P_{0}
\to \omega\otimes _{S} M_*(\cong M) \to 0$$
in $\Mod R$ with all $\omega \otimes _{S}P_{i}$ in $\mathcal{P}_\omega(R)$, and so $\mathcal{P}_\omega(R)$-$\pd_RM\leqslant n$.

(3) Let $N\in\Mod S$ with $\mathcal{I}_\omega(S)$-$\id_SN=n<\infty$ and $_RE$ be an injective cogenerator
for $\Mod R$. Then there exists an exact sequence:
$$0 \to N \to I^0 \to I^1 \to\cdots \to I^n \to 0 \eqno{(3.4)}$$
in $\Mod S$ with all $I^{i}$ in $\Prod_SE_*$ by Proposition 3.4(3). Because $\omega_S$ admits a degreewise finite
$S$-projective resolution, $\Tor_j^S(\omega,-)$ commutes with direct products for any $j\geqslant 0$. Then by \cite[Lemma 2.5(2)]{TH},
$\omega\otimes_SI^i(\in\Prod_RE)$ is injective in $\Mod R$ and $\Tor_{j\geqslant 1}^S(\omega,I^i)=0$ for any $0\leqslant i\leqslant n$.
Applying the functor $\omega\otimes_S-$ to the exact sequence (3.4) we obtain the following exact sequence:
$$0 \to \omega\otimes_SN \to \omega\otimes_SI^0 \to \omega\otimes_SI^1 \to\cdots \to \omega\otimes_SI^n \to 0$$
in $\Mod R$, and so $\id_R\omega\otimes_SN\leqslant n$.

Now suppose $N\in\mathcal{A}_\omega(S)$. Then $N\cong (\omega\otimes _{S}N)_*$ and $\omega\otimes _{S}N\in{_R\omega^{\bot}}$.
If $\id_R\omega\otimes_SN=n<\infty$, then there exists an exact sequence:
$$0 \to \omega\otimes_SN \to E^0 \to E^1 \to\cdots \to E^n \to 0$$
in $\Mod R$ with all $E^i$ injective. Applying the functor $\Hom_R(\omega,-)$
to it we get an exact sequence:
$$0 \to (\omega\otimes_SN)_*(\cong N) \to {E^0}_* \to {E^1}_* \to\cdots \to {E^n}_* \to 0$$
in $\Mod S$ with all ${E^i}_*\in \mathcal{I}_\omega(S)$, and so $\mathcal{I}_\omega(S)$-$\id_SN\leqslant n$.
\end{proof}

For a subclass $\mathcal{X}$ of $\Mod R$, we write $\id_R\mathcal{X}:=\sup\{\id_RX\mid X\in \mathcal{X}\}$. As an application
of Theorem 3.5, we get the following

\begin{prop} \label{prop: 3.6}
\begin{enumerate}
\item[]
\item $\sup\{\mathcal{F}_\omega(R)$-$\pd_RM\mid M\in {\rm c}\mathcal{T}(R)\ with \ \mathcal{F}_\omega(R)$-$\pd_RM<\infty\}\leqslant\id_R\mathcal{F}_\omega(R)$.
\item $\sup\{\mathcal{P}_\omega(R)$-$\pd_RM\mid M\in {\rm c}\mathcal{T}(R)\ with \ \mathcal{P}_\omega(R)$-$\pd_RM<\infty\}\leqslant\id_R\mathcal{P}_\omega(R)$.
\end{enumerate}
\end{prop}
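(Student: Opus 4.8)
The plan is to prove both inequalities by a single scheme; part~(2) is the transparent case, so I carry it out and then indicate part~(1). We may assume the relevant injective dimension is finite, otherwise there is nothing to prove. Fix $M\in{\rm c}\mathcal{T}(R)$ with $n:=\mathcal{P}_\omega(R)$-$\pd_RM<\infty$ (resp.\ $\mathcal{F}_\omega(R)$-$\pd_RM<\infty$). The first step is to observe that $M$ then already lies in $\mathcal{B}_\omega(R)$: choosing a resolution $0\to W_n\to\cdots\to W_0\to M\to 0$ with all $W_i\in\Add_R\omega=\mathcal{P}_\omega(R)$ (resp.\ $W_i\in\mathcal{F}_\omega(R)\subseteq\mathcal{B}_\omega(R)$), all of which lie in ${_R\omega^{\perp}}$ by \cite[Lemma~2.5(1)]{TH}, a routine dimension shift along this sequence gives $M\in{_R\omega^{\perp}}$, whence $M\in{\rm c}\mathcal{T}(R)\cap{_R\omega^{\perp}}=\mathcal{B}_\omega(R)$ by \cite[Theorem~3.9]{TH}; in particular $M_*\in\mathcal{A}_\omega(S)$ by Foxby equivalence. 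By Theorem~3.5 the relative dimension now equals a standard one, namely $n=\pd_SM_*$ (resp.\ $n=\fd_SM_*$).

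For~(2), the point is that finite projective dimension is detected against free modules. Since $\pd_SM_*=n$ there is $L\in\Mod S$ with $\Ext^n_S(M_*,L)\neq 0$; choosing an epimorphism $S^{(J)}\twoheadrightarrow L$ with kernel $L'$ and using $\Ext^{n+1}_S(M_*,L')=0$ (as $\pd_SM_*\leqslant n$), the long exact sequence of $\Ext_S(M_*,-)$ shows $\Ext^n_S(M_*,S^{(J)})\twoheadrightarrow\Ext^n_S(M_*,L)$, so $\Ext^n_S(M_*,S^{(J)})\neq 0$. Now $\omega^{(J)}\in\Add_R\omega=\mathcal{P}_\omega(R)\subseteq\mathcal{B}_\omega(R)$, and since $_R\omega$ is finitely presented $(-)_*$ commutes with coproducts, so $(\omega^{(J)})_*\cong S^{(J)}$. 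As the Foxby equivalence $\mathcal{A}_\omega(S)\simeq\mathcal{B}_\omega(R)$ is exact it transports $\Ext$ between objects of $\mathcal{B}_\omega(R)$ verbatim (a standard fact, reflecting that $\Add_R\omega$ is an $\Ext$-projective generator of the coresolving class $\mathcal{B}_\omega(R)$; cf.\ Fact~2.7), hence $\Ext^n_R(M,\omega^{(J)})\cong\Ext^n_S(M_*,S^{(J)})\neq 0$. On the other hand $\id_R\omega^{(J)}\leqslant\id_R\mathcal{P}_\omega(R)$, so $\Ext^i_R(M,\omega^{(J)})=0$ for $i>\id_R\mathcal{P}_\omega(R)$; comparison forces $n\leqslant\id_R\mathcal{P}_\omega(R)$.

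For~(1) one wants the flat analogue, and here the argument is less immediate. Two natural attempts: (a) push a flat resolution of $M_*$ of length $n=\fd_SM_*$ forward along $\omega\otimes_S-$ — which stays exact because $M\in\mathcal{B}_\omega(R)$ forces $\Tor^S_{\geqslant 1}(\omega,M_*)=0$ — obtaining an $\mathcal{F}_\omega(R)$-resolution of $M$ all of whose terms have injective dimension $\leqslant d:=\id_R\mathcal{F}_\omega(R)$, and then try to show its $d$-th syzygy $\omega\otimes_SK$ is again $\omega$-flat (a dimension count does give $\id_R(\omega\otimes_SK)\leqslant d$, but this alone does not yield flatness of $K$); (b) pass to character modules via $\fd_SM_*=\id_{S^{op}}(M_*)^+$ together with Ishikawa's isomorphism $(M_*)^+\cong M^+\otimes_R\omega$ (valid since $_R\omega$ is finitely presented), transport the question to the $(S^{op},R^{op})$-picture where Theorem~3.5(3) applies to $M^+\in\mathcal{A}_\omega(R^{op})$, and relate the resulting relative injective dimension back to $\id_R\mathcal{F}_\omega(R)$ using $\id_R\mathcal{F}_\omega(R)=\sup\{\mathcal{I}_\omega(S)\text{-}\id_SF\mid F\text{ flat}\}$ (another instance of Theorem~3.5(3)). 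I would expect (b), combined with the Auslander/Bass character-duality, to produce the missing ingredient: a witness for $\fd_SM_*=n$ against an $\omega$-flat module whose injective dimension is controlled by $\id_R\mathcal{F}_\omega(R)$; once that is in hand the comparison proceeds exactly as in~(2).

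The main obstacle is precisely part~(1). Projective dimension behaves well because it is witnessed in one step against a free module and because Foxby equivalence carries $\Ext$ across unchanged; for flat dimension neither tool is directly available — there is no one-step ``resolution of the target'' trick dual to the $\Ext$-argument, $\omega\otimes_S(-)$ of a non-flat syzygy need not be $\omega$-flat, and without a noetherian hypothesis on $S$ one cannot replace arbitrary coproducts of $\omega$ by a single copy. So the crux is pinning down the correct flat-side witness; I believe the character-module reduction resolves this cleanly, since it recasts everything as an injective-dimension statement to which the Foxby/Theorem~3.5 machinery applies directly, but making that airtight is where the real work lies.
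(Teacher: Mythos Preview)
Your argument for part~(2) is correct but follows a different route from the paper. The paper treats both parts by a single splitting argument: with $m=\fd_SM_*$ (resp.\ $\pd_SM_*$), choose a resolution
\[
0\to F_m\to Q_{m-1}\to\cdots\to Q_0\to M_*\to 0
\]
in which $F_m$ is flat (resp.\ projective) and all $Q_i$ are \emph{projective}; since $\Tor^S_{\geqslant 1}(\omega,M_*)=0$, tensoring with $\omega$ yields an exact sequence whose intermediate terms $\omega\otimes_SQ_i$ lie in $\Add_R\omega$ and are therefore $\Ext$-orthogonal to $\omega\otimes_SF_m$. If $m>n:=\id_R\mathcal{F}_\omega(R)$, dimension-shifting along those intermediates gives $\Ext^1_R(K,\omega\otimes_SF_m)\cong\Ext^m_R(M,\omega\otimes_SF_m)=0$ for $K=\Coker(\omega\otimes_SF_m\to\omega\otimes_SQ_{m-1})$; the top short exact sequence then splits, so $K\in\mathcal{P}_\omega(R)\subseteq\mathcal{F}_\omega(R)$, contradicting $\mathcal{F}_\omega(R)$-$\pd_RM=m$. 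Your witness argument for~(2), producing $\Ext^n_S(M_*,S^{(J)})\neq 0$ and transporting it via $\Ext^n_R(M,\omega^{(J)})\cong\Ext^n_S(M_*,(\omega^{(J)})_*)$ (which is exactly the paper's Lemma~3.8), is a clean alternative; its advantage is conceptual transparency, while the paper's approach has the advantage of handling~(1) and~(2) uniformly.

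Your part~(1) has a real gap, and the missing idea is precisely the one just described, which you discarded too quickly in your attempt~(a). You took a flat resolution with all terms merely flat; then the dimension-shift breaks down because $\omega$-flat modules need not be $\Ext$-orthogonal to one another, and you correctly observed that bounding $\id_R(\omega\otimes_SK)$ does not by itself force $K$ flat. The fix is simply to choose the intermediate terms projective and only the last term flat---always possible, since $\fd_SM_*=m$ exactly says the $m$-th syzygy in any projective resolution is flat---after which the $\omega\otimes_SQ_i\in\Add_R\omega$ give the needed orthogonality and the splitting argument runs. Your route~(b) through character modules is therefore unnecessary; it might be made to work, but it requires extra hypotheses and bookkeeping that the direct argument avoids entirely.
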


\begin{proof} (1) Let $\id_R\mathcal{F}_\omega(R)=n<\infty$ and $M\in {\rm c}\mathcal{T}(R)$ with $\mathcal{F}_\omega(R)$-$\pd_RM=m<\infty$.
By Theorem 3.5(1), $\fd_SM_*=m$ and there exists an exact sequence:
$$0 \to F_m \to Q_{m-1} \to \cdots \to Q_1 \to Q_0
\to M_* \to 0 \eqno{(3.5)}$$ in $\Mod S$ with $F_m$ flat and all $Q_i$
projective. Because $\omega\otimes_SM_*\cong M$ and $\Tor_{j\geqslant 1}^{S}(\omega, M_*)$ $=0$ by \cite[Corollary 3.4(3)]{TH},
applying the functor $\omega\otimes _S-$ to the exact sequence (3.5), we
get the following exact sequence:
$$0 \to \omega\otimes_SF_m \to \omega\otimes_SQ_{m-1} \to \cdots \to \omega\otimes_SQ_1 \to \omega\otimes_SQ_0
\to \omega\otimes_SM_*(\cong M) \to 0 \eqno{(3.6)}$$
in $\Mod R$ with $\omega\otimes_SF_m$ in $\mathcal{F}_\omega(R)$ ($=\limit_\omega(R)$ by Proposition 3.4(1))
and all $\omega\otimes_SQ_{i}$ in $\mathcal{P}_\omega(R)$ ($=\Add_R\omega$ by Proposition 3.4(2)). Notice that
$_R\omega$ admits a degreewise finite $R$-projective resolution and $\omega\in {_R\omega^{\bot}}$,
so $\Ext^{j\geqslant 1}_R(\omega\otimes_SQ_{i}, \omega\otimes_SF_m)=0$ for any $0\leqslant i \leqslant m-1$.

Suppose $m>n$. Because $\id_R\omega\otimes_SF_m\leqslant n$, it follows from the exact
sequence (3.6) that $\Ext_R^1(K, \omega\otimes_SF_m)\cong \Ext_R^m(M, \omega\otimes_SF_m)=0$, where
$K=\Coker (\omega\otimes_SF_m \to \omega\otimes_SQ_{m-1})$. Thus the exact sequence
$0 \to \omega\otimes_SF_m \to \omega\otimes_SQ_{m-1} \to K \to 0$ splits and
$K\in \mathcal{P}_\omega(R)(\subseteq \mathcal{F}_\omega(R))$. It induces that $\mathcal{F}_\omega(R)$-$\pd_RM\leqslant m-1$,
which is a contradiction. Thus we conclude that $m\leqslant n$.

(2) It is similar to the proof of (1), so we omit it.
\end{proof}

Note that $_RR_R$ is a semidualizing bimodule. Let $R$ be a left noetherian ring and $_R\omega_S={_RR_R}$. Then we have
the following facts:
\begin{enumerate}
\item $\mathcal{F}_\omega(R)$ and $\mathcal{P}_\omega(R)$
are the subclasses of $\Mod R$ consisting of flat modules and projective modules respectively,
and $\mathcal{F}_\omega(R)$-$\pd_RM=\fd_RM$ and $\mathcal{P}_\omega(R)$-$\pd_RM=\pd_RM$ for any $M\in \Mod R$.
\item $\id_R\mathcal{F}_\omega(R)=\id_RR$ and $\id_R\mathcal{P}_\omega(R)=\id_RR$ by \cite[Theorem 1.1]{Ba1}.
\item ${\rm c}\mathcal{T}(R)=\Mod R$ by \cite[Proposition 3.7]{TH}.
\end{enumerate}
So by Proposition 3.6, we immediately have the following

\begin{cor} \label{cor: 3.7} For a left noetherian ring $R$, we have
\begin{enumerate}
\item $\sup\{\fd_RM\mid M\in \Mod R$ with $\fd_RM<\infty\}\leqslant \id_RR$.
\item {\rm (\cite[Proposition 4.3]{Ba1})} $\sup\{\pd_RM\mid M\in \Mod R$ with $\pd_RM<\infty\}\leqslant \id_RR$.
\end{enumerate}
\end{cor}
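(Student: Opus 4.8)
The plan is to obtain Corollary 3.7 as a direct specialization of Proposition 3.6 to the semidualizing bimodule ${}_R\omega_S = {}_RR_R$, using the three facts listed immediately above the statement. First I would observe that, with $\omega = {}_RR_R$ and $R$ left noetherian, Proposition 3.4(1)--(2) identifies $\mathcal{F}_\omega(R) = \limit_\omega(R)$ with the class of flat left $R$-modules and $\mathcal{P}_\omega(R) = \Add_R\omega$ with the class of projective left $R$-modules; consequently the relative resolution dimensions collapse to the classical ones, $\mathcal{F}_\omega(R)$-$\pd_RM = \fd_RM$ and $\mathcal{P}_\omega(R)$-$\pd_RM = \pd_RM$ for every $M \in \Mod R$. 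Second, I would note that $\id_R\mathcal{F}_\omega(R) = \id_RR$ by \cite[Theorem 1.1]{Ba1}, and that $\id_R\mathcal{P}_\omega(R) = \id_RR$ holds trivially, since a projective module is a direct summand of a free one and, $R$ being noetherian, an arbitrary direct sum of copies of $R$ has injective dimension $\id_RR$. Third, \cite[Proposition 3.7]{TH} gives ${\rm c}\mathcal{T}(R) = \Mod R$, so the side condition ``$M \in {\rm c}\mathcal{T}(R)$'' in Proposition 3.6 is automatic and can be dropped.

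Feeding these identifications into Proposition 3.6(1) rewrites its inequality as $\sup\{\fd_RM \mid M \in \Mod R \text{ with } \fd_RM < \infty\} \leqslant \id_RR$, which is assertion (1); feeding them into Proposition 3.6(2) rewrites its inequality as $\sup\{\pd_RM \mid M \in \Mod R \text{ with } \pd_RM < \infty\} \leqslant \id_RR$, which is assertion (2), recovering Bass's \cite[Proposition 4.3]{Ba1}.

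There is no genuine obstacle here: every step is either a quotation from the literature or an immediate consequence of Proposition 3.4, and the whole argument amounts to translating the relative-homological formulation of Proposition 3.6 back into classical terms. The only point that merits an explicit line of justification is the equality $\id_R\mathcal{P}_\omega(R) = \id_RR$, and that in turn reduces to the standard fact that over a left noetherian ring the injective dimension of any coproduct of copies of $R$ equals $\id_RR$.
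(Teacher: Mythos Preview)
Your proposal is correct and follows essentially the same approach as the paper: both specialize Proposition 3.6 to the semidualizing bimodule $_R\omega_S={_RR_R}$ using exactly the three facts listed just before the corollary. The only cosmetic difference is that the paper cites \cite[Theorem 1.1]{Ba1} for both equalities $\id_R\mathcal{F}_\omega(R)=\id_RR$ and $\id_R\mathcal{P}_\omega(R)=\id_RR$, whereas you give a direct argument for the latter; this is not a substantive divergence.
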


In the rest of this section, for a module $M\in \Mod R$, in case $\mathcal{P}_\omega(R)$-$\pd_RM<\infty$, we establish the relation
between $\mathcal{P}_\omega(R)$-$\pd_RM$
and some standard homological dimensions of related modules.

\begin{lem} \label{lem: 3.8}
If $M\in{\rm c}\mathcal{T}(R)$ and $N\in{_R\omega^{\perp}}$, then for any $i\geqslant 0$, we have an isomorphism of abelian groups:
$$\Ext_R^i(M,N)\cong\Ext_S^i(M_*,N_*).$$
\end{lem}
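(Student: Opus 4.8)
The plan is to realize both sides as cohomology of the complexes obtained by applying $\Hom$ along the adjunction between $(-)_*$ and $\omega\otimes_S-$, using the hypotheses to guarantee that the relevant auxiliary $\Tor$'s and $\Ext$'s vanish. First I would take a proper $\Add_R\omega$-resolution $\cdots\to W_1\to W_0\to M\to 0$ of $M$, which exists by \cite[Proposition 3.7]{TH} since $M\in{\rm c}\mathcal{T}(R)$. Applying $(-)_*$ yields a projective resolution $\cdots\to {W_1}_*\to {W_0}_*\to M_*\to 0$ in $\Mod S$: exactness is precisely properness of the resolution (the functor $\omega\otimes_S-$ sends it back, and $\Tor^S_{\geq1}(\omega,{W_j}_*)=0$ by \cite[Lemma 2.5(2)]{TH}), and each ${W_j}_*$ is projective over $S$ because $W_j\in\Add_R\omega$ and $(_R\omega)_*\cong S$. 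So $\Ext^i_S(M_*,N_*)$ is the $i$-th cohomology of $\Hom_S({W_\bullet}_*,N_*)$.

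Next I would identify $\Hom_S({W_j}_*,N_*)$ with $\Hom_R(W_j,N)$. Since $N\in{_R\omega^\perp}$ and $\omega_S$ is finitely presented, $N_*\cong\Hom_R(\omega,N)$ and one has the natural isomorphism $\Hom_S(\Hom_R(\omega,W_j),\Hom_R(\omega,N))\cong\Hom_R(W_j,N)$ for $W_j\in\Add_R\omega$; this is the standard fact that $(-)_*$ restricted to $\Add_R\omega$ is fully faithful into $\Mod S$ (indeed onto $\Add_S S$), which follows from $(_R\omega)_*\cong S$, $\Hom_R(\omega,\omega)\cong S$, and additivity. These isomorphisms are natural in $W_j$, so they assemble into an isomorphism of complexes $\Hom_S({W_\bullet}_*,N_*)\cong\Hom_R(W_\bullet,N)$. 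Hence $\Ext^i_S(M_*,N_*)$ is the $i$-th cohomology of $\Hom_R(W_\bullet,N)$.

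Finally I would show that $H^i\Hom_R(W_\bullet,N)\cong\Ext^i_R(M,N)$, i.e. that $W_\bullet\to M$ computes $\Ext_R(-,N)$. Since $N\in{_R\omega^\perp}$, we have $\Ext^{\geq1}_R(W_j,N)=0$ for each $j$ (as $W_j\in\Add_R\omega$); thus $W_\bullet$ is a resolution of $M$ by $\Ext_R(-,N)$-acyclic objects. Comparing it with a genuine projective resolution $P_\bullet\to M$ via a chain map lifting $\mathrm{id}_M$ (which exists because the $W_j$ receive maps from projectives, though one must check the comparison map induces a cohomology isomorphism — this is the usual acyclic-resolution argument, dimension-shifting on the syzygies $M_i=\Im(W_i\to W_{i-1})$, each of which again lies in ${\rm c}\mathcal{T}(R)$ and whose $(-)_*$ is the syzygy of $M_*$), one gets $H^i\Hom_R(W_\bullet,N)\cong\Ext^i_R(M,N)$.

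I expect the main obstacle to be the last step: one cannot simply invoke "$\Ext$ is balanced / computed by acyclic resolutions" because $W_\bullet$ need not be a \emph{deleted projective} resolution, only an acyclic-for-$\Hom_R(-,N)$ one, and one must verify the comparison map $P_\bullet\to W_\bullet$ is well-defined and a quasi-isomorphism after $\Hom_R(-,N)$. The clean way around this is to argue by dimension shifting: for $i=0,1$ check directly using left-exactness of $\Hom_R(-,N)$ and the surjection $W_0\to M$, and then for $i\geq2$ use the short exact sequences $0\to M_{j+1}\to W_j\to M_j\to 0$ together with $\Ext^{\geq1}_R(W_j,N)=0$ to get $\Ext^i_R(M,N)\cong\Ext^1_R(M_{i-1},N)$, matching the analogous shift on the $S$-side via ${(M_j)}_*=\Omega^j(M_*)$, which holds because applying $(-)_*$ to $0\to M_{j+1}\to W_j\to M_j\to 0$ stays exact ($\Tor^S_1(\omega,(M_j)_*)=0$ along the way). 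This reduces everything to the already-handled case $i\le1$, giving the isomorphism and its naturality in $i\ge0$.
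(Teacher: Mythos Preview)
Your proof is correct and takes a genuinely different route from the paper's. The paper argues by induction on $i$, coresolving $N$: the base case is the tensor--Hom adjunction combined with $M\cong\omega\otimes_S M_*$, and the inductive step applies $(-)_*$ to $0\to N\to I^0(N)\to\coOmega^1(N)\to 0$ (exact since $N\in{_R\omega^\perp}$), compares the two resulting long exact sequences, and kills the tail term via the mixed isomorphism $\Ext^i_S(M_*,{I^0(N)}_*)\cong\Hom_R(\Tor^S_i(\omega,M_*),I^0(N))=0$. You instead resolve $M$ by a proper $\Add_R\omega$-resolution, push through $(-)_*$ to obtain a projective resolution of $M_*$, and identify the two $\Hom$-complexes termwise. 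Your version makes the isomorphism visibly induced by an isomorphism of complexes, so naturality is immediate; the paper's version avoids the acyclic-resolution comparison altogether. The hypothesis $N\in{_R\omega^\perp}$ also plays different roles: the paper uses it to keep $(-)_*$ exact on the injective coresolution of $N$, while you use it to make each $W_j$ acyclic for $\Hom_R(-,N)$.

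One small correction: your identification $\Hom_S({W_j}_*,N_*)\cong\Hom_R(W_j,N)$ is not really a ``fully faithful'' statement, since $N$ need not lie in $\Add_R\omega$. The clean justification is the adjunction $\omega\otimes_S-\dashv(-)_*$ together with the isomorphism $\theta_{W_j}:\omega\otimes_S{W_j}_*\to W_j$, which yields the natural isomorphism for arbitrary $N\in\Mod R$; in particular $N\in{_R\omega^\perp}$ is not needed at this step.
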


\begin{proof}
We proceed by induction on $i$.

Let $i=0$. Since $M\in{\rm c}\mathcal{T}(R)$, $\omega\otimes_SM_*\cong M$. It follows from the adjoint isomorphism theorem that $\Hom_R(M,N)\cong
\Hom_R(\omega\otimes_SM_*,N)\cong$ $\Hom_S(M_*,N_*)$. Indeed the isomorphism is natural in $M$ and $N$.

Now suppose $i\geqslant 1$. The induction hypothesis implies that there exists a natural isomorphism:
$$\Ext^j_R(L,H)\cong\Ext^j_S(L_*,H_*)$$
for any $L\in{\rm c}\mathcal{T}(R)$, $H\in{_R\omega^{\perp}}$ and $0\leqslant j\leqslant i-1$. Because $N\in{_R\omega^{\perp}}$ by assumption,
$\coOmega^1(N)\in {_R\omega^{\perp}}$ and we have an exact sequence:
$$0\rightarrow N_*\rightarrow {I^0(N)}_*\rightarrow {\coOmega^1(N)}_*\rightarrow 0.$$
Applying the functor $\Hom_S(M_*,-)$ to it yields a commutative diagram with exact rows:
{\footnotesize$$\xymatrix{\Ext_R^{i-1}(M,I^0(N)) \ar[r] \ar[d] & \Ext_R^{i-1}(M,\coOmega^1(N)) \ar[r] \ar[d]
& \Ext_R^{i}(M,N) \ar[r] \ar[d]& 0\\
\Ext_S^{i-1}(M_*,{I^0(N)}_*) \ar[r]& \Ext_S^{i-1}(M_*,{\coOmega^1(N)}_*) \ar[r] & \Ext_S^{i}(M_*,N_*) \ar[r] &\Ext_S^{i}(M_*,{I^0(N)}_*). }$$}
By the induction hypothesis, the first two columns in the above diagram are natural isomorphisms.
Since $M\in{\rm c}\mathcal{T}(R)$ by assumption, by the mixed isomorphism theorem and \cite[Corollary 3.4(3)]{TH}
we have $\Ext_S^{i}(M_*,{I^0(N)}_*)\cong\Hom_R(\Tor_S^{i}(\omega,M_*),I^0(N))=0$.
It follows that $\Ext_R^i(M,N)\cong\Ext_S^i(M_*,N_*)$ naturally.
\end{proof}

We also need the following criterion.

\begin{lem} \label{lem: 3.9} Let $M\in \Mod R$ admit a degreewise finite $R$-projective resolution. If $\mathcal{P}_\omega(R)$-$\pd_RM<\infty$,
then $\mathcal{P}_\omega(R)$-$\pd_RM=\sup\{i\geqslant 0\mid\Ext_R^i(M,\omega)\neq 0\}$.
\end{lem}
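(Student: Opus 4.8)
The plan is to compute $\mathcal{P}_\omega(R)$-$\pd_RM$ via the characterization of $\omega$-projective resolutions, transfer the problem to the ring $S$ through the functor $(-)_*$, and then apply the standard fact that over a ring the projective dimension of a finitely presented module with finite projective dimension is detected by the top nonvanishing $\Ext$ against $S$ — but the twist is that we must detect it against $\omega$ on the $R$-side. First I would set $n=\mathcal{P}_\omega(R)$-$\pd_RM<\infty$ and use Proposition 3.4(2) to fix an $\Add_R\omega$-resolution $0\to \omega_n\to\cdots\to\omega_0\to M\to 0$ of shortest length. Applying $(-)_*=\Hom_R(\omega,-)$ and using that each $\omega_i\in{_R\omega^\perp}$ (so the sequence stays exact, by \cite[Lemma 2.5(1)]{TH}) and that each ${\omega_i}_*$ is projective in $\Mod S$, I get $\pd_SM_*\leqslant n$; conversely, since $M\in{\rm c}\mathcal{T}(R)$ would let us pull back a projective resolution of $M_*$ (this is exactly Theorem 3.5(2)), in fact $\pd_SM_*=n$. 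I would remark that the hypothesis that $M$ admits a degreewise finite $R$-projective resolution, together with $\mathcal{P}_\omega(R)$-$\pd_RM<\infty$, forces $M\in{\rm c}\mathcal{T}(R)$ (the $\omega_i$ and their syzygies lie in ${_R\omega^\perp}$ and the resolution is proper), so Lemma 3.8 and Theorem 3.5(2) are available.

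The core computation is then: $\sup\{i\mid\Ext_R^i(M,\omega)\neq 0\}=\pd_SM_*$. By Lemma 3.8 (applicable since $M\in{\rm c}\mathcal{T}(R)$ and $\omega\in{_R\omega^\perp}$), we have $\Ext_R^i(M,\omega)\cong\Ext_S^i(M_*,\omega_*)\cong\Ext_S^i(M_*,S)$, using the homothety isomorphism $\omega_*=\Hom_R(\omega,\omega)\cong S$ from Definition 2.1(b2). So the claim reduces to $\sup\{i\mid\Ext_S^i(M_*,S)\neq 0\}=\pd_SM_*$. For this I invoke the classical result: if $L$ is a left $S$-module that is \emph{finitely presented} and has finite projective dimension, then $\pd_SL=\sup\{i\mid\Ext_S^i(L,S)\neq 0\}$ (this is the grade/codimension inequality made into an equality under finite projective dimension; one proves it by dimension shifting — the top syzygy $\Omega^{n-1}(L_*)$ fits in $0\to\Omega^n\to P_{n-1}\to\Omega^{n-1}\to 0$ with $\Omega^n$ projective, $\Omega^{n-1}$ finitely presented non-projective, and applying $\Hom_S(-,S)$ shows $\Ext^1_S(\Omega^{n-1},S)\neq 0$ since otherwise the sequence would split). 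I must check $M_*$ is finitely presented over $S$: it has the finite projective resolution $\cdots\to{\omega_1}_*\to{\omega_0}_*\to M_*\to 0$ with ${\omega_i}_*\in\Add_RS$... more carefully, from a degreewise finite $R$-projective resolution of $M$ one obtains, after applying $(-)_*$ and using $\mathcal{P}_\omega(R)$-$\pd_RM<\infty$, a resolution of $M_*$ by finitely generated projective $S$-modules, so $M_*$ is finitely presented (indeed finitely generated of finite projective dimension).

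I expect the main obstacle to be bookkeeping around why $M\in{\rm c}\mathcal{T}(R)$ and why $M_*$ is finitely presented over $S$ — i.e., correctly combining the "degreewise finite $R$-projective resolution" hypothesis with the finiteness of $\mathcal{P}_\omega(R)$-$\pd_RM$ to land inside the hypotheses of Lemma 3.8 and Theorem 3.5(2). The classical $\Ext_S^i(-,S)$-computation over $S$ is routine once finite presentation and finite projective dimension are in hand, and the identification $\omega_*\cong S$ is immediate from the definition of semidualizing bimodule. So the write-up should: (i) reduce to $\pd_SM_*$ via Theorem 3.5(2); (ii) rewrite $\Ext_R^i(M,\omega)$ as $\Ext_S^i(M_*,S)$ via Lemma 3.8 and (b2); (iii) quote the standard equality for finitely presented $S$-modules of finite projective dimension, having verified finite presentation of $M_*$.
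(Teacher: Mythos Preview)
Your overall route---transfer to $S$ via Theorem 3.5(2) and Lemma 3.8, then invoke the classical equality $\pd_SL=\sup\{i\mid\Ext_S^i(L,S)\neq 0\}$---is a natural idea, and the identification $\Ext_R^i(M,\omega)\cong\Ext_S^i(M_*,S)$ is exactly what the paper uses later in Proposition 3.10. You are also right that $M\in{\rm c}\mathcal{T}(R)$ is the potential sticking point; in fact this is automatic from $\mathcal{P}_\omega(R)\text{-}\pd_RM<\infty$ alone (the syzygies of any finite $\Add_R\omega$-resolution lie in $_R\omega^\perp$, so the resolution is proper), and the degreewise-finite hypothesis is not needed there. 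Where your argument has a genuine gap is the claim that $M_*$ admits a resolution by \emph{finitely generated} projective $S$-modules. Applying $(-)_*=\Hom_R(\omega,-)$ to a degreewise finite $R$-projective resolution $\cdots\to P_1\to P_0\to M\to 0$ does not do this: the modules $\Hom_R(\omega,P_i)$ have no reason to be projective over $S$, and since the $P_i$ need not lie in $_R\omega^\perp$ the resulting complex need not even be exact. The resolution you do obtain, $0\to{\omega_n}_*\to\cdots\to{\omega_0}_*\to M_*\to 0$, has projective terms but with ${\omega_i}_*$ possibly of infinite rank. Without finitely generated projectives, your splitting argument for $\Ext_S^n(M_*,S)\neq 0$ fails: from $\Ext_S^1(\Omega^{n-1},S)=0$ you cannot conclude $\Ext_S^1(\Omega^{n-1},\Omega^n)=0$ unless $\Ext_S^1(\Omega^{n-1},-)$ commutes with the (possibly infinite) direct sum in which $\Omega^n$ sits as a summand.

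The paper's proof stays on the $R$-side and uses the hypothesis on $M$ precisely at this point. Take a shortest resolution $0\to\omega_n\to\cdots\to\omega_0\to M\to 0$ with $\omega_i\in\Add_R\omega$; the semidualizing conditions give $\Ext_R^{\geqslant 1}(\omega_j,\omega_i)=0$, so $\Ext_R^i(M,\omega)=0$ for $i>n$. If also $\Ext_R^n(M,\omega)=0$, then---because $M$ admits a degreewise finite projective resolution---$\Ext_R^n(M,-)$ commutes with direct sums (this is \cite[Lemma 3.1.6]{GT}), whence $\Ext_R^n(M,\omega_n)=0$; dimension shifting yields $\Ext_R^1(M_{n-1},\omega_n)=0$ for $M_{n-1}=\Coker(\omega_n\to\omega_{n-1})$, so $0\to\omega_n\to\omega_{n-1}\to M_{n-1}\to 0$ splits, contradicting minimality of $n$. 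Your approach can be salvaged by applying Lemma 3.8 with $N=\omega^{(I)}\in{_R\omega^\perp}$ to get $\Ext_S^n(M_*,S^{(I)})\cong\Ext_R^n(M,\omega^{(I)})\cong\Ext_R^n(M,\omega)^{(I)}$, which supplies exactly the commutation with direct sums you need---but at that point you are running the paper's argument through an extra layer.
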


\begin{proof}
Let $\mathcal{P}_\omega(R)$-$\pd_RM=n<\infty$ and
$$0\rightarrow \omega_n\rightarrow\cdots \rightarrow \omega_{1}\rightarrow \omega_0\rightarrow M\rightarrow 0$$
be an exact sequence in $\Mod R$ with all $\omega_i$ in $\mathcal{P}_\omega(R)(=\Add_R\omega)$. It is easy to see that $\Ext_R^{i}(M,\omega)=0$ for $i\geqslant n+1$.
Put $M_{n-1}=\Coker(\omega_{n}\rightarrow \omega_{n-1})$.

If $\Ext_R^n(M,\omega)=0$, then by \cite[Lemma 3.1.6]{GT}, we have that $\Ext_R^n(M,\omega_i)=0$ and $\Ext_R^{\geqslant 1}(\omega_j,\omega_i)=0$
for any $0\leqslant i,j \leqslant n$. So $\Ext_R^1(M_{n-1},\omega_n)\cong \Ext_R^n(M,\omega_n)=0$ and the exact sequence:
$$0\rightarrow \omega_n\rightarrow \omega_{n-1}\rightarrow M_{n-1}\rightarrow 0$$ splits. It implies that $M_{n-1}\in \mathcal{P}_\omega(R)$
and $\mathcal{P}_\omega(R)$-$\pd_RM\leqslant n-1$, which is a contradiction. So we conclude that $\Ext_R^n(M,\omega)\neq 0$.
\end{proof}

Now we are in a position to give the following

\begin{prop} \label{prop: 3.10} Let $M\in \Mod R$ admit a degreewise finite $R$-projective resolution.
If $\mathcal{P}_\omega(R)$-$\pd_RM<\infty$, then
$\mathcal{P}_\omega(R)$-$\pd_RM\leqslant\min\{\id_R\omega, \id_SS, \pd_RM, \pd_SM_*\}$.
\end{prop}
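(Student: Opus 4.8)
The plan is to reduce the whole statement to one $\Ext$-vanishing computation and then read off all four bounds from it. First I would put $n=\mathcal{P}_\omega(R)$-$\pd_RM$, which is finite by hypothesis, and apply Lemma 3.9 (legitimate since $M$ admits a degreewise finite $R$-projective resolution) to get $n=\sup\{i\geqslant 0\mid\Ext_R^i(M,\omega)\neq 0\}$; in particular $\Ext_R^n(M,\omega)\neq 0$. Two of the four inequalities drop out at once: if $n>\id_R\omega$ then $\Ext_R^n(-,\omega)=0$ and if $n>\pd_RM$ then $\Ext_R^n(M,-)=0$, either of which contradicts $\Ext_R^n(M,\omega)\neq 0$. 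Hence $n\leqslant\id_R\omega$ and $n\leqslant\pd_RM$.

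For the two bounds on the $S$-side I would first observe that $M\in{\rm c}\mathcal{T}(R)$. Since $\mathcal{P}_\omega(R)=\Add_R\omega$ (Proposition 3.4(2)), finiteness of $\mathcal{P}_\omega(R)$-$\pd_RM$ furnishes an exact sequence $0\to\omega_n\to\cdots\to\omega_0\to M\to 0$ with all $\omega_i\in\Add_R\omega$. Peeling this sequence from the left and using that $\mathcal{B}_\omega(R)$ is coresolving (Fact 2.7(1)), hence closed under cokernels of monomorphisms, a descending induction on the cosyzygies $K_i=\Im(\omega_i\to\omega_{i-1})$, starting from $K_n\cong\omega_n\in\Add_R\omega\subseteq\mathcal{B}_\omega(R)$ and using the short exact sequences $0\to K_{i+1}\to\omega_i\to K_i\to 0$, shows $K_i\in\mathcal{B}_\omega(R)$ for every $i$, so $M=K_0\in\mathcal{B}_\omega(R)$. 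By \cite[Theorem 3.9]{TH}, $\mathcal{B}_\omega(R)={\rm c}\mathcal{T}(R)\cap{_R\omega^{\perp}}$, whence $M\in{\rm c}\mathcal{T}(R)$.

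Now I would invoke Lemma 3.8 with $N=\omega$: here $\omega\in{_R\omega^{\perp}}$ by Definition 2.1(c1), and $\Hom_R(\omega,\omega)\cong S$ as a left $S$-module by Definition 2.1(b2). Lemma 3.8 then gives $\Ext_R^i(M,\omega)\cong\Ext_S^i(M_*,S)$ for all $i\geqslant 0$, so $n=\sup\{i\geqslant 0\mid\Ext_S^i(M_*,S)\neq 0\}$; repeating the elementary argument of the first paragraph yields $n\leqslant\id_SS$ and $n\leqslant\pd_SM_*$, and combining the four inequalities finishes the proof. I do not anticipate a genuinely hard step; the one point that must be gotten right is that the finiteness hypothesis already forces $M\in{\rm c}\mathcal{T}(R)$, which is precisely what makes Lemma 3.8 applicable and lets the $S$-side bounds be obtained exactly as the $R$-side ones. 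It is also worth double-checking that the coresolving property really supplies closure under cokernels of monomorphisms (it does, by the definition recalled before Definition 2.6) and that $\Hom_R(\omega,\omega)\cong S$ holds as left $S$-modules.
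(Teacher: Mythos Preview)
Your proof is correct and follows essentially the same route as the paper: identify $\mathcal{P}_\omega(R)$-$\pd_RM$ with $\sup\{i\mid\Ext_R^i(M,\omega)\neq 0\}$ via Lemma~3.9, pass to $\Ext_S^i(M_*,S)$ via Lemma~3.8, and read off all four bounds. The only difference is cosmetic: the paper obtains $M\in{\rm c}\mathcal{T}(R)$ in one line by citing \cite[Proposition~3.7]{TH}, whereas you take a short detour through $\mathcal{B}_\omega(R)$ using its coresolving property---both are valid and end in the same place.
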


\begin{proof} Let $M\in \Mod R$ with $\mathcal{P}_\omega(R)$-$\pd_RM<\infty$. Then $M\in{\rm c}\mathcal{T}(R)$
by \cite[Proposition 3.7]{TH}. So $\Ext_R^i(M,\omega)\cong\Ext_S^i(M_*, \omega_*)\cong\Ext_S^i(M_*, S)$
for any $i\geqslant 0$ by Lemma 3.8, and hence $\sup\{i\geqslant 0\mid\Ext_R^i(M,\omega)\neq 0\} \leqslant\min
\{ \id_R\omega, \id_SS, \pd_RM, \pd_SM_*\}$. Now the assertion follows from Lemma 3.9.
\end{proof}

The following example shows that the finiteness of $\mathcal{P}_\omega(R)$-$\pd_RM$
is necessary for the conclusion of Proposition 3.10.

\begin{exa} \label{exa: 3.11} {\rm Let $G$ be a finite group and $k$ a field such that the characteristic
of $k$ divides $|G|$. Take $R=S=\omega=kG$. By \cite[Theorem 3.3 and Propositon 3.10]{ARS},
the group algebra $kG$ is a non-semisimple symmetric artin algebra. Then $\id_{R}\omega=0$
and there exists a $kG$-module $M$ with $\mathcal{P}_\omega(R)$-$\pd_RM$ infinite.}
\end{exa}

\section {\bf The Bass Injective Dimension of Modules}

For a module $M$ in $\Mod R$, we study in this section the properties of the {\bf Bass injective dimension} $\mathcal{B}_\omega(R)$-$\id_RM$ of $M$.
We begin with the following easy observation.

\begin{lem} \label{lem: 4.1} For any $M\in \Mod R$, if $\mathcal{B}_\omega(R)$-$\id_RM<\infty$ and $M\in {_R\omega^{\bot}}$,
then $M\in\mathcal{B}_\omega(R)$.
\end{lem}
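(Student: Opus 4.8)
The plan is to induct on $n:=\mathcal{B}_\omega(R)$-$\id_RM$, using that $\mathcal{B}_\omega(R)$ is coresolving with $\Add_R\omega$ as an $\Ext$-projective generator (Fact 2.7(1)). When $n=0$ there is nothing to prove. For the inductive step, suppose $\mathcal{B}_\omega(R)$-$\id_RM=n\geqslant 1$ and pick a $\mathcal{B}_\omega(R)$-coresolution
$$0\to M\to B^0\to B^1\to\cdots\to B^n\to 0$$
with all $B^i\in\mathcal{B}_\omega(R)$. Let $L:=\coker(M\to B^0)=\im(B^0\to B^1)$, so $\mathcal{B}_\omega(R)$-$\id_RL\leqslant n-1$ and we have a short exact sequence $0\to M\to B^0\to L\to 0$.

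The key point I would establish is that this sequence forces $L\in{_R\omega^{\perp}}$, so that the induction hypothesis applies to $L$ and gives $L\in\mathcal{B}_\omega(R)$. Since $M\in{_R\omega^{\perp}}$ by hypothesis and $B^0\in\mathcal{B}_\omega(R)\subseteq{_R\omega^{\perp}}$ (condition (B1)), the long exact sequence obtained by applying $\Hom_R(\omega,-)$ shows $\Ext^i_R(\omega,L)\cong\Ext^{i+1}_R(\omega,M)=0$ for all $i\geqslant 1$, i.e. $L\in{_R\omega^{\perp}}$. By induction, $L\in\mathcal{B}_\omega(R)$. Now I have a short exact sequence $0\to M\to B^0\to L\to 0$ with $B^0,L\in\mathcal{B}_\omega(R)$ and $M\in{_R\omega^{\perp}}$, and I want to conclude $M\in\mathcal{B}_\omega(R)$; this is where the coresolving structure is used in the reverse direction — $\mathcal{B}_\omega(R)$ being coresolving tells us it is closed under cokernels of monomorphisms and under extensions, but here I need closure under \emph{kernels} of epimorphisms between objects of $\mathcal{B}_\omega(R)$, which need not hold for an arbitrary coresolving class.

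To get around that, I would verify the defining conditions (B1)–(B3) for $M$ directly. Condition (B1), $M\in{_R\omega^{\perp}}$, holds by hypothesis. For (B2), apply $\Hom_R(\omega,-)$ to $0\to M\to B^0\to L\to 0$: since $\Ext^{\geqslant1}_R(\omega,M)=0$, the sequence $0\to M_*\to B^0_*\to L_*\to 0$ is exact in $\Mod S$; since $\Tor^S_{\geqslant1}(\omega,B^0_*)=0=\Tor^S_{\geqslant1}(\omega,L_*)$ (condition (B2) for $B^0$ and $L$), the long exact sequence in $\Tor^S(\omega,-)$ together with $\Tor^S_1(\omega,L_*)=0$ yields $\Tor^S_{\geqslant1}(\omega,M_*)=0$, which is (B2). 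For (B3), consider the commutative diagram with exact rows obtained by tensoring $0\to M_*\to B^0_*\to L_*\to 0$ with $\omega$ and mapping down via the $\theta$'s into $0\to M\to B^0\to L\to 0$ (the top row is exact because $\Tor^S_1(\omega,L_*)=0$); since $\theta_{B^0}$ and $\theta_L$ are isomorphisms (condition (B3) for $B^0$ and $L$), the Five Lemma (or Snake Lemma) gives that $\theta_M$ is an isomorphism, which is (B3). Hence $M\in\mathcal{B}_\omega(R)$, completing the induction.

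The main obstacle is the one flagged above: $\mathcal{B}_\omega(R)$ is only known to be coresolving, not resolving, so one cannot simply quote closure under kernels of epimorphisms; the argument must instead unwind the three structural conditions (B1)–(B3) by hand, and the slightly delicate point is making sure the $\Tor$-vanishing $\Tor^S_1(\omega,L_*)=0$ is available \emph{before} invoking it to get exactness of the tensored top row and to run the diagram chase for $\theta_M$. Everything else is a routine long-exact-sequence and diagram-chase bookkeeping, and the inductive bound on $\mathcal{B}_\omega(R)$-$\id_RL$ is immediate from the chosen coresolution.
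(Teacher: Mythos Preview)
Your proof is correct and follows exactly the induction on $\mathcal{B}_\omega(R)$-$\id_RM$ that the paper indicates (the paper gives no further detail beyond ``by induction''). Your final step---verifying (B1)--(B3) for $M$ by hand---could be shortened by citing \cite[Theorem~6.2]{HW}, which shows that $\mathcal{B}_\omega(R)$ is closed under kernels of epimorphisms (not merely coresolving), so $0\to M\to B^0\to L\to 0$ with $B^0,L\in\mathcal{B}_\omega(R)$ immediately gives $M\in\mathcal{B}_\omega(R)$.
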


\begin{proof}
It is easy to get the assertion by using induction on $\mathcal{B}_\omega(R)$-$\id_RM$.
\end{proof}

Now we give some criteria for computing $\mathcal{B}_\omega(R)$-$\id_RM$ in terms of the vanishing of
Ext-functors and some special approximations of $M$.

\begin{thm} \label{thm: 4.2} Let $M\in\Mod R$ with $\mathcal{B}_\omega(R)$-$\id_RM<\infty$ and $n\geqslant 0$. Then the following statements
are equivalent.
\begin{enumerate}
\item $\mathcal{B}_\omega(R)$-$\id_RM\leqslant n$.
\item $\coOmega^m(M)\in \mathcal{B}_\omega(R)$ for $m\geqslant n$.
\item $\Ext_{R}^{\geqslant n+1}(\omega,M)=0$.
\item There exists an exact sequence: $$0\rightarrow M\rightarrow X^M\rightarrow W^M\rightarrow 0$$ in $\Mod R$ such that $X^M\in \mathcal{B}_\omega(R)$ and $\mathcal{P}_\omega(R)$-$\id_RW^M\leqslant n-1$.
\item There exists an exact sequence: $$0\rightarrow X_M\rightarrow W_M\rightarrow M\rightarrow 0$$ in $\Mod R$ such that $X_M\in \mathcal{B}_\omega(R)$ and $\mathcal{P}_\omega(R)$-$\id_RW_M\leqslant n$.
\end{enumerate}
\end{thm}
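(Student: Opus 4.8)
The plan is to prove the chain of implications $(1)\Rightarrow(2)\Rightarrow(3)\Rightarrow(1)$ and then separately connect $(4)$ and $(5)$ into the cycle. Throughout I would use freely the facts that $\mathcal{B}_\omega(R)$ is coresolving (Fact 2.7), that $\Add_R\omega=\mathcal{P}_\omega(R)$ is an $\Ext$-projective generator for $\mathcal{B}_\omega(R)$, and that $\mathcal{B}_\omega(R)={\rm c}\mathcal{T}(R)\cap{_R\omega^{\perp}}$; Lemma 4.1 is the crucial bridge saying that finite Bass injective dimension together with membership in ${_R\omega^{\perp}}$ forces membership in $\mathcal{B}_\omega(R)$.

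First, $(1)\Rightarrow(2)$: if $\mathcal{B}_\omega(R)$-$\id_RM\leqslant n$, pick a $\mathcal{B}_\omega(R)$-coresolution of $M$ of length $\leqslant n$; since $\mathcal{B}_\omega(R)$ is coresolving and contains the injectives, a dimension-shifting argument along the minimal injective resolution (1.1) shows $\coOmega^m(M)$ lies in $\mathcal{B}_\omega(R)$ for $m\geqslant n$ — the point is that $\coOmega^m(M)$ sits in an exact sequence with the $n$ left-hand terms of the Bass coresolution and with injective modules, all of which are in the coresolving class $\mathcal{B}_\omega(R)$, so $\coOmega^m(M)\in\mathcal{B}_\omega(R)$. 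Then $(2)\Rightarrow(3)$ is immediate: $\Ext_R^{\geqslant n+1}(\omega,M)\cong\Ext_R^{\geqslant 1}(\omega,\coOmega^n(M))=0$ by condition (B1) applied to $\coOmega^n(M)\in\mathcal{B}_\omega(R)$. For $(3)\Rightarrow(1)$, assume $\Ext_R^{\geqslant n+1}(\omega,M)=0$; since $\mathcal{B}_\omega(R)$-$\id_RM<\infty$ by hypothesis, say it is $t$, I would argue by descending induction that we can lower $t$ to $n$. If $t>n$, take a Bass coresolution $0\to M\to B^0\to\cdots\to B^t\to 0$ and let $N=\coOmega^1(M)$ in the minimal injective resolution; a standard comparison shows $\coOmega^{t-1}(M)$ has $\mathcal{B}_\omega(R)$-injective dimension $\leqslant 1$ and, using $\Ext_R^{\geqslant 1}(\omega,\coOmega^{t-1}(M))\cong\Ext_R^{\geqslant t}(\omega,M)=0$ (valid since $t\geqslant n+1$), Lemma 4.1 gives $\coOmega^{t-1}(M)\in\mathcal{B}_\omega(R)$, hence $\mathcal{B}_\omega(R)$-$\id_RM\leqslant t-1$, a contradiction.

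For the approximation conditions, to get $(1)\Rightarrow(4)$ I would iterate the defining property of the $\Ext$-projective generator $\mathcal{P}_\omega(R)$ for $\mathcal{B}_\omega(R)$: writing $X^M=\coOmega^0$ of a suitable construction, one builds the short exact sequence $0\to M\to X^M\to W^M\to 0$ with $X^M\in\mathcal{B}_\omega(R)$ by splicing together the minimal injective (co)resolution data with a $\mathcal{P}_\omega(R)$-coresolution of the cosyzygies; the length bound $\mathcal{P}_\omega(R)$-$\id_RW^M\leqslant n-1$ comes from counting how many $\Add_R\omega$-terms are needed, controlled by $(2)$. The implication $(4)\Rightarrow(5)$ (and $(5)\Rightarrow(1)$) follows by a horseshoe/snake-lemma manipulation: from the sequence in $(4)$, pulling back along an $\mathcal{P}_\omega(R)$-cover of $W^M$ produces the sequence in $(5)$ with the dimension shifted by one, and conversely $(5)$ directly exhibits a short resolution proving $\mathcal{B}_\omega(R)$-$\id_RM\leqslant n$ since $\mathcal{P}_\omega(R)\subseteq\mathcal{B}_\omega(R)$ and $\mathcal{P}_\omega(R)$-injective dimension $\leqslant n$ gives a $\mathcal{B}_\omega(R)$-coresolution of length $\leqslant n$. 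The main obstacle I anticipate is $(3)\Rightarrow(1)$: controlling the $\mathcal{B}_\omega(R)$-injective dimension purely from Ext-vanishing requires the finiteness hypothesis and careful use of Lemma 4.1 at the top cosyzygy, and one must be sure the relevant short exact sequences stay inside the coresolving class at every step; the bookkeeping relating $\mathcal{P}_\omega(R)$-$\id$ to $\mathcal{B}_\omega(R)$-$\id$ in $(4)$ and $(5)$ is the other delicate point, since it hinges on $\Add_R\omega$ being not just a generator but an $\Ext$-projective one.
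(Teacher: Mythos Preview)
Your overall strategy matches the paper's, and the cycle $(1)\Rightarrow(2)\Rightarrow(3)\Rightarrow(1)$ is handled correctly (the paper's $(3)\Rightarrow(1)$ is even shorter than your descending induction: just apply Lemma~4.1 directly to $\coOmega^n(M)$, which lies in $_R\omega^{\perp}$ and has finite Bass injective dimension). The problems are in how you connect $(4)$ and $(5)$.

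For $(4)\Rightarrow(5)$ you write ``pulling back along a $\mathcal{P}_\omega(R)$-cover of $W^M$.'' This does not work: pulling $0\to M\to X^M\to W^M\to 0$ back along an epimorphism $P\twoheadrightarrow W^M$ yields a sequence $0\to M\to Y\to P\to 0$, which still has $M$ on the left, not on the right as required in $(5)$. The correct maneuver (and what the paper does) is to apply the \emph{generator} property of $\mathcal{P}_\omega(R)$ in $\mathcal{B}_\omega(R)$ to $X^M$, obtaining $0\to X'\to W_0\to X^M\to 0$ with $W_0\in\mathcal{P}_\omega(R)$ and $X'\in\mathcal{B}_\omega(R)$, and then pull back along $M\hookrightarrow X^M$. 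The resulting left column is $0\to X'\to W_M\to M\to 0$, and the middle row $0\to W_M\to W_0\to W^M\to 0$ gives $\mathcal{P}_\omega(R)$-$\id_R W_M\leqslant n$.

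For $(5)\Rightarrow(1)$ you say the sequence ``directly exhibits'' a $\mathcal{B}_\omega(R)$-coresolution of $M$. It does not: the sequence $0\to X_M\to W_M\to M\to 0$ terminates in $M$, so a coresolution of $W_M$ is not a coresolution of $M$. One needs a push-out: embed $W_M$ into the first term $W^0$ of a $\mathcal{P}_\omega(R)$-coresolution, push out along $W_M\twoheadrightarrow M$, and use closure of $\mathcal{B}_\omega(R)$ under extensions (or cokernels of monomorphisms) to see the pushout lies in $\mathcal{B}_\omega(R)$. This is precisely the paper's proof of $(5)\Rightarrow(4)$.
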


\begin{proof} $(1)\Rightarrow (2)$ follows from \cite[Theorem 6.2]{HW} and \cite[Theorem 4.8]{H2}, $(2)\Rightarrow (3)$ follows from the dimension shifting, and
$(4)\Rightarrow (1)$ follows from the fact that $\mathcal{P}_\omega(R)\subseteq\mathcal{B}_\omega(R)$.

$(3)\Rightarrow (1)$ Let $M\in\Mod R$ with $\mathcal{B}_\omega(R)$-$\id_RM<\infty$. Then $\mathcal{B}_\omega(R)$-$\id_R\coOmega^n(M)<\infty$ by
\cite[Theorem 6.2]{HW} and \cite[Theorem 4.8]{H2}. If $\Ext_{R}^{\geqslant n+1}(\omega,M)=0$, then
$\coOmega^n(M)\in {_R\omega^{\bot}}$, and so $\coOmega^n(M)\in \mathcal{B}_\omega(R)$ by Lemma 4.1. It follows that $\mathcal{B}_\omega(R)$-$\id_RM\leqslant n$.

$(1)\Rightarrow (4)$ By \cite[Theorem 6.2]{HW}, $\mathcal{B}_\omega(R)$ is closed under extensions. By \cite[Proposition 3.7]{TH}, it is easy to see that
$\mathcal{P}_\omega(R)(=\Add_R\omega)$ is a $\mathcal{P}_\omega(R)$-proper generator
for $\mathcal{B}_\omega(R)$. Then the assertion follows from \cite[Theorem 3.7]{H2}.

$(4)\Rightarrow (5)$ Assume that exists an exact sequence:
$$0\rightarrow M\rightarrow X^M\rightarrow W^M\rightarrow 0$$ in $\Mod R$ such that $X^M\in \mathcal{B}_\omega(R)$ and $\mathcal{P}_\omega(R)$-$\id_RW^M\leqslant n-1$.
By \cite[Proposition 3.7]{TH}, there exists an exact sequence:
$$0\rightarrow X^{'}\rightarrow W_0\rightarrow X^M\rightarrow 0$$ in $\Mod R$ with $W_0\in \mathcal{P}_\omega(R)$ and $X'\in \mathcal{B}_\omega(R)$. Now consider the following pull-back diagram:
$$\xymatrix{ &   0 \ar[d] & 0 \ar[d]& & \\
&   X^{'} \ar@{=}[r] \ar[d]& X^{'}   \ar[d]& \\
0 \ar[r] &  W_M \ar[r] \ar[d]& W_0 \ar[r] \ar[d]& W^M \ar[r] \ar@{=}[d] & 0 \\
0 \ar[r] &  M \ar[r] \ar[d]& X^M \ar[r] \ar[d]& W^M \ar[r]  & 0 \\
& 0 & 0. & }$$
Then the leftmost column in the above diagram is the desired sequence.

$(5)\Rightarrow (4)$ Assume that there exists an exact sequence: $$0\rightarrow X_M\rightarrow W_M\rightarrow M\rightarrow 0$$ in $\Mod R$ such that
$X_M\in \mathcal{B}_\omega(R)$ and $\mathcal{P}_\omega(R)$-$\id_RW_M\leqslant n$. Then there exists an exact sequence:
$$0\rightarrow W_M\rightarrow W^0\rightarrow W^{'}\to 0$$ in $\Mod R$ with $W^0\in \mathcal{P}_\omega(R)$ and $\mathcal{P}_\omega(R)$-$\id_RW^{'}\leqslant n-1$.
Consider the following push-out diagram:
$$\xymatrix{
& & & 0 \ar[d] & 0 \ar[d] & \\
& 0 \ar[r] & X_M \ar@{=}[d] \ar[r] & W_M \ar[d]\ar[r] & M \ar[d]\ar[r] & 0\\
& 0 \ar[r] & X_M \ar[r] & W^0 \ar[d]\ar[r] & X \ar[d]\ar[r] & 0 \\
& & & W^{'} \ar[d]\ar@{=}[r] & W^{'} \ar[d] & \\
& & & 0 & 0. & }$$
It follows from \cite[Theorem 6.2]{HW} and the exactness of middle row in the above diagram that $X\in \mathcal{B}_\omega(R)$. So the rightmost column in the above
diagram  is the desired sequence.
\end{proof}

\begin{rem}\label{rem:4.3} {\rm The only place where the assumption that $\mathcal{B}_\omega(R)$-$\id_RM<\infty$ in Theorem 4.2 is used is in showing $(3)\Rightarrow (1)$.}
\end{rem}

If the given semidualizing module $_R\omega_S$
is faithful, then a module in $\Mod R$ with finite Bass injective dimension is in $\mathcal{B}_\omega(R)$ by \cite[Theorem 6.3]{HW}.
However, this property does not hold true in general.

\begin{exa} \label{exa: 4.4} {\rm Let $\Lambda$ be a finite-dimensional algebra
over an algebraically closed field given by the quiver:
\[\xymatrix{
1 \circ \ar[r] & \circ 2
  }\]
Put $\omega=I(1)\oplus I(2)$. Then $_{\Lambda}\omega_{\Lambda}$ is a semidualizing bimodule, but non-faithful since $\Hom_{\Lambda}(\omega,S(2))=0$.
We have an exact sequence $0\rightarrow S(2)\rightarrow I(2)\rightarrow I(1)\rightarrow 0$ in $\Mod \Lambda$. Both $I(1)$ and $I(2)$ are obviously
in $\mathcal{B}_\omega(\Lambda)$. But $S(2)$ is not in $\mathcal {B}_\omega(\Lambda)$ because $S(2)$ is not 2-$\omega$-cotorsionfree.}
\end{exa}

Motivated by \cite[Definition 2.4 and Lemma 2.5]{K}, we introduce the following

\begin{df} \label{df: 4.5} {\rm A semidualizing bimodule $_R\omega_S$ is called {\bf left} (resp. {\bf right}) {\bf semi-tilting} if
$\pd_R\omega<\infty$ (resp. $\pd_{S^{op}}\omega<\infty$).}
\end{df}

In the following, we will give an equivalent characterization of right semi-tilting bimodules in terms of the
finiteness of the Bass injective dimension of $_RR$.
We need the following two lemmas.


\begin{lem} \label{lem: 4.6} Let $M\in \Mod R$ with $\mathcal{P}_\omega(R)$-$\id_RM\leqslant n(<\infty)$.
If $K\in \Mod R$ is isomorphic to a direct summand of $M$, then $\mathcal{P}_\omega(R)$-$\id_RK\leqslant n$.
\end{lem}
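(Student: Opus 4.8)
The plan is to exploit the fact that $\mathcal{P}_\omega(R)$-injective dimension, like ordinary injective dimension, can be detected by the vanishing of an Ext-type functor relative to the class $\mathcal{P}_\omega(R) = \Add_R\omega$, and that this vanishing obviously passes to direct summands. More concretely, I would first note that an $\mathcal{P}_\omega(R)$-coresolution of $M$ of length $\leqslant n$ exists by hypothesis, say
$$0 \to M \to \omega_0 \to \omega_1 \to \cdots \to \omega_n \to 0$$
with all $\omega_i \in \Add_R\omega$. The key point I would establish is that $\mathcal{P}_\omega(R)$-$\id_RM \leqslant n$ is equivalent to the condition that in \emph{any} exact sequence $0 \to M \to \omega^0 \to \omega^1 \to \cdots \to \omega^{n-1} \to C \to 0$ with $\omega^i \in \Add_R\omega$, the cokernel $C$ again lies in $\mathcal{P}_\omega(R)$ — or, even more cheaply, that $\mathcal{P}_\omega(R)$ is coresolving (it is closed under cokernels of monomorphisms and under extensions, and contains... well, it is an $\Ext$-projective generator situation) so that a ``relative dimension shift'' argument applies.

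The cleanest route, which I would actually carry out, is to use the characterization via the functor $\Ext^{\geqslant 1}_R(-,\,\cdot\,)$ restricted appropriately: since $\Add_R\omega = \mathcal{P}_\omega(R)$ is closed under direct summands and direct sums, and since by Fact 2.7(1) $\Add_R\omega$ is an $\Ext$-projective generator for $\mathcal{B}_\omega(R)$, one knows that $\mathcal{P}_\omega(R)$-$\id_R(-)$ behaves like a genuine cohomological dimension. In particular I would argue: suppose $M = K \oplus K'$. Take the given coresolution of $M$ of length $n$; applying the functor $\Hom_R(\omega, -)$ (or more precisely the relevant derived functor detecting $\mathcal{P}_\omega(R)$-$\id$), the contravariant additivity lets one split off the summand corresponding to $K$. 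Since a direct summand of an exact complex is exact, and since $\Add_R\omega$ is closed under direct summands, one extracts from the $M$-coresolution a coresolution of $K$ by modules in $\Add_R\omega$ of length $\leqslant n$. The honest subtlety is that direct summands of an \emph{exact} sequence need not be exact in an arbitrary additive category, but in the abelian category $\Mod R$ this is fine: an exact sequence splits into short exact sequences, direct summands of short exact sequences of modules are short exact, and $\Add_R\omega$ is summand-closed, so splicing back gives the desired length-$n$ coresolution of $K$.

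Thus the key steps, in order, are: (i) record that $\mathcal{P}_\omega(R) = \Add_R\omega$ is closed under direct sums and direct summands; (ii) fix a coresolution $0 \to M \to \omega_0 \to \cdots \to \omega_n \to 0$ with $\omega_i \in \Add_R\omega$; (iii) using the (split) inclusion and projection $K \hookrightarrow M \twoheadrightarrow K$, break the coresolution into short exact pieces, take the direct summand of each short exact piece determined by these idempotents, observing that each such summand is again short exact with middle/outer terms in $\Add_R\omega$; (iv) splice these back to obtain an $\Add_R\omega$-coresolution of $K$ of length $\leqslant n$, whence $\mathcal{P}_\omega(R)$-$\id_RK \leqslant n$. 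I expect step (iii) — carefully verifying that the idempotent $e \in \End_R(M)$ (extended by $0$ on $K'$, or rather the idempotent picking out $K$) propagates coherently through the whole coresolution to give compatible idempotents on each term $\omega_i$, and that the resulting summand-sequences are exact — to be the only point requiring genuine care; everything else is formal. An alternative, essentially equivalent, route would be to invoke a result of the form ``$\mathcal{X}$-$\id_R(-)$ is additive on finite direct sums and monotone under summands whenever $\mathcal{X}$ is summand-closed and coresolving'' if such a lemma is available in the cited literature (\cite{H2}), in which case the proof collapses to a one-line citation; but I would include the direct argument above for self-containedness.
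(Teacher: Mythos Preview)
Your direct approach via idempotent propagation has a genuine gap at precisely the step you flag. An idempotent $e$ on $M$ does \emph{not} in general propagate to an idempotent chain endomorphism of the coresolution $0 \to M \to \omega_0 \to \cdots \to \omega_n \to 0$. Using that $\Ext_R^{\geqslant 1}(\omega,\omega)=0$, one can check the coresolution is $\Hom_R(-,\Add_R\omega)$-exact and hence lift $e$ to a chain map $\tilde e$; but the lift is unique only up to chain homotopy, so one obtains merely $\tilde e^2 \simeq \tilde e$, not $\tilde e^2=\tilde e$. A homotopy idempotent cannot simply be ``imaged'' to produce a summand complex with terms in $\Add_R\omega$. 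Put differently: the decomposition $M=K\oplus K'$ gives no decomposition of the short exact sequence $0\to M\to\omega_0\to C_1\to 0$ as a direct sum of short exact sequences, so there is nothing to splice. Your claim that ``direct summands of short exact sequences of modules are short exact'' is true only when the \emph{sequence itself} is a direct summand in the category of complexes, which is exactly what has not been established.

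The paper does what you list as the fallback: it notes $\mathcal{P}_\omega(R)=\Add_R\omega\subseteq{^{\bot}\mathcal{P}_\omega(R)}$, verifies that $\mathcal{P}_\omega(R)$ is $\mathcal{P}_\omega(R)$-coresolving in $\Mod R$ with itself as a $\mathcal{P}_\omega(R)$-coproper cogenerator (Definition~2.9), and then invokes \cite[Corollary 4.9]{H2}. That corollary is a general closure-under-summands statement for this relative injective dimension, and its proof (a Schanuel-type comparison in the relative setting) is where the actual work you tried to do by hand is carried out. So your alternative route is correct and coincides with the paper's argument; your primary route does not go through as written.
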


\begin{proof}
Note that $\mathcal{P}_\omega(R)=\Add_R\omega$ by Proposition 3.4(2). It is clear that $\mathcal{P}_\omega(R)\subseteq {^{\bot}\mathcal{P}_\omega(R)}$.
In addition, it is not difficult to verify that $\mathcal{P}_\omega(R)$ is $\mathcal{P}_\omega(R)$-coresolving in $\Mod R$ with
$\mathcal{P}_\omega(R)$ a $\mathcal{P}_\omega(R)$-coproper cogenerator in the sense of \cite{H2}.
Now the assertion follows from \cite[Corollary 4.9]{H2}.
\end{proof}

We use $\add_R\omega$ to denote the subclass of $\Mod R$ consisting of direct summands of finite direct sums of copies of $\omega$.

\begin{lem} \label{lem: 4.7} Let $M\in \Mod R$ be finitely generated and $n\geqslant 0$. If $\mathcal{P}_\omega(R)$-$\id_RM\leqslant n$,
then there exists an exact sequence:
$$0\to M \to \omega^0 \to \omega^1 \to \cdots \to \omega^n \to 0$$
in $\Mod R$ with all $\omega^i$ in $\add_R\omega$.
\end{lem}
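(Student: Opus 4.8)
The plan is to upgrade a given $\mathcal{P}_\omega(R)$-coresolution of $M$ (which exists and has length $\leqslant n$ by hypothesis) to one whose terms lie in the smaller class $\add_R\omega$, using the finite generation of $M$ to control the sizes. First I would take a short exact sequence $0\to M\to W^0\to N\to 0$ with $W^0\in\mathcal{P}_\omega(R)=\Add_R\omega$ (Proposition 3.4(2)) and $\mathcal{P}_\omega(R)$-$\id_R N\leqslant n-1$, coming from the first step of any $\mathcal{P}_\omega(R)$-coresolution. The key point is to replace $W^0$ by something in $\add_R\omega$: since $W^0\in\Add_R\omega$, it is a direct summand of a (possibly infinite) direct sum $\omega^{(I)}$, but because $M$ is finitely generated, the composite $M\hookrightarrow W^0\hookrightarrow\omega^{(I)}$ factors through a finite subsum $\omega^{(I_0)}$, $I_0$ finite; then I would check that the image of $M$ in $\omega^{(I_0)}$ is still a $\mathcal{P}_\omega(R)$-coproper monomorphism with the relevant cokernel property, or alternatively project $W^0$ onto a finitely generated direct summand $\omega^0\in\add_R\omega$ containing the image of $M$. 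Here I would use that $\omega\in{_R\omega^{\perp}}$ and $\Add_R\omega$ is an $\Ext$-projective generator for $\mathcal{B}_\omega(R)$ (Fact 2.7(1)) so that the relevant $\Ext^1$-obstructions to splitting off the superfluous part vanish.

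Next I would set up an induction on $n$. For $n=0$ the statement says $M\cong\omega^0\in\add_R\omega$, which follows because $\mathcal{P}_\omega(R)$ is closed under direct summands of finite type — more precisely, $M$ finitely generated and $M\in\mathcal{P}_\omega(R)=\Add_R\omega$ forces $M\in\add_R\omega$ by a standard argument (a finitely generated module that is a direct summand of $\omega^{(I)}$ is a direct summand of $\omega^{(I_0)}$ for finite $I_0$). For the inductive step, from $0\to M\to\omega^0\to N\to 0$ with $\omega^0\in\add_R\omega$ I need $N$ to again be finitely generated: this holds since $\omega^0$ is finitely generated ($\omega$ admits a degreewise finite projective resolution, so it is finitely generated) and $N$ is a quotient of $\omega^0$. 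Moreover $\mathcal{P}_\omega(R)$-$\id_R N\leqslant n-1$ by the long exact sequence / dimension shifting, so the induction hypothesis gives an $\add_R\omega$-coresolution $0\to N\to\omega^1\to\cdots\to\omega^n\to 0$ of length $n-1$; splicing it with $0\to M\to\omega^0\to N\to 0$ yields the desired sequence of length $n$.

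The main obstacle, and the step deserving the most care, is the reduction of $W^0$ (or of the relevant injective-type object) from $\Add_R\omega$ to $\add_R\omega$ while preserving the $\mathcal{P}_\omega(R)$-coproper structure needed so that the truncated cokernel $N$ still has $\mathcal{P}_\omega(R)$-$\id_R N\leqslant n-1$. Concretely I would argue: given any monomorphism $M\hookrightarrow W$ with $W\in\Add_R\omega$, finite generation of $M$ lets me write $W\cong\omega^0\oplus W'$ with $\omega^0\in\add_R\omega$ and $M\hookrightarrow\omega^0$ (composing with the projection), and then verify via $\Ext^1_R(-,\omega)$-vanishing on the relevant modules (using $\omega^0, W'\in{_R\omega^{\perp}}$ and Lemma 3.8 / $\mathcal{B}_\omega(R)$ being coresolving) that this does not increase the $\mathcal{P}_\omega(R)$-injective dimension of the cokernel; Lemma 4.6 guarantees that passing to the direct summand $\omega^0$ keeps everything within the class of modules of $\mathcal{P}_\omega(R)$-injective dimension $\leqslant n$. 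Everything else — the long exact sequences, dimension shifting, finite generation of cosyzygies — is routine.
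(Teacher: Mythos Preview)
Your approach is correct and organizes the argument somewhat differently from the paper. The paper does not induct on $n$; instead it takes a fixed $\Add_R\omega$-coresolution $0\to M\to D^0\to\cdots\to D^n\to 0$ and iteratively replaces the terms one at a time: at each step it enlarges $D^i$ to an actual direct sum of copies of $\omega$, factors the finitely generated incoming image through a finite subsum $\omega^i\in\add_R\omega$, and then uses a push-out diagram together with the fact that $\Add_R\omega$ is $\mathcal{P}_\omega(R)$-coresolving (from the proof of Lemma~4.6) to show that the new next term $X^{i+1}$ still lies in $\Add_R\omega$. Your induction on $n$ with Lemma~4.6 applied to the cokernel is arguably cleaner: once you observe that $\coker(M\to\omega^0)$ is a direct summand of $\coker(M\to\omega^{(I)})\cong N\oplus G^0$, Lemma~4.6 gives $\mathcal{P}_\omega(R)$-$\id_R\coker(M\to\omega^0)\leqslant n-1$ immediately, and the inductive hypothesis (legitimate since $\omega^0$, hence its quotient, is finitely generated) finishes. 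Both routes rest on the same structural fact behind Lemma~4.6.

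Two small points to tighten in your write-up. First, in your ``concretely'' paragraph you assert $W\cong\omega^0\oplus W'$ with $\omega^0\in\add_R\omega$ containing the image of $M$; for an arbitrary $W\in\Add_R\omega$ such a decomposition need not exist inside $W$ itself --- you must first add a complement to pass to a genuine direct sum $\omega^{(I)}$, exactly as you (correctly) described a few lines earlier. Second, the references to $\mathcal{P}_\omega(R)$-coproperness, $\Ext^1$-vanishing, and Lemma~3.8 are red herrings here: once the new cokernel is identified as a direct summand of $N\oplus G^0$, Lemma~4.6 alone handles the dimension bound, and no $\Hom_R(-,\mathcal{P}_\omega(R))$-exactness needs to be checked in your version of the argument.
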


\begin{proof} Let $\mathcal{P}_\omega(R)$-$\id_RM\leqslant n$ and
$$0\to M \stackrel{\alpha^{0}}{\longrightarrow} D^0 \stackrel{\alpha^{1}}{\longrightarrow} D^1
\stackrel{\alpha^{2}}{\longrightarrow} \cdots \stackrel{\alpha^{n}}{\longrightarrow} D^n \to 0\eqno{(4.1)}$$
be an exact sequence in $\Mod R$ with all $D^i$ in $\Add_R\omega$ ($=\mathcal{P}_\omega(R)$). Put $K^i=\Im \alpha^i$
for any $0\leqslant i \leqslant n$. There exists a module $G^0\in \Add_R\omega$ such that $D^0\oplus G^0$ is a
direct sum of copies of $\omega$, so we get a $\Hom_R(-,\mathcal{P}_\omega(R))$-exact exact sequence:
$$0\to M \stackrel{\beta^0}{\longrightarrow} D^0\oplus G^0 \stackrel{\beta^1}{\longrightarrow} D^1\oplus G^0
\stackrel{\beta^{2}}{\longrightarrow} D^2 \stackrel{\alpha^{3}}{\longrightarrow} \cdots \stackrel{\alpha^{n}}{\longrightarrow} D^n \to 0,$$
where $\beta^0={\binom {\alpha^0} {0}}$, $\beta^1={{\tiny \left(\begin{array}{cc} \alpha^1 & 0 \\ 0 & 1_{G^0}\end{array}\right)}}$
and $\beta^2=(\alpha^2, 0)$. Then $\Im \beta^1=K^1\oplus G^0$ and $\Im \beta^2=K^2$. Because $M$ is finitely generated by assumption,
there exist $\omega^0\in \add_R\omega$ and $H^0\in \Add_R\omega$ such that $D^0\oplus G^0=\omega^0\oplus H^0$ and $\Im \alpha^0\subseteq \omega^0$. So
we get an exact sequence:
$$0\to M \to \omega^0 \to L^0 \to 0\eqno{(4.2)}$$
in $\Mod R$ with $L^0\oplus H^0=\Im \beta^1$.

Consider the following push-out diagram with the middle row $\Hom_R(-,\mathcal{P}_\omega(R))$-exact exact
and the leftmost column splitting:
$$\xymatrix@C=20pt@R=15pt {
&0\ar[d]&0\ar[d]\\
& H^0\ar[d]\ar@{=}[r]& H^0\ar[d]\\
0\ar[r]&\Im \beta^1\ar[r]\ar[d]&D^1\oplus G^0\ar[d]\ar[r]& K^2\ar[r]\ar@{=}[d]&0\\
0\ar[r]& L^0\ar[d]\ar[r]&X^1\ar[d]\ar[r]& K^2 \ar[r]&0\\
&0&0.}\ \ \ $$
Then the middle column in the above diagram is $\Hom_R(-,\mathcal{P}_\omega(R))$-exact exact. From the proof of Lemma 4.6, we know that
$\Add_R\omega$ ($=\mathcal{P}_\omega(R)$) is $\mathcal{P}_\omega(R)$-coresolving in $\Mod R$. So $X^1\in \Add_R\omega$. Combining the exact sequences (4.1), (4.2)
with the bottom row in the above diagram, we get an exact sequence:
$$0\to M \to \omega^0 \to X^1 \to D^2 \stackrel{\alpha^{3}}{\longrightarrow} \cdots \stackrel{\alpha^{n}}{\longrightarrow} D^n \to 0$$
in $\Mod R$ with $\omega^0\in \add_R\omega$ and $X^1\in \Add_R\omega$. Repeating the above argument with $\Im (\omega^0\to X^1)$ replacing $M$, we
get an exact sequence:
$$0\to M \to \omega^0 \to \omega^1 \to X^2 \to D^3 \stackrel{\alpha^{4}}{\longrightarrow} \cdots \stackrel{\alpha^{n}}{\longrightarrow} D^n \to 0$$
in $\Mod R$ with $\omega^0,\omega^1\in \add_R\omega$ and $X^2\in \Add_R\omega$. Continuing this procedure, we finally get
an exact sequence:
$$0\to M \to \omega^0 \to \omega^1 \to \cdots \to \omega^n \to 0$$
in $\Mod R$ with all $\omega^i$ in $\add_R\omega$.
\end{proof}

We are now in a position to prove the following

\begin{thm} \label{thm: 4.8}
\begin{enumerate}
\item[]
\item If $_R\omega_S$ is right semi-tilting, then $\mathcal{B}_\omega(R)={_R\omega^{\perp}}$.
\item If $S$ is a left coherent ring, then $_R\omega_S$ is right semi-tilting with $\pd_{S^{op}}\omega\leqslant n$
if and only if $\mathcal{B}_\omega(R)$-$\id_RR\leqslant n$.
\end{enumerate}
\end{thm}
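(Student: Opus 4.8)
For part (1), the plan is to combine Corollary 3.2(2) with the observation that right semi-tilting means exactly $\pd_{S^{op}}\omega<\infty$. Indeed, by Definition 4.5 the hypothesis $_R\omega_S$ right semi-tilting says $\pd_{S^{op}}\omega<\infty$, so Corollary 3.2(2) immediately yields $\mathcal{B}_\omega(R)={_R\omega^{\perp}}$. This part requires essentially no new work.

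For part (2), I would prove the two implications separately. For the ``only if'' direction, assume $S$ is left coherent and $\pd_{S^{op}}\omega\leqslant n$. First note that $R={_R\omega^{\perp}}$ trivially (it is projective, hence has no higher self-$\Ext$... more precisely $\Ext^{\geqslant 1}_R(\omega,R)$ need not vanish, so instead I will argue via the resolution). The key point is that $\fd_{S^{op}}\omega=\pd_{S^{op}}\omega\leqslant n$ since $\omega$ is finitely presented over $S$; taking a projective resolution $0\to P_n\to\cdots\to P_0\to \omega\to 0$ of $\omega$ as a right $S$-module with the $P_i$ finitely generated projective, apply $\Hom_{S^{op}}(-,S)$... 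Actually the cleaner route: since $_RR={_R\Hom_{S^{op}}(\omega,\omega)}$ by (b1), I would build a coresolution of $_RR$ by modules in $\mathcal{P}_\omega(R)=\Add_R\omega$ (Proposition 3.4(2)) of length $\leqslant n$, by applying $\omega\otimes_S-$ to (the $S$-dual of) a length-$n$ projective resolution of $\omega_S$. Concretely, dualizing $0\to P_n\to\cdots\to P_0\to\omega_S\to 0$ into $\Hom_{S^{op}}(-,S)$ and using that $\omega_S$ is reflexive / that $\Ext^{\geqslant 1}_{S^{op}}(\omega,S)$ controls the terms, one gets an exact sequence of left $S$-modules exhibiting $_S S$ (hence via $\omega\otimes_S-$ and $\omega\otimes_S S\cong\omega$, $\mu$ an isomorphism) as having a $\mathcal{P}_\omega(R)$-coresolution; then $\mathcal{P}_\omega(R)\subseteq\mathcal{B}_\omega(R)$ gives $\mathcal{B}_\omega(R)$-$\id_R R\leqslant n$. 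For the ``if'' direction, assume $\mathcal{B}_\omega(R)$-$\id_R R\leqslant n$. Since $R\in{_R\omega^{\perp}}$ is false in general, I instead use Theorem 4.2: with $M=R$ (which admits a degreewise finite $R$-projective resolution trivially), condition (1) gives, via (5), an exact sequence $0\to X_R\to W_R\to R\to 0$ with $X_R\in\mathcal{B}_\omega(R)$ and $\mathcal{P}_\omega(R)$-$\id_R W_R\leqslant n$; as $R$ is projective this sequence splits, so $R$ is a direct summand of $W_R$, whence by Lemma 4.6 $\mathcal{P}_\omega(R)$-$\id_R R\leqslant n$. Then Lemma 4.7 applies (with $M=R$ finitely generated), giving an exact sequence $0\to R\to \omega^0\to\cdots\to\omega^n\to 0$ with all $\omega^i\in\add_R\omega$. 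Applying $(-)_*=\Hom_R(\omega,-)$ and using that $\Ext^{\geqslant 1}_R(\omega,\omega)=0$ with $\omega_*\cong S$, this sequence stays exact and exhibits a length-$n$ resolution of... one then needs that $\Hom_R(\omega,\omega^i)\in\add_{S^{op}}$ something computing $\pd_{S^{op}}\omega$. The precise finish: apply instead $\Hom_R(-,\omega)$ or tensor considerations to relate the $\add_R\omega$-coresolution of $R$ to a finite projective resolution of $\omega_S$, using left coherence of $S$ to ensure finitely presented modules behave well.

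The main obstacle I anticipate is the bookkeeping in the ``if'' direction: extracting $\pd_{S^{op}}\omega\leqslant n$ from the finite $\add_R\omega$-coresolution $0\to R\to\omega^0\to\cdots\to\omega^n\to 0$ of $_RR$. The natural move is to apply a duality functor; since $(b1)$ gives $R\cong\Hom_{S^{op}}(\omega,\omega)$ and each $\omega^i$ is in $\add_R\omega$, one hopes $\Hom_R(\omega^i,\omega)$ or the $S$-dual produces finitely generated projective right $S$-modules, turning the coresolution into a finite projective resolution of $\omega_S$. Left coherence of $S$ is presumably needed precisely to guarantee that the relevant $\Hom$ or kernel modules are finitely presented so the induction on $n$ closes. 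I would handle this by dimension shifting: set $C=\coOmega^1$ of the coresolution, show $C\in\mathcal{B}_\omega(R)$ with a shorter coresolution, and track the corresponding right $S$-module through $(-)_*$, using Lemma 3.8-type $\Ext$-comparison isomorphisms together with the vanishing $\Ext^{\geqslant 1}_R(\omega,\omega)=0$ and $\Tor^S_{\geqslant 1}(\omega,-)=0$ on the relevant modules. The coherence hypothesis and the finite-generation hypotheses in Lemmas 4.6 and 4.7 are what make this induction legitimate, so I would be careful to verify at each stage that the modules produced remain finitely presented.
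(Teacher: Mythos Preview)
Your approach matches the paper's. Part (1) is exactly Corollary~3.2(2), and for the ``if'' direction of part (2) the paper does precisely what you outline: Theorem~4.2(5) gives $0\to X\to W\to R\to 0$, which splits since $_RR$ is projective; then Lemma~4.6 yields $\mathcal{P}_\omega(R)$-$\id_R R\leqslant n$, and Lemma~4.7 produces $0\to R\to\omega^0\to\cdots\to\omega^n\to 0$ with all $\omega^i\in\add_R\omega$.

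You are overthinking the finish of the ``if'' direction. Your first instinct---apply $\Hom_R(-,\omega)$---is correct and immediately conclusive. Since each $\omega^i\in\add_R\omega$ and $\Ext_R^{\geqslant 1}(\omega,\omega)=0$, the functor $\Hom_R(-,\omega)$ keeps the sequence exact; moreover $\Hom_R(\omega^i,\omega)$ is a direct summand of $\Hom_R(\omega^m,\omega)\cong S^m$ (by (b2)), hence finitely generated projective in $\Mod S^{op}$, and $\Hom_R(R,\omega)\cong\omega$. This is already a length-$n$ projective resolution of $\omega_S$. No induction, no dimension shifting, no Lemma~3.8, and no coherence hypothesis is needed here: the entire point of invoking Lemma~4.7 (rather than stopping at $\Add_R\omega$) was to land in $\add_R\omega$ so that $\Hom_R(-,\omega)$ produces finitely generated projectives. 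Drop the last paragraph of your proposal.

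For the ``only if'' direction the paper simply records the inequality $\mathcal{B}_\omega(R)$-$\id_R R\leqslant\mathcal{P}_\omega(R)$-$\id_R R=\pd_{S^{op}}\omega$ as clear. Your dualization sketch (apply $\Hom_{S^{op}}(-,\omega)$ to a finite resolution of $\omega_S$ by finitely generated projectives, using (b1) and (c2)) is the standard way to justify $\mathcal{P}_\omega(R)$-$\id_R R\leqslant\pd_{S^{op}}\omega$. If the left-coherence hypothesis on $S$ is used at all, it is in this step---to guarantee that a truncation of the degreewise-finite projective resolution (a2) of $\omega_S$ at stage $n$ still has finitely generated terms---not in the ``if'' direction where you placed it.
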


\begin{proof} (1) It follows from Corollary 3.2 and \cite[Theorem 3.9]{TH}.

(2) It is easy to see that $\mathcal{B}_\omega(R)$-$\id_RR\leqslant \mathcal{P}_\omega(R)$-$\id_RR=\pd_{S^{op}}\omega$. Now the necessity is clear.
Conversely, if $\mathcal{B}_\omega(R)$-$\id_RR=n<\infty$, then by Theorem 4.2, there exists a split exact sequence:
$$0\rightarrow X\rightarrow W\rightarrow R\rightarrow 0$$ in $\Mod R$ such that $X\in \mathcal{B}_\omega(R)$ and $\mathcal{P}_\omega(R)$-$\id_R{W}\leqslant n$.
So $W\cong X\oplus R$ and $\mathcal{P}_\omega(R)$-$\id_R{R}\leqslant n$ by Lemma 4.6. It follows from Lemma 4.7 that there exists an exact sequence:
$$0\to R \to \omega^0 \to \omega^1 \to \cdots \to \omega^n \to 0$$
in $\Mod R$ with all $\omega^i$ in $\add_R\omega$. Applying the functor $\Hom_R(-,\omega)$ to it we get the following exact
sequence:
$$0\to \Hom_R(\omega^n,\omega)\to \cdots \to \Hom_R(\omega^1,\omega) \to \Hom_R(\omega^0,\omega) \to \omega\to 0$$
in $\Mod S^{op}$ with all $\Hom_R(\omega^i,\omega)$ projective.
So $_R\omega_S$ is right semi-tilting with $\pd_{S^{op}}\omega\leqslant n$.
\end{proof}

Compare the following result with Lemma 3.9.

\begin{cor} \label{cor: 4.9} If $_R\omega_S$ is left and right semi-tilting, then
for every $M\in\Mod R$, $\mathcal{B}_\omega(R)$-$\id_RM=\sup\{i\geqslant 0\mid\Ext_R^i(\omega,M)\neq 0\}<\infty$.
\end{cor}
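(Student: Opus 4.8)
The plan is to deduce Corollary 4.9 from the two halves of Theorem 4.8 together with Lemma 3.9 and the machinery on $\mathcal{P}_\omega(R)$-injective dimension. First I would observe that since $_R\omega_S$ is left semi-tilting, $\pd_R\omega<\infty$, and since it is right semi-tilting, $\pd_{S^{op}}\omega<\infty$; by Corollary 3.2 both $\mathcal{B}_\omega(R)={\rm c}\mathcal{T}(R)$ and $\mathcal{B}_\omega(R)={_R\omega^{\perp}}$ hold, so in this situation ${\rm c}\mathcal{T}(R)={_R\omega^{\perp}}=\mathcal{B}_\omega(R)$. Next, for a fixed $M\in\Mod R$ I would show $\mathcal{B}_\omega(R)$-$\id_RM<\infty$: by Theorem 4.8(2) applied with $S$ left coherent—wait, coherence is not assumed here, so instead I would argue directly that $\mathcal{B}_\omega(R)$-$\id_RM\leqslant\pd_R\omega<\infty$. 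Indeed, taking a minimal injective resolution of $M$, each cosyzygy $\coOmega^i(M)$ has the property that $\Ext_R^{\geqslant 1}(\omega,\coOmega^{\pd_R\omega}(M))=0$ by dimension shifting along $\pd_R\omega$ syzygies, hence $\coOmega^{\pd_R\omega}(M)\in{_R\omega^{\perp}}=\mathcal{B}_\omega(R)$, which gives the bound.

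With finiteness in hand I would invoke Theorem 4.2: the equivalence (1)$\Leftrightarrow$(3) says that for $n\geqslant 0$, $\mathcal{B}_\omega(R)$-$\id_RM\leqslant n$ if and only if $\Ext_R^{\geqslant n+1}(\omega,M)=0$. Setting $s(M):=\sup\{i\geqslant 0\mid\Ext_R^i(\omega,M)\neq 0\}$, this equivalence immediately yields $\mathcal{B}_\omega(R)$-$\id_RM=s(M)$: taking $n=s(M)$ gives $\mathcal{B}_\omega(R)$-$\id_RM\leqslant s(M)$, and conversely if $\mathcal{B}_\omega(R)$-$\id_RM=m$ then $\Ext_R^{\geqslant m+1}(\omega,M)=0$ so $s(M)\leqslant m$ (the case $s(M)=\infty$ cannot occur by the finiteness just established, and the degenerate cases where $M\in\mathcal{B}_\omega(R)$, i.e. $s(M)=0$, are handled the same way). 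This is essentially a formal consequence once one notes that Theorem 4.2 is available precisely because $\mathcal{B}_\omega(R)$-$\id_RM<\infty$.

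The main obstacle is making sure the hypotheses of the cited results are genuinely met: Theorem 4.2 requires $\mathcal{B}_\omega(R)$-$\id_RM<\infty$, which is why establishing the uniform bound $\mathcal{B}_\omega(R)$-$\id_RM\leqslant\pd_R\omega$ from the left semi-tilting assumption is the real content—without it the statement "$<\infty$" in the corollary would be unjustified. I would phrase this step carefully: since $\pd_R\omega=:t<\infty$, for any injective module $I$ we have $\Ext_R^{\geqslant 1}(\omega,I)=0$, and dimension shifting along the minimal injective resolution of $M$ gives $\Ext_R^{i}(\omega,\coOmega^{t}(M))\cong\Ext_R^{i+t}(\omega,M)$; but also $\Ext_R^{j}(\omega,N)=0$ for all $j\geqslant 1$ and all $N$ once $j>t=\pd_R\omega$ by the standard vanishing of $\Ext$ above projective dimension, hence $\coOmega^t(M)\in{_R\omega^{\perp}}=\mathcal{B}_\omega(R)$ and $\mathcal{B}_\omega(R)$-$\id_RM\leqslant t$. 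Everything else is routine bookkeeping with Theorem 4.2(1)$\Leftrightarrow$(3).
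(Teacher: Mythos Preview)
Your proposal is correct. The argument matches the paper's in its essential ingredients: both use the identification $\mathcal{B}_\omega(R)={_R\omega^{\perp}}$ coming from the right semi-tilting hypothesis (you cite Corollary~3.2, the paper cites the equivalent Theorem~4.8(1)), and both use $\pd_R\omega<\infty$ to force the finiteness of $s(M)=\sup\{i\geqslant 0\mid \Ext_R^i(\omega,M)\neq 0\}$.

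The only organizational difference is in how the equality is extracted. You first establish the uniform bound $\mathcal{B}_\omega(R)\text{-}\id_RM\leqslant \pd_R\omega$ and then invoke the equivalence $(1)\Leftrightarrow(3)$ of Theorem~4.2, which immediately gives $\mathcal{B}_\omega(R)\text{-}\id_RM=s(M)$. The paper instead argues $\mathcal{B}_\omega(R)\text{-}\id_RM\geqslant s(M)$ directly from $\mathcal{B}_\omega(R)={_R\omega^{\perp}}$ (any ${_R\omega^{\perp}}$-coresolution forces $\Ext$ to vanish in high degrees), and then proves $\mathcal{B}_\omega(R)\text{-}\id_RM\leqslant s(M)$ by a short induction on $s(M)$ using cosyzygies. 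That induction is precisely the content of Theorem~4.2 $(3)\Rightarrow(1)$ specialized to this situation, so the two routes are really the same argument packaged differently; your version is a bit cleaner since it avoids repeating work already done in Theorem~4.2. Your side remarks about Lemma~3.9 and Theorem~4.8(2) are not needed and can be dropped.
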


\begin{proof} Let $_R\omega_S$ be left and right semi-tilting. Then $\pd_R\omega<\infty$ and $\pd_{S^{op}}\omega<\infty$. Put
$\sup\{i\geqslant 0\mid\Ext_R^i(\omega,M)\neq 0\}=n$. Then $n<\infty$. It is easy to see that $_R\omega^{\bot}$-$\id_RM\geqslant n$.
So $\mathcal{B}_\omega(R)$-$\id_RM\geqslant n$ by Theorem 4.8(1).

We will use induction on $n$ to prove $\mathcal{B}_\omega(R)$-$\id_RM\leqslant n$.
If $n=0$, then $M\in{{_R\omega}^{\bot}}$. It follows from Theorem 4.8(1) that $M\in\mathcal{B}_\omega(R)$.
Now suppose $n\geqslant 1$. Then $\sup\{i\geqslant 0\mid\Ext_R^i(\omega,\coOmega^1(M))\neq 0\}=n-1$.
So $\mathcal{B}_\omega(R)$-$\id_R\coOmega^1(M)=n-1$ by the induction hypothesis, and hence
 $\mathcal{B}_\omega(R)$-$\id_RM\leqslant n$.
\end{proof}

\section {\bf The Bass Injective Dimension of Complexes}

In this section, we extend the Bass injective dimension of modules to that of complexes in derived categories. A {\bf cochain complex} $M^\bullet$
is a sequence of modules and morphisms in $\Mod R$ of the form:
$$\cdots \rightarrow M^{n-1}\stackrel{d^{n-1}}{\longrightarrow}M^n\stackrel{d^{n}}{\longrightarrow}M^{n+1}\rightarrow \cdots$$
such that $d^{n}d^{n-1} = 0$ for any $n\in \mathbb{Z}$, and the {\bf shifted complex} $M^\bullet[m]$ is the complex with
$M^{\bullet}[m]^n=M^{m+n}$ and $d^n_{M^{\bullet}[m]}=(-1)^md_{m+n}^n$. Any $M\in\Mod R$ can be considered as a complex
having $M$ in its 0-th spot and 0 in its other spots. We use $\mathbf{C}(R)$ and $\mathbf{D}^b(R)$
to denote the category of cochain complexes and the derived category of complexes with bounded
finite homologies of $\Mod R$, respectively. According to \cite[Appendix]{Lar}, the \textbf{supremum},
the \textbf{infimum} and the \textbf{amplitude} of a complex $M^\bullet$ are defined as follows:
$$\sup M^\bullet=\sup\{n\in \mathbb{Z}\mid H^n(M^\bullet) \neq 0\},$$
$$\inf M^\bullet=\inf\{n\in \mathbb{Z}\mid H^n(M^\bullet) \neq 0\},$$
$$\amp M^\bullet=\sup M^\bullet-\inf M^\bullet.$$

The Auslander category with respect to a dualizing complex was defined in \cite{Lar1}.
Dually we define the Bass class of complexes with respect to $\omega$ as follows.

\begin{df} \label{df: 5.1} {\rm A full subcategory $\mathcal{B}^{\bullet}_\omega(R)$ of $\mathbf{D}^b(R)$ consisting of complexes
$M^\bullet$ is called the {\bf Bass class} with respect to $\omega$ if the following conditions are satisfied:
\begin{enumerate}
\item ${\mathbf{R}\Hom_R(\omega,M^\bullet)}\in \mathbf{D}^b(R)$.
\item $\omega\otimes_S^{\mathbf{L}}{\mathbf{R}\Hom_R(\omega,M^\bullet)}\rightarrow M^\bullet$ is an isomorphism in $\mathbf{D}^b(R)$.
\end{enumerate}}
\end{df}

Let $M^\bullet\in \mathbf{C}(R)$ and $n\in \mathbb{Z}$. The \textbf{hard left-truncation} $\sqsubset^n$$M^\bullet$ of $M^\bullet$ at $n$ is given by:
$$\sqsubset^nM^\bullet:=\cdots \rightarrow 0\rightarrow 0\rightarrow M^n \stackrel{d^{n}}{\longrightarrow}M^{n+1}\stackrel{d^{n+1}}{\longrightarrow
}M^{n+2}\rightarrow\cdots.$$
Let $M^{\bullet}$ $\in \mathbf{D}^b(R)$ with $H(M^{\bullet})\neq 0$ and $\inf M^{\bullet}=i$. Taking an injective resolution $I^\bullet$ of $M^\bullet$. We define
the {\bf injective complex} $vI^\bullet=(\sqsubset^{i+1}$$I^\bullet)[1]$, which is unique up to an injective summand in degree $i$. In general,
we have that $H^t(vI^\bullet)\cong H^t(I^\bullet[1])$ if $t\geqslant i+1$. In particular, when $M^{\bullet}$ is a module $M$,
$vI^\bullet$ is isomorphic to $\coOmega^{1}(M)$ in $\mathbf{D}^b(R)$.

\begin{rem}\label{rem:5.2} {\rm (1) Let $M^\bullet\in \mathbf{D}^b(R)$. We see from the definition of $vI^\bullet$ that
there exists a distinguished triangle in $\mathbf{D}^b(R)$ of the form:
$$vI^\bullet[-1]\rightarrow M^{\bullet}\rightarrow I^i[-i]\rightarrow vI^\bullet.$$

(2) It is routine to check that $\mathcal{B}^{\bullet}_\omega(R)$ forms a triangulated subcategory of
$\mathbf{D}^b(R)$. Thus for an injective complex $I^\bullet$, $I^\bullet\in \mathcal{B}^{\bullet}_\omega(R)$
if and only if $vI^\bullet\in \mathcal{B}^{\bullet}_\omega(R)$.}
\end{rem}

\begin{lem} \label{lem: 5.3}  Let $M\in\Mod R$. Then the following statements are equivalent.
\begin{enumerate}
\item $\mathcal{B}_\omega(R)$-$\id_RM< \infty$.
\item $M\in \mathcal{B}^{\bullet}_\omega(R)$.
\end{enumerate}
\end{lem}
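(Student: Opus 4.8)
The plan is to pass between the module version $\mathcal{B}_\omega(R)$-$\id_R M<\infty$ and the complex version $M\in\mathcal{B}^\bullet_\omega(R)$ through cosyzygies, the key bridge being the following pair of observations for an arbitrary $N\in\Mod R$: (a) if $N\in\mathcal{B}_\omega(R)$ then $N\in\mathcal{B}^\bullet_\omega(R)$; and (b) if $N\in\mathcal{B}^\bullet_\omega(R)$ and $N\in{_R\omega^{\perp}}$ then $N\in\mathcal{B}_\omega(R)$. For (a): conditions (B1) and (B2) of Definition 2.4 say $\Ext_R^{\geqslant 1}(\omega,N)=0$ and $\Tor^S_{\geqslant 1}(\omega,N_*)=0$, so $\mathbf{R}\Hom_R(\omega,N)$ is represented by the module $N_*$ placed in degree $0$ and $\omega\otimes_S^{\mathbf{L}}\mathbf{R}\Hom_R(\omega,N)$ by $\omega\otimes_S N_*$ placed in degree $0$; under these identifications the comparison morphism in Definition 5.1(2) becomes the valuation map $\theta_N$, which is an isomorphism by (B3), so $N\in\mathcal{B}^\bullet_\omega(R)$. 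For (b): the hypothesis $N\in{_R\omega^{\perp}}$ again collapses $\mathbf{R}\Hom_R(\omega,N)$ to $N_*$ in degree $0$, so condition 5.1(2) says exactly that $\omega\otimes_S^{\mathbf{L}}N_*\to N$ is an isomorphism in the derived category; taking homology yields $\Tor^S_{\geqslant 1}(\omega,N_*)=0$ and, in degree $0$, that $\theta_N$ is an isomorphism, i.e. (B1)--(B3) all hold.

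Granting (a) and (b), I would first note that every injective module lies in $\mathcal{B}^\bullet_\omega(R)$: it lies in $\mathcal{B}_\omega(R)$ since that class is coresolving (Fact 2.7(1)), hence in $\mathcal{B}^\bullet_\omega(R)$ by (a). Since $\mathcal{B}^\bullet_\omega(R)$ is a triangulated subcategory of $\mathbf{D}^b(R)$ (Remark 5.2(2)), applying the two-out-of-three property to the distinguished triangle $\coOmega^j(M)\to I^j(M)\to\coOmega^{j+1}(M)\to\coOmega^j(M)[1]$ arising from $0\to\coOmega^j(M)\to I^j(M)\to\coOmega^{j+1}(M)\to 0$ gives, by induction on $j$, that $M\in\mathcal{B}^\bullet_\omega(R)$ if and only if $\coOmega^n(M)\in\mathcal{B}^\bullet_\omega(R)$ for one (equivalently, every) $n\geqslant 0$.

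For $(1)\Rightarrow(2)$ I would induct on $d=\mathcal{B}_\omega(R)$-$\id_R M$: if $d=0$ apply (a); if $d\geqslant 1$, choose an exact sequence $0\to M\to B\to C\to 0$ with $B\in\mathcal{B}_\omega(R)$ and $\mathcal{B}_\omega(R)$-$\id_R C\leqslant d-1$ (split off the first term of a shortest $\mathcal{B}_\omega(R)$-coresolution), so that $B\in\mathcal{B}^\bullet_\omega(R)$ by (a), $C\in\mathcal{B}^\bullet_\omega(R)$ by induction, and the triangle $M\to B\to C\to M[1]$ with Remark 5.2(2) forces $M\in\mathcal{B}^\bullet_\omega(R)$. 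For $(2)\Rightarrow(1)$: if $M\in\mathcal{B}^\bullet_\omega(R)$ then condition 5.1(1) says $\mathbf{R}\Hom_R(\omega,M)$ has bounded homology, so $\Ext_R^i(\omega,M)=0$ for $i\geqslant n+1$ for some $n$; dimension shifting then gives $\coOmega^n(M)\in{_R\omega^{\perp}}$, while the previous paragraph gives $\coOmega^n(M)\in\mathcal{B}^\bullet_\omega(R)$, so $\coOmega^n(M)\in\mathcal{B}_\omega(R)$ by (b). Truncating the minimal injective resolution of $M$ at $\coOmega^n(M)$ exhibits a coresolution $0\to M\to I^0(M)\to\cdots\to I^{n-1}(M)\to\coOmega^n(M)\to 0$ with all terms in $\mathcal{B}_\omega(R)$, whence $\mathcal{B}_\omega(R)$-$\id_R M\leqslant n<\infty$.

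The step I expect to be the real obstacle is the identification, inside (a) and (b), of the comparison morphism of Definition 5.1(2) with the classical valuation map $\theta_M$ in the case where $M$ is a module satisfying $\Ext_R^{\geqslant 1}(\omega,M)=0$ and $\Tor^S_{\geqslant 1}(\omega,M_*)=0$: this amounts to checking the compatibility of the derived counit of the adjunction $(\omega\otimes_S-,\Hom_R(\omega,-))$ with its underived counterpart, which is standard but must be done with some care since both directions rest on it. Everything else reduces to routine manipulation of distinguished triangles and cosyzygies.
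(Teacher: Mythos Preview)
Your proposal is correct and follows essentially the same route as the paper: for $(1)\Rightarrow(2)$ the paper also uses that $\mathcal{B}^{\bullet}_\omega(R)$ is a triangulated subcategory (Remark 5.2(2)) to absorb a finite $\mathcal{B}_\omega(R)$-coresolution, and for $(2)\Rightarrow(1)$ the paper likewise passes to a cosyzygy $\coOmega^{s}(M)$ with $s=\sup\mathbf{R}\Hom_R(\omega,M)$, observes $\coOmega^{s}(M)\in{_R\omega^{\perp}}\cap\mathcal{B}^{\bullet}_\omega(R)$, and then reads off (B1)--(B3) exactly as in your observation (b). Your explicit isolation of (a) and (b) and the inductive unpacking of $(1)\Rightarrow(2)$ merely make visible what the paper handles by a citation to \cite{HJR}; the identification of the derived counit with $\theta_N$ that you flag is also the step the paper takes for granted when it writes ``equivalently $\omega\otimes_S{\coOmega^{s}(M)}_*\cong \coOmega^{s}(M)$ and $\Tor^S_{i\geqslant 1}(\omega,{\coOmega^{s}(M)}_*)=0$''.
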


\begin{proof} (1) $\Rightarrow$ (2) Let $\mathcal{B}_\omega(R)$-$\id_RM< \infty$ and
$$0\rightarrow M\rightarrow Y^0\rightarrow Y^1\rightarrow \cdots \rightarrow Y^n\rightarrow 0$$
be an exact sequence in $\Mod R$ with all $Y^i$ in $\mathcal{B}_\omega(R)$. Then by Remark 5.2(2) and \cite[p.41, Corollary 7.22]{HJR},
we have $M\in \mathcal{B}^{\bullet}_\omega(R)$.

(2) $\Rightarrow$ (1) Let $M\in \mathcal{B}^{\bullet}_\omega(R)$ and $I^\bullet$ be an injective resolution of $M$.
Then $I^\bullet\in \mathcal{B}^{\bullet}_\omega(R)$ and $\mathbf{R}\Hom_R(\omega,M)\in \mathbf{D}^b(R)$. Put $s=\sup{\mathbf{R}\Hom_R(\omega,M)}$.
Because $H^i(\mathbf{R}\Hom_R(\omega,v^sI^\bullet))\cong H^{i+s}(\mathbf{R}\Hom_R(\omega,I^\bullet))=0$ for any $i\geqslant 1$.
It implies that $\coOmega^{s}(M)\in{_R\omega^{\bot}}$. By Remark 5.2(2) we have that $v^sI^\bullet\cong\coOmega^{s}(M)$ and
$v^sI^\bullet\in \mathcal{B}^{\bullet}_\omega(R)$, so $\coOmega^{s}(M)\in \mathcal{B}^{\bullet}_\omega(R)$, and hence
$\omega\otimes_S^{\mathbf{L}}\mathbf{R}\Hom_R(\omega,\coOmega^{s}(M))\rightarrow \coOmega^{s}(M)$ is an isomorphism in $\mathbf{D}^b(R)$,
equivalently $\omega\otimes_S{\coOmega^{s}(M)}_*\cong \coOmega^{s}(M)$ and $\Tor^S_{i\geqslant 1}(\omega,{\coOmega^{s}(M)}_*)=0$.
It follows that $\coOmega^{s}(M)\in \mathcal{B}_\omega(R)$ and $\mathcal{B}_\omega(R)$-$\id_RM\leqslant s$.
\end{proof}

We define the Bass injective dimension of complexes in $\mathbf{D}^b(R)$ as follows.

\begin{df} \label{df: 5.4} {\rm Let $M^\bullet$ be a complex in $\mathbf{D}^b(R)$. We define the
{\bf Bass injective dimension} of $M^{\bullet}$ as
\begin{displaymath}
\mathcal {B}^{\bullet}_\omega(R){\text -}\id M^{\bullet}:=\left\{ \begin{array}{ll}
\sup\mathbf{R}\Hom_R(\omega,M^{\bullet}) & \textrm{if $M^{\bullet}\in \mathcal {B}^{\bullet}_\omega(R)$,}\\
+\infty & \textrm{if $M^{\bullet}\notin \mathcal {B}^{\bullet}_\omega(R)$.}\\
\end{array} \right.
\end{displaymath}}
\end{df}

In the following result, we give an equivalent characterization when the Bass injective dimension of complexes is finite.

\begin{thm} \label{thm: 5.5}  Let $M^\bullet$ be a complex in $\mathbf{D}^b(R)$. Then the following statements are equivalent.
\begin{enumerate}
\item $\mathcal{B}^{\bullet}_\omega(R)$-$\id M^{\bullet}<\infty$.
\item There exists an isomorphism $M^{\bullet}\rightarrow Y^{\bullet}$ in $\mathbf{D}^b(R)$ with $Y^{\bullet}$
a bounded complex consisting of modules in $\mathcal{B}_\omega(R)$.
\end{enumerate}
\end{thm}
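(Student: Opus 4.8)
The plan is to prove the two implications separately, the common thread being that $\mathcal{B}^{\bullet}_\omega(R)$ is a triangulated subcategory of $\mathbf{D}^b(R)$ (Remark 5.2(2)) which, by Lemma 5.3, contains every module lying in $\mathcal{B}_\omega(R)$, since such a module has Bass injective dimension $0$. Hence $\mathcal{B}^{\bullet}_\omega(R)$ contains every bounded complex all of whose terms lie in $\mathcal{B}_\omega(R)$: one argues by induction on the number of nonzero terms, peeling off the top term $Y^b$ by the brutal truncation, which yields a short exact sequence $0\to Y^b[-b]\to Y^{\bullet}\to\sigma^{<b}Y^{\bullet}\to 0$ of complexes, hence a distinguished triangle; here $Y^b[-b]\in\mathcal{B}^{\bullet}_\omega(R)$ because $Y^b\in\mathcal{B}_\omega(R)$, and the complex $\sigma^{<b}Y^{\bullet}$ of the remaining terms is in $\mathcal{B}^{\bullet}_\omega(R)$ by induction. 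Granting this, $(2)\Rightarrow(1)$ is immediate: if $M^{\bullet}\cong Y^{\bullet}$ in $\mathbf{D}^b(R)$ with $Y^{\bullet}$ a bounded complex of modules in $\mathcal{B}_\omega(R)$, then $M^{\bullet}\in\mathcal{B}^{\bullet}_\omega(R)$, and so $\mathcal{B}^{\bullet}_\omega(R)$-$\id M^{\bullet}=\sup\mathbf{R}\Hom_R(\omega,M^{\bullet})$, which is finite since membership in $\mathcal{B}^{\bullet}_\omega(R)$ forces $\mathbf{R}\Hom_R(\omega,M^{\bullet})\in\mathbf{D}^b(R)$ (Definition 5.1(1)).

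For $(1)\Rightarrow(2)$, set $s=\sup\mathbf{R}\Hom_R(\omega,M^{\bullet})$; we may assume $M^{\bullet}\neq0$ (otherwise $Y^{\bullet}=0$ works), so $\mathbf{R}\Hom_R(\omega,M^{\bullet})\neq0$ and $s$ is an integer. I would first note that $\sup M^{\bullet}\leqslant s$: since $\omega$ is a module and $M^{\bullet}\simeq\omega\otimes_S^{\mathbf{L}}\mathbf{R}\Hom_R(\omega,M^{\bullet})$, the standard bound on the supremum of a derived tensor product gives $\sup M^{\bullet}\leqslant\sup\mathbf{R}\Hom_R(\omega,M^{\bullet})=s$. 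Next take an injective resolution $M^{\bullet}\to I^{\bullet}$ with $I^t=0$ for $t<u:=\inf M^{\bullet}$ (so $u\leqslant\sup M^{\bullet}\leqslant s$) and let $Y^{\bullet}$ be the soft truncation of $I^{\bullet}$ at $s$, namely
$$Y^{\bullet}:\qquad 0\to I^u\to\cdots\to I^{s-1}\to Z^s\to 0,\qquad Z^s:=\Ker(I^s\to I^{s+1}),$$
with $Z^s$ in degree $s$. Because $H^t(I^{\bullet})=H^t(M^{\bullet})=0$ for $t>s$, the canonical map $Y^{\bullet}\to I^{\bullet}$ is a quasi-isomorphism, so $Y^{\bullet}\cong M^{\bullet}$ in $\mathbf{D}^b(R)$, and it remains only to check that each term of $Y^{\bullet}$ lies in $\mathcal{B}_\omega(R)$.

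The modules $I^u,\dots,I^{s-1}$ are injective, hence in $\mathcal{B}_\omega(R)$ because $\mathcal{B}_\omega(R)$ is coresolving (Fact 2.7(1)). For $Z^s$, observe that the hard truncation $\sqsubset^sI^{\bullet}$ is, up to the shift placing it in degrees $\geqslant s$, an injective resolution of $Z^s$; hence for every $t\geqslant 1$ we get $\Ext^t_R(\omega,Z^s)\cong H^{s+t}(\mathbf{R}\Hom_R(\omega,M^{\bullet}))=0$ since $s+t>s$, so $Z^s\in{_R\omega^{\perp}}$. Moreover the short exact sequence of complexes $0\to\sqsubset^sI^{\bullet}\to I^{\bullet}\to I^{\bullet}/\sqsubset^sI^{\bullet}\to 0$ yields a distinguished triangle whose two outer terms lie in $\mathcal{B}^{\bullet}_\omega(R)$ — namely $I^{\bullet}\simeq M^{\bullet}$ by hypothesis, and $I^{\bullet}/\sqsubset^sI^{\bullet}$ because it is a bounded complex of injectives — so $\sqsubset^sI^{\bullet}\in\mathcal{B}^{\bullet}_\omega(R)$; since $\sqsubset^sI^{\bullet}\cong Z^s[-s]$ in $\mathbf{D}^b(R)$ and $\mathcal{B}^{\bullet}_\omega(R)$ is closed under shifts, $Z^s\in\mathcal{B}^{\bullet}_\omega(R)$. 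By Lemma 5.3 this gives $\mathcal{B}_\omega(R)$-$\id_RZ^s<\infty$, whence $Z^s\in\mathcal{B}_\omega(R)$ by Lemma 4.1, and the plan is complete. I expect this last step — extracting the cosyzygy $Z^s$ and checking simultaneously that $Z^s\in{_R\omega^{\perp}}$ and $Z^s\in\mathcal{B}^{\bullet}_\omega(R)$ — to be the main obstacle: the former membership is precisely where the invariant $s$ is used, through the vanishing $H^{>s}\mathbf{R}\Hom_R(\omega,M^{\bullet})=0$, while the latter relies on the triangulated structure of $\mathcal{B}^{\bullet}_\omega(R)$ together with the bound $\sup M^{\bullet}\leqslant s$ that makes $Y^{\bullet}$ genuinely bounded. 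The surrounding bookkeeping with truncations and the identification of $\Ext^t_R(\omega,Z^s)$ with the cohomology of $\mathbf{R}\Hom_R(\omega,I^{\bullet})$ is routine and runs parallel to the proof of Lemma 5.3.
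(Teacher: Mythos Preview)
Your proof is correct and takes a genuinely different route from the paper's. The paper proves $(1)\Rightarrow(2)$ by induction on $\amp M^{\bullet}$: in the base case $\amp M^{\bullet}=0$ it invokes Lemma~5.3 directly, and in the inductive step it peels off one injective $I^i[-i]$ from the bottom via the distinguished triangle of Remark~5.2(1), applies the induction hypothesis to $vI^{\bullet}$, and builds $Y^{\bullet}$ as a shifted mapping cone. Your argument instead constructs $Y^{\bullet}$ in one shot as the soft truncation $\tau^{\leqslant s}I^{\bullet}$ of an injective resolution at the degree $s=\sup\mathbf{R}\Hom_R(\omega,M^{\bullet})$, and then checks that the only non-injective term $Z^s=\Ker(I^s\to I^{s+1})$ lies in $\mathcal{B}_\omega(R)$ by combining $Z^s\in{_R\omega^{\perp}}$ with $Z^s\in\mathcal{B}^{\bullet}_\omega(R)$ (via the triangle on hard truncations) and applying Lemmas~5.3 and~4.1.

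What your approach buys: it is a direct, non-inductive construction that makes the r\^ole of the invariant $s$ completely transparent and yields the explicit bound that $Y^{\bullet}$ can be taken with terms concentrated in degrees $[\inf M^{\bullet},\,s]$; it also makes visible that your argument is essentially the proof of Lemma~5.3 run for a complex rather than a module. The paper's inductive approach, by contrast, formally reduces to the module case (Lemma~5.3) but produces $Y^{\bullet}$ only implicitly as an iterated cone, with no immediate control on its length. The one extra ingredient you need that the paper's proof does not is the inequality $\sup M^{\bullet}\leqslant s$, which you obtain from the isomorphism $M^{\bullet}\simeq\omega\otimes_S^{\mathbf{L}}\mathbf{R}\Hom_R(\omega,M^{\bullet})$ together with the standard supremum bound for derived tensor products; this is correct and is precisely what ensures that $\tau^{\leqslant s}I^{\bullet}\to I^{\bullet}$ is a quasi-isomorphism.
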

\begin{proof} $(2)\Rightarrow (1)$ The assertion follows from the fact that a complex $Y^{\bullet}$ of finite length consisting of modules in $\mathcal{B}_\omega(R)$
is in $\mathcal{B}^{\bullet}_\omega(R)$.

$(1)\Rightarrow (2)$ Let $\mathcal{B}^{\bullet}_\omega(R)$-$\id M^{\bullet}<\infty$. Then $M^{\bullet}\in \mathcal{B}^{\bullet}_\omega(R)$.
We will proceed by induction on $\amp M^\bullet$.
If $\amp M^\bullet$ $=0$, then there exists $T\in \Mod R$ such that $M^\bullet\cong T[-s]$, where $s=\sup M^\bullet$.
Since $\mathcal{B}_\omega(R)$-$\id_RT<\infty$ by Lemma 5.3, we have a quasi-isomorphism $T\rightarrow Y^{\bullet}$ with
$$Y^{\bullet}:=\cdots 0\rightarrow Y^0\rightarrow Y^1\rightarrow \cdots \rightarrow Y^n\rightarrow 0\rightarrow \cdots$$
a bounded complex and all $Y^i$ in $\mathcal{B}_\omega(R)$. Then the complex $Y^{\bullet}[-s]$ is the desired complex.

Now suppose $\amp M^\bullet\geqslant 1$. By Remark 5.2(1), there exists a distinguished triangle:
$$vI^\bullet[-1]\rightarrow M^{\bullet}\rightarrow I^i[-i]\stackrel{\alpha}{\longrightarrow} vI^\bullet$$ in $\mathbf{D}^b(R)$.
Since $\amp vI^\bullet<\amp M^\bullet$, by the induction hypothesis, there exists an isomorphism $\beta: vI^\bullet\rightarrow Y_1^{\bullet}$
in $\mathbf{D}^b(R)$ with $Y_1^{\bullet}$
a bounded complex consisting of modules in $\mathcal{B}_\omega(R)$. Thus we get another triangle:
$$vI^\bullet[-1]\rightarrow M^{\bullet}\rightarrow I^i[-i]\stackrel{\beta\alpha}{\longrightarrow} Y_1^\bullet$$ in $\mathbf{D}^b(R)$.
Furthermore, we have a triangle:
$$I^i[-i]\stackrel{\beta\alpha}{\rightarrow} Y_1^\bullet \rightarrow M^{\bullet}[1]\rightarrow I^i[-i+1]$$ in $\mathbf{D}^b(R)$.
Let $Y^{\bullet}_2$ be the mapping cone of $\beta\alpha$. Then there exists an isomorphism $M^{\bullet}[1]\rightarrow Y^{\bullet}_2$
in $\mathbf{D}^b(R)$. Put $Y^{\bullet}=Y^{\bullet}_2[-1]$. Then $Y^{\bullet}$ has finite length and all spots in $Y^{\bullet}$
are in $\mathcal{B}_\omega(R)$, and so $Y^{\bullet}$ is the desired complex.
\end{proof}

Let $\Lambda$ be an artin $R$-algebra over a
commutative artin ring $R$. We denote by $D$ the ordinary Matlis duality,
that is, $D(-):=\Hom_R(-,E^0(R/J(R)))$, where $J(R)$ is the
Jacobson radical of $R$ and $E^0(R/J(R))$ is the injective envelope
of $R/J(R)$. It is easy to verify that ($\Lambda, \Lambda$)-bimodule
$D(\Lambda)$ is semidualizing.
Recall that $\Lambda$ is called {\bf Gorenstein} if $\id_{\Lambda} \Lambda=\id_{\Lambda^{op}}\Lambda<\infty$. As an application of Theorem 5.5,
we get the following

\begin{cor} \label{cor: 5.6} Let $\Lambda$ be an artin algebra. Then the following statements are equivalent for any $n\geqslant 0$.
\begin{enumerate}
\item $\Lambda$ is Gorenstein with $\id_{\Lambda} \Lambda=\id_{\Lambda^{op}}\Lambda\leqslant n$.
\item For any simple module $T\in \Mod \Lambda$, $\mathcal{B}^{\bullet}_{D(\Lambda)}(\Lambda)$-$\id_{\Lambda}T\leqslant n$.
\item For any simple module $T\in \Mod \Lambda$, there exists a quasi-isomorphism $T\rightarrow Y^{\bullet}$ with $Y^{\bullet}$ a bounded complex of length at most $n+1$ consisting of modules in $\mathcal{B}_{D(\Lambda)}(\Lambda)$.
\item For any simple module $T\in \Mod \Lambda$, there exists an exact sequence:
$$0\rightarrow T\rightarrow X^T\rightarrow W^T\rightarrow 0$$ in $\Mod \Lambda$ such that $X^T\in \mathcal{B}_{D(\Lambda)}(\Lambda)$ and $\id_{\Lambda}W^T\leqslant n-1$.
\item For any simple module $T\in \Mod \Lambda$, there exists an exact sequence:
$$0\rightarrow X_T\rightarrow W_T\rightarrow T\rightarrow 0$$ in $\Mod \Lambda$ such that $X^T\in \mathcal{B}_{D(\Lambda)}(\Lambda)$ and $\id_{\Lambda}W_T\leqslant n$.
\end{enumerate}
\end{cor}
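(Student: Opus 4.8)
The plan is to obtain all the equivalences by specializing Theorems 4.2, 4.8 and 5.5 to the semidualizing bimodule $\omega=D(\Lambda)$ and to the simple modules $T$, after recording two facts about this particular $\omega$. Since $D(\Lambda)$ is the minimal injective cogenerator of $\Mod\Lambda$, every injective left $\Lambda$-module is a direct summand of a direct sum of copies of $D(\Lambda)$ and conversely; hence $\mathcal{P}_{\omega}(\Lambda)=\Add_{\Lambda}D(\Lambda)$ (Proposition 3.4(2)) is exactly the class of injective left $\Lambda$-modules, so $\mathcal{P}_{\omega}(\Lambda)$-$\id_{\Lambda}(-)$ is the ordinary injective dimension $\id_{\Lambda}(-)$. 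Moreover, for a module $T$, Lemma 5.3 gives $\mathcal{B}^{\bullet}_{\omega}(\Lambda)$-$\id_{\Lambda}T<\infty\Leftrightarrow T\in\mathcal{B}^{\bullet}_{\omega}(\Lambda)\Leftrightarrow\mathcal{B}_{\omega}(\Lambda)$-$\id_{\Lambda}T<\infty$, and when these hold both invariants equal $\sup\{i\geqslant 0\mid\Ext^{i}_{\Lambda}(\omega,T)\neq 0\}$, by Definition 5.4 together with the equivalence $(1)\Leftrightarrow(3)$ of Theorem 4.2. With these two identifications, $(2)\Leftrightarrow(3)$ is Theorem 5.5 read off for the module $T$ (a quasi-isomorphism $T\rightarrow Y^{\bullet}$ with $Y^{\bullet}$ a complex of length $\leqslant n+1$ in $\mathcal{B}_{\omega}(\Lambda)$ is the same datum as an exact coresolution $0\rightarrow T\rightarrow Y^{0}\rightarrow\cdots\rightarrow Y^{n}\rightarrow 0$), and $(2)\Leftrightarrow(4)\Leftrightarrow(5)$ is the equivalence $(1)\Leftrightarrow(4)\Leftrightarrow(5)$ of Theorem 4.2 applied to each simple $T$; here the implications $(4)\Rightarrow(2)$ and $(5)\Rightarrow(2)$ rely on Remark 4.3, which says the finiteness hypothesis of Theorem 4.2 is not needed for those two implications.

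It then remains to prove $(1)\Leftrightarrow(2)$. For $(1)\Rightarrow(2)$: if $\Lambda$ is Gorenstein with $\id_{\Lambda}\Lambda=\id_{\Lambda^{op}}\Lambda\leqslant n$, then applying the Matlis duality $D$ to projective resolutions of $D(\Lambda)$ shows $\pd_{\Lambda}D(\Lambda)=\id_{\Lambda^{op}}\Lambda\leqslant n$ and $\pd_{\Lambda^{op}}D(\Lambda)=\id_{\Lambda}\Lambda\leqslant n$, so $_{\Lambda}D(\Lambda)_{\Lambda}$ is both left and right semi-tilting. By Corollary 4.9, $\mathcal{B}_{\omega}(\Lambda)$-$\id_{\Lambda}M=\sup\{i\geqslant 0\mid\Ext^{i}_{\Lambda}(\omega,M)\neq 0\}$ for every $M\in\Mod\Lambda$, and this supremum is $\leqslant\pd_{\Lambda}\omega\leqslant n$; in particular every simple module has Bass injective dimension $\leqslant n$, which is $(2)$.

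For $(2)\Rightarrow(1)$ we must recover both $\id_{\Lambda}\Lambda$ and $\id_{\Lambda^{op}}\Lambda$ from information about simple left modules. On one hand, from $\mathcal{B}_{\omega}(\Lambda)$-$\id_{\Lambda}T\leqslant n<\infty$ and the implication $(1)\Rightarrow(3)$ of Theorem 4.2 we get $\Ext^{\geqslant n+1}_{\Lambda}(D(\Lambda),T)=0$ for every simple $T$; since over an artin algebra the projective dimension of a finitely generated module is detected by its $\Ext$-groups against the simple modules, this forces $\pd_{\Lambda}D(\Lambda)\leqslant n$, i.e. $\id_{\Lambda^{op}}\Lambda\leqslant n$. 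On the other hand, by Lemma 5.3 each simple $T$ lies in $\mathcal{B}^{\bullet}_{\omega}(\Lambda)$; since $\mathcal{B}^{\bullet}_{\omega}(\Lambda)$ is a triangulated subcategory of $\mathbf{D}^{b}(\Lambda)$ (Remark 5.2) and the long exact $\Ext$-sequence controls $\sup\mathbf{R}\Hom_{\Lambda}(\omega,-)$, an induction on composition length shows that every finitely generated $M\in\Mod\Lambda$ satisfies $\mathcal{B}_{\omega}(\Lambda)$-$\id_{\Lambda}M\leqslant n$; taking $M={_{\Lambda}\Lambda}$ and invoking Theorem 4.8(2) (legitimate, the artin algebra $\Lambda$ being left coherent) yields $\pd_{\Lambda^{op}}D(\Lambda)\leqslant n$, i.e. $\id_{\Lambda}\Lambda\leqslant n$. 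Hence $\Lambda$ is Gorenstein with $\id_{\Lambda}\Lambda=\id_{\Lambda^{op}}\Lambda\leqslant n$. (Alternatively, once $\id_{\Lambda^{op}}\Lambda\leqslant n$ is in hand one may quote the left--right symmetry of finite self-injective dimension over artin algebras.)

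The main obstacle is the $(2)\Rightarrow(1)$ direction: hypothesis $(2)$ only constrains simple \emph{left} modules, so the two delicate points are propagating the bound from the simples up to $_{\Lambda}\Lambda$ so that Theorem 4.8(2) becomes applicable --- which is precisely where the triangulated structure of $\mathcal{B}^{\bullet}_{\omega}(\Lambda)$ and the module/complex comparison of Lemma 5.3 are used --- and extracting the ``opposite-side'' bound $\id_{\Lambda^{op}}\Lambda\leqslant n$ purely from $\Ext$-vanishing of $D(\Lambda)$ against the simples. Everything else is a faithful translation of Theorems 4.2, 4.8 and 5.5 into the special case $\omega=D(\Lambda)$.
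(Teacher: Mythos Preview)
Your argument is correct, and for the implications $(2)\Leftrightarrow(3)$ and $(2)\Leftrightarrow(4)\Leftrightarrow(5)$ it matches the paper's proof, which also simply invokes Theorems 5.5 and 4.2 (with the same implicit identification of the complex-level and module-level Bass injective dimensions for a module $T$).

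Where you diverge from the paper is in the equivalence $(1)\Leftrightarrow(2)$. The paper proves $(1)\Rightarrow(2)$ by passing through Gorenstein injective modules: over an $n$-Gorenstein algebra, $\coOmega^{n}(T)$ is Gorenstein injective by \cite[Theorem 12.3.1]{EJ}, hence lies in $\mathcal{B}_{D(\Lambda)}(\Lambda)$ by results of \cite{TH}. For the converse the paper goes from $(4)$ to $(1)$ via global Gorenstein dimensions, invoking \cite{BM}, \cite{Ta} and \cite{HH}. Your route is more internal to the present paper: for $(1)\Rightarrow(2)$ you observe that Gorenstein forces $D(\Lambda)$ to be left and right semi-tilting and then apply Corollary~4.9 directly; for $(2)\Rightarrow(1)$ you first read off $\pd_{\Lambda}D(\Lambda)\leqslant n$ (hence $\id_{\Lambda^{op}}\Lambda\leqslant n$) from $\Ext$-vanishing against simples, and then propagate the Bass bound from simples to ${}_{\Lambda}\Lambda$ by induction on composition length using the triangulated structure of $\mathcal{B}^{\bullet}_{\omega}(\Lambda)$, finishing with Theorem~4.8(2). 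Your approach avoids the detour through Gorenstein homological algebra and several external references, at the cost of needing the (easy) induction on length; the paper's approach has the virtue of exhibiting the connection with $\mathcal{GI}(\Lambda)$ explicitly. One small remark: as in the paper, your citation of Theorem~5.5 for $(2)\Leftrightarrow(3)$ really uses the module-level statement that $\mathcal{B}_{\omega}(\Lambda)$-$\id_{\Lambda}T\leqslant n$ is equivalent to the existence of a length-$(n{+}1)$ $\mathcal{B}_{\omega}(\Lambda)$-coresolution, since Theorem~5.5 as stated only addresses finiteness; your parenthetical makes clear you are aware of this.
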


\begin{proof}
By Theorem 4.2, we have $(2)\Leftrightarrow (4)\Leftrightarrow (5)$. By Theorem 5.5, we have $(2)\Leftrightarrow (3)$.

$(1)\Rightarrow (2)$ Let $T\in \Mod \Lambda$ be simple. Since $\Lambda$ is Gorenstein with $\id_{\Lambda} \Lambda=\id_{\Lambda^{op}}\Lambda\leqslant n$,
it follows from \cite[Theorem 12.3.1]{EJ} that
$\coOmega^{n}(T)$ is Gorenstein injective. Then $\coOmega^{n}(T)\in\mathcal{B}_{D(\Lambda)}(\Lambda)$ by \cite[Corollary 5.2 and Theorem 3.9]{TH}.
Now the assertion follows from Lemma 5.3.

$(4)\Rightarrow (1)$ Let $T\in \Mod \Lambda$ be simple. Then by (4) and \cite[Theorem 3.9 and Corollary 4.2]{TH},
$\mathcal{GI}(\Lambda)$-$\id_{\Lambda}T\leqslant n$. So $\sup\{\mathcal{GP}(\Lambda)$-$\pd_{\Lambda}M\mid M\in \Mod \Lambda\}=
\sup\{\mathcal{GI}(\Lambda)$-$\id_{\Lambda}M\mid M\in \Mod \Lambda\}\leqslant n$ by \cite[Theorem 1.1]{BM} and \cite[Theorem 2.1]{Ta}.
It follows from \cite[Theorem 1.4]{HH} that $\Lambda$ is Gorenstein with $\id_{\Lambda} \Lambda=\id_{\Lambda^{op}}\Lambda\leqslant n$.
\end{proof}

\section {\bf A Dual of Auslander-Bridger's Approximation Theorem}

In this section, we first obtain a dual version of the Auslander-Bridger's approximation theorem, and then give its several
applications. We begin with the following

\begin{lem} \label{lem: 6.1} {\rm (\cite[Proposition 2.2]{W})}
\begin{enumerate}
\item For any $X\in \Mod R$, we have $(\theta_X)_*\cdot\mu_{X_*}=1_{X_*}$.
\item For any $Y\in \Mod S$, we have $\theta_{\omega\otimes_SY}\cdot(1_\omega\otimes\mu_Y)=1_{\omega\otimes_SY}$.
\end{enumerate}
\end{lem}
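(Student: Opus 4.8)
The plan is to verify both identities by a direct computation with elements, using nothing beyond the explicit formulas for $\theta$, $\mu$ and the functor $(-)_*=\Hom_R(\omega,-)$. Structurally, (1) and (2) are exactly the two triangle (unit--counit) identities for the adjoint pair $\bigl(\omega\otimes_S-,\ \Hom_R(\omega,-)\bigr)$, with $\theta$ playing the role of the counit and $\mu$ the unit; so once the maps are written out explicitly the equalities are forced, and no semidualizing axiom is actually needed (the statement holds for an arbitrary bimodule $_R\omega_S$).

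For part (1), fix $X\in\Mod R$ and an arbitrary $f\in X_*=\Hom_R(\omega,X)$. By definition $\mu_{X_*}(f)$ is the element of $(\omega\otimes_SX_*)_*=\Hom_R(\omega,\omega\otimes_SX_*)$ given by $x\mapsto x\otimes f$ for $x\in\omega$. Applying $(\theta_X)_*=\Hom_R(\omega,\theta_X)$ amounts to post-composing with $\theta_X$, so $\bigl[(\theta_X)_*\cdot\mu_{X_*}\bigr](f)$ is the map $x\mapsto\theta_X(x\otimes f)=f(x)$, i.e.\ it equals $f$. As $f$ was arbitrary, $(\theta_X)_*\cdot\mu_{X_*}=1_{X_*}$.

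For part (2), fix $Y\in\Mod S$. Both sides are $R$-linear endomorphisms of $\omega\otimes_SY$, so it suffices to evaluate on a simple tensor $x\otimes y$ with $x\in\omega$, $y\in Y$. By definition $(1_\omega\otimes\mu_Y)(x\otimes y)=x\otimes\mu_Y(y)$ in $\omega\otimes_S(\omega\otimes_SY)_*$, and then, writing $M=\omega\otimes_SY$, we get $\theta_M\bigl(x\otimes\mu_Y(y)\bigr)=\mu_Y(y)(x)=x\otimes y$. Hence $\theta_{\omega\otimes_SY}\cdot(1_\omega\otimes\mu_Y)=1_{\omega\otimes_SY}$.

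The only delicate point --- the ``main obstacle,'' such as it is --- is bookkeeping: one must keep straight which copy of $\omega\otimes_SX_*$ (resp.\ $(\omega\otimes_SY)_*$) each arrow targets, and check that $1_\omega\otimes\mu_Y$ and $\Hom_R(\omega,\theta_X)$ are genuinely well defined (this uses that $\mu_Y$ is $S$-linear and that $\theta_X$ is $R$-linear, both immediate from the formulas). With these identifications pinned down, the two computations above complete the proof.
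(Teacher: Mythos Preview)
Your proof is correct. The paper does not give its own argument for this lemma but simply cites \cite[Proposition 2.2]{W}; your direct elementwise verification of the two triangle identities for the adjunction $(\omega\otimes_S-,\Hom_R(\omega,-))$ is exactly the standard proof and fills in what the paper leaves to the reference.
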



For any $n\geqslant 0$, recall from \cite{AR1} that the {\bf grade} of a finitely generated $R$-module $M$
is defined as $\grade_RM:=\inf\{i\geqslant 0\mid \Ext^i_R(M,R)\neq 0\}$; and the {\bf strong grade} of $M$,
denoted by $\sgrade_RM$, is said to be at least $n$ if $\grade_RX\geqslant n$ for any submodule $X$ of $M$.
We introduce two dual versions of these notions as follows.

\begin{df} \label{df: 6.2}
{\rm Let $M\in \Mod R$, $N\in \Mod S$ and $n\geqslant 0$.
\begin{enumerate}
\item The {\bf $\Ext$-cograde} of $M$ with respect to $\omega$ is defined as
$\Ecograde_\omega M:=\inf\{i\geqslant 0\mid\Ext_R^i(\omega,M)\neq 0\}$; and the {\bf strong $\Ext$-cograde} of $M$ with respect to $\omega$,
denoted by $\sEcograde_\omega M$, is said to be at least $n$ if $\Ecograde X\geqslant n$ for any quotient module $X$ of $M$.
\item The {\bf $\Tor$-cograde} of $N$ with respect to $\omega$ is defined as
$\Tcograde_\omega N:=\inf\{i\geqslant 0\mid\Tor^S_i(\omega,N)\neq 0\}$; and the {\bf strong $\Tor$-cograde} of $N$ with respect to $\omega$,
denoted by $\sTcograde_{\omega}N$, is said to be at least $n$ if $\Tcograde_\omega Y\geqslant n$ for any submodule $Y$ of $N$.
\end{enumerate}}
\end{df}

We remark that the $\Tor$-cograde of $N$ with respect to $\omega$ is called the {\bf cograde} of $N$ with respect to $\omega$ in \cite{TH}.

The following result can be regarded as a dual version of the Auslander-Bridger's approximation theorem
(see \cite[Proposition 3.8]{F}).

\begin{thm} \label{thm: 6.3} Let $M\in \Mod R$ and $n\geqslant 1$. If $\Tcograde_\omega\Ext^i_R(\omega,M)\geqslant i$ for any $1\leqslant i\leqslant n$,
then there exists a module $U\in \Mod R$ and a homomorphism $f: U\rightarrow M$ in $\Mod R$ satisfying the following properties:
\begin{enumerate}
\item $\mathcal{P}_\omega(R)$-$\id_RU\leqslant n$, and
\item $\Ext^i_R(\omega,f)$ is bijective for any $1\leqslant i\leqslant n$.
\end{enumerate}
\end{thm}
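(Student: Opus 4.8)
The plan is to dualize the classical Auslander--Bridger construction, building the module $U$ step by step from a "backwards" resolution of $M$ by $\omega$-projective modules, using the cograde hypothesis at each stage to control vanishing. First I would reduce to understanding what the hypothesis $\Tcograde_\omega \Ext^i_R(\omega,M)\geqslant i$ buys us: since $\Tcograde_\omega N = \inf\{j\mid \Tor^S_j(\omega,N)\neq 0\}$, the hypothesis says $\Tor^S_j(\omega,\Ext^i_R(\omega,M))=0$ for all $j<i$ and all $1\leqslant i\leqslant n$. I would compare this with the mapping-cone / spectral-sequence machinery relating $\omega\otimes_S^{\mathbf L}\mathbf{R}\Hom_R(\omega,M)$ to $M$; the point is that the failure of $M$ to lie in $\mathcal{B}_\omega(R)$ is measured by the higher cohomology $\Ext^i_R(\omega,M)$, and the cograde condition forces the "correction terms" coming from these to vanish in a range. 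Concretely, I expect the natural candidate for $f$ to be (a representative of) the counit-type map obtained by truncating $\omega\otimes_S^{\mathbf L}\tau_{\leqslant ?}\mathbf{R}\Hom_R(\omega,M)\to M$, and then showing this representative can be realized by an honest module $U$ with $\mathcal{P}_\omega(R)$-$\id_RU\leqslant n$.

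Carrying this out, I would argue by induction on $n$. For $n=1$: the hypothesis is vacuous (it only constrains $i=1$, where $\Tcograde\geqslant 1$ means $\omega\otimes_S\Ext^1_R(\omega,M)=0$), and one builds an exact sequence $0\to U\to \omega^0\to \coOmega^1(M)$-type data realizing a single $\omega$-projective cosyzygy approximation of $M$; here $\mathcal{P}_\omega(R)$-$\id_RU\leqslant 1$ and one checks $\Ext^1_R(\omega,f)$ is bijective directly from the long exact sequence together with $\theta$ being an isomorphism on $\omega$-projectives and the vanishing $\Tor^S_1(\omega,-)=0$ on the relevant pieces. For the inductive step, apply the case $n-1$ to get $f': U'\to M$ with $\mathcal{P}_\omega(R)$-$\id_RU'\leqslant n-1$ and $\Ext^i_R(\omega,f')$ bijective for $1\leqslant i\leqslant n-1$; the remaining obstruction lives in $\Ext^n_R(\omega,\cdot)$, and the hypothesis $\Tcograde_\omega\Ext^n_R(\omega,M)\geqslant n$ (so in particular $\Tor^S_j(\omega,\Ext^n_R(\omega,M))=0$ for $j<n$) is exactly what allows one to splice on one more layer of $\mathcal{P}_\omega(R)=\Add_R\omega$ without disturbing the lower $\Ext$'s, raising $\mathcal{P}_\omega(R)$-$\id$ by one and killing the degree-$n$ discrepancy. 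Throughout I would lean on Theorem 3.5(2) / Proposition 3.10 to track $\mathcal{P}_\omega(R)$-$\id$, on \cite[Corollary 3.4(3)]{TH} and the mixed/adjoint isomorphism theorems to pass $\Tor$ vanishing into $\theta$-isomorphisms, and on Lemma 6.1 to handle the unit/counit identities that guarantee the constructed maps split appropriately.

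The main obstacle, I expect, is the bookkeeping in the inductive step: one must produce the new module $U$ together with the map $f$ so that simultaneously (a) the $\mathcal{P}_\omega(R)$-injective dimension is exactly controlled, and (b) $\Ext^i_R(\omega,f)$ stays bijective in \emph{all} degrees $1\leqslant i\leqslant n$, not just the top one. The delicate point is that patching in a new $\omega$-projective layer to fix degree $n$ could a priori perturb the isomorphisms already arranged in degrees $<n$; avoiding this requires choosing the patch to be $\Hom_R(\omega,-)$-acyclic in the right range, which is precisely where $\Tcograde_\omega\Ext^i_R(\omega,M)\geqslant i$ for the \emph{intermediate} $i$ (not merely $i=n$) is used. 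I would organize the argument so that the cograde hypotheses for $i=1,\dots,n$ are consumed one at a time across the $n$ inductive steps, each one guaranteeing that the corresponding $\Tor$-term vanishes and hence that the relevant $\theta$-map in the pull-back/push-out diagram is an isomorphism. A secondary technical nuisance is verifying bijectivity of $\Ext^i_R(\omega,f)$ rather than just surjectivity or injectivity; this is handled by chasing the long exact sequence in $\Ext^*_R(\omega,-)$ attached to $0\to (\ker)\to U\to M\to(\coker)\to 0$ and using the cograde vanishing to annihilate both the kernel and cokernel contributions in the stated range.
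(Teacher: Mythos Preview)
Your inductive skeleton matches the paper's proof, and your description of the base case is essentially what the paper does: take a projective presentation $Q_1\to Q_0\to\Ext^1_R(\omega,M)\to 0$ in $\Mod S$, tensor with $\omega$, use $\omega\otimes_S\Ext^1_R(\omega,M)=0$ to get $U=\Ker(1_\omega\otimes f_1)$ with $\mathcal{P}_\omega(R)$-$\id_RU\leqslant 1$, then build $f$ by lifting into the injective resolution of $M$ and check bijectivity of $\Ext^1_R(\omega,f)$ via Lemma~6.1(1).

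Where your sketch diverges from what actually works is the inductive step. It is \emph{not} ``splice on one more layer'' with the cograde conditions ``consumed one at a time.'' In the paper, the passage from $n-1$ to $n$ runs through two independent constructions. First, embed $U'$ into some $W\in\Add_R\omega$ and form the push-out $L$ of $U'\to M$ along $U'\hookrightarrow W$; the bijectivity of $\Ext^i_R(\omega,f')$ for $1\leqslant i\leqslant n-1$ then gives $\Ext^{1\leqslant i\leqslant n-1}_R(\omega,L)=0$ and $\Ext^n_R(\omega,L)\cong\Ext^n_R(\omega,M)$. Second, take a projective resolution of $\Ext^n_R(\omega,M)$ in $\Mod S$ \emph{of length $n$}, apply $\omega\otimes_S-$, and invoke the \emph{full} hypothesis $\Tcograde_\omega\Ext^n_R(\omega,M)\geqslant n$ (all of $\Tor^S_0,\dots,\Tor^S_{n-1}$ vanish, not just one) to obtain an exact sequence whose left end $N$ satisfies $\mathcal{P}_\omega(R)$-$\id_RN\leqslant n$. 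Lift this into the injective resolution of $L$ to produce $h:N\to L$ with $\Ext^n_R(\omega,h)$ bijective, and finally realize $U$ as a pull-back combining $U'$ and $N$ over $L$. So at stage $n$ you spend the entire strength of $\Tcograde\geqslant n$ to build $N$ from scratch, rather than adding a single $\omega$-projective to $U'$.

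Two smaller corrections. The sequence ``$0\to(\ker)\to U\to M\to(\coker)\to 0$'' you propose to chase does not exist as a short or four-term exact sequence, since $f$ is in general neither mono nor epi; the bijectivity of $\Ext^i_R(\omega,f)$ is verified by comparing commutative diagrams at each level, not by one long exact sequence. And Theorem~3.5(2), Proposition~3.10, and the derived truncation $\omega\otimes^{\mathbf L}_S\tau_{\leqslant ?}\mathbf{R}\Hom_R(\omega,M)$ play no role here; the paper's argument is entirely module-level, using only Lemma~6.1, push-out/pull-back diagrams, and the dual of \cite[Lemma~8.2.1]{EJ} to control $\mathcal{P}_\omega(R)$-$\id$.
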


\begin{proof} We proceed by induction on $n$.

Let $n=1$ and $$Q_1\stackrel{f_1}{\longrightarrow} Q_0\rightarrow \Ext^1_R(\omega,M)\rightarrow 0$$
be a projective presentation of $\Ext^1_R(\omega,M)$ in $\Mod S$. Then we get the following exact sequence:
$$\omega\otimes_SQ_1\stackrel{1_\omega\otimes f_1}{\longrightarrow} \omega\otimes_SQ_0\rightarrow \omega\otimes_S\Ext^1_R(\omega,M)\rightarrow 0$$
in $\Mod R$ with both $\omega\otimes_SQ_1$ and $\omega\otimes_SQ_0$ in $\mathcal{P}_\omega(R)(=\Add_R\omega)$. Put $U=\Ker (1_\omega\otimes f_1)$.
Because $\omega\otimes_S\Ext^1_R(\omega,M)=0$ by assumption, $\mathcal{P}_\omega(R)$-$\id_RU\leqslant 1$.

Next we show that there exists a homomorphism $f: U\rightarrow M$ in $\Mod R$
such that $\Ext^1_R(\omega,f)$ is bijective. Since $Q_1$ and $Q_0$ are projective, there exist two homomorphisms $g_0$ and $g_1$
such that we have the following commutative diagram with exact rows:
$$\xymatrix{ Q_1 \ar@{.>}[d]^{g_1} \ar[r]^{f_1}& Q_0 \ar@{.>}[d]^{g_0} \ar[r]^{\delta^{'}\ \ \ \ \ \ }& \Ext^1_R(\omega,M) \ar@{=}[d]\ar[r]& 0\\
{I^0(M)}_* \ar[r]& {\coOmega^1(M)}_*\ar[r]^{\delta\ \ }& \Ext^1_R(\omega,M) \ar[r]& 0. \\
& {\rm Diagram\ (6.1)} & &}$$
Then there exists a homomorphism $f$
such that we have the following commutative diagram with exact rows:
$$\xymatrix{0\ar[r]&U \ar@{.>}[d]^{f} \ar[r]& \omega\otimes_SQ_1 \ar[d]^{h_1} \ar[r]^{1_\omega\otimes f_1}& \omega\otimes_SQ_0 \ar[d]^{h_0} \ar[r]& 0\\
0\ar[r]&M \ar[r]& I^0(M)\ar[r]& \coOmega^1(M) \ar[r]& 0,\\
& & {\rm Diagram\ (6.2)} & & }$$
where $h_1=\theta_{I^0(M)}\cdot (1_\omega\otimes g_1)$ and $h_0=\theta_{\coOmega^1(M)}\cdot (1_\omega\otimes g_0)$.
Applying the functor $(-)_*$ to Diagram (6.2), we obtain the following commutative diagram with exact rows:
$$\xymatrix{ (\omega\otimes_SQ_1)_* \ar[d]^{{h_1}_*} \ar[r]^{(1_\omega\otimes f_1)_*}& (\omega\otimes_SQ_0)_* \ar[d]^{{h_0}_*}
\ar[r]^{\delta^{''}}& \Ext^1_R(\omega,U) \ar[d]^{\Ext^1_R(\omega,f)} \ar[r]& 0\\
{I^0(M)}_*  \ar[r]& {\coOmega^1(M)}_*\ar[r]^{\delta\ \ }& \Ext^1_R(\omega,M)  \ar[r]& 0.\\
& {\rm Diagram\ (6.3)} & & }$$
Because the following diagram:
$$\xymatrix{ Q_0 \ar[d]^{\mu_{Q_0}} \ar[r]^{g_0 \ \ \ \ }& {\coOmega^1(M)}_* \ar[d]^{\mu_{{\coOmega^1(M)}_*}}\\
(\omega\otimes_SQ_0)_* \ar[r]^{(1_\omega\otimes g_0)_*\ \ \ \ }& (\omega\otimes_S{\coOmega^1(M)}_*)_*}$$
is commutative, $\mu_{{\coOmega^1(M)}_*}\cdot g_0=(1_\omega\otimes g_0)_*\cdot \mu_{Q_0}$. Then we have
\begin{align*}
&\ \ \ \ \ {h_0}_*\cdot \mu_{Q_0}\\
& =(\theta_{\coOmega^1(M)}\cdot (1_\omega\otimes g_0))_*\cdot \mu_{Q_0}\\
& =(\theta_{\coOmega^1(M)})_*\cdot (1_\omega\otimes g_0)_*\cdot \mu_{Q_0}\\
& =(\theta_{\coOmega^1(M)})_*\cdot\mu_{{\coOmega^1(M)}_*}\cdot g_0\\
& =1_{{\coOmega^1(M)}_*}\cdot g_0 \ \text{(by Lemma 6.1(1))}\\
& =g_0.
\end{align*}
On the other hand, from Diagrams (6.1) and (6.3), we get that $\delta^{'}=\delta\cdot g_0$ and
$\Ext^1_R(\omega,f)\cdot\delta^{''}=\delta\cdot {h_0}_*$. So we have
\begin{align*}
&\ \ \ \Ext^1_R(\omega,f)\cdot\delta^{''}\cdot\mu_{Q_0}\\
& =\delta\cdot {h_0}_*\cdot\mu_{Q_0}\\
& =\delta\cdot g_0\\
& =\delta^{'},
\end{align*}
and we get the following commutative diagram with exact rows:
$$\xymatrix{ (\omega\otimes_SQ_1)_* \ar[d]_{\cong}^{(\mu_{Q_1})^{-1}} \ar[r]^{(1_\omega\otimes f_1)_*}& (\omega\otimes_SQ_0)_* \ar[d]_{\cong}^{(\mu_{Q_0})^{-1}}
\ar[r]^{\delta^{''}}& \Ext^1_R(\omega,U) \ar[d]^{\Ext^1_R(\omega,f)} \ar[r]& 0\\
Q_1  \ar[r]^{f_1}& Q_0\ar[r]^{\delta^{'}\ \ \ \ \ }& \Ext^1_R(\omega,M)  \ar[r]& 0.}$$
Thus $\Ext^1_R(\omega,f)$ is bijective.

Now suppose $n\geqslant 2$. By the induction hypothesis, there exists a homomorphism $f^{'}: U^{'}\rightarrow M$ in $\Mod R$
such that $\mathcal{P}_\omega(R)$-$\id_RU^{'}\leqslant n-1$ and $\Ext^{i}_R(\omega,f^{'})$ is bijective for any $1\leqslant i \leqslant n-1$.
Then there exists a $\Hom_R(-, \mathcal{P}_\omega(R))$-exact exact sequence:
$$0\rightarrow U^{'}\stackrel{g^{'}}{\rightarrow} W \to X \to 0$$
in $\Mod R$ with $W$ in $\mathcal{P}_\omega(R)$, and we get
the following commutative diagram with exact columns and rows:
$$\xymatrix{
& & & 0 \ar[d] & 0 \ar@{.>}[d] & \\
& & & M \ar[d]^{\binom {1_M} {0}} \ar@{=}[r] & M \ar@{.>}[d] & \\
& 0 \ar[r] & U^{'} \ar@{=}[d] \ar[r]^{\binom {f^{'}} {g^{'}}} & M\oplus W \ar[d]^{(0,1_W)} \ar[r] & L \ar@{.>}[d] \ar[r] & 0 \\
& 0 \ar[r] & U^{'} \ar[r]^{g^{'}} & W \ar[d] \ar[r] & X \ar[r]\ar@{.>}[d] & 0 \\
& & & 0 & 0, & }$$
where $L=\Coker {\binom {f^{'}} {g^{'}}}$. It is easy to see that the exact sequence:
$$0\rightarrow U^{'} \stackrel{\binom {f^{'}} {g^{'}}}{\longrightarrow}M\oplus W\rightarrow L\rightarrow 0$$
is $\Hom_R(-, \mathcal{P}_\omega(R))$-exact. Because $\mathcal{P}_\omega(R)$-$\id_RU^{'}\leqslant n-1$ and $\Ext^{i}_R(\omega,f^{'})$ is bijective for any $1\leqslant i \leqslant n-1$,
we have that the sequence $$0\rightarrow {U^{'}}_*\stackrel{\binom {f^{'}} {g^{'}}_*\ }{\longrightarrow}
(M\oplus W)_*\rightarrow L_*\rightarrow 0$$ is exact, $\Ext_R^{1\leqslant i\leqslant n-1}(\omega,L)=0$
and $\Ext_R^{n}(\omega,M)\cong \Ext_R^{n}(\omega,L)$. Take a projective resolution:
$$Q_{n}\stackrel{f_n}{\longrightarrow} \cdots \stackrel{f_2}{\longrightarrow} Q_{1}\stackrel{f_1}{\longrightarrow} Q_{0}
\rightarrow \Ext^{n}_R(\omega,M)\rightarrow 0\eqno{(6.1)}$$
of $\Ext^{n}_R(\omega,M)$ in $\Mod S$. By assumption $\Tcograde_\omega\Ext^{n}_R(\omega,M)\geqslant n$, so we get the following exact sequence:
$$0\rightarrow N\rightarrow \omega\otimes_SQ_{n}\stackrel{1_\omega\otimes f_n}{\longrightarrow} \cdots \stackrel{1_\omega\otimes f_2}{\longrightarrow}
\omega\otimes_SQ_{1}\stackrel{1_\omega\otimes f_1}{\longrightarrow} \omega\otimes_SQ_{0}\rightarrow 0\eqno{(6.2)}$$
in $\Mod R$ with all $\omega\otimes_SQ_{i}$ in $\mathcal{P}_\omega(R)$ and $N=\Ker (1_\omega\otimes f_{n})$. Then $\mathcal{P}_\omega(R)$-$\id_RN\leqslant n$.
Applying the functor $(-)_*$ to the exact sequence (6.2) we get the following sequence:
$$0\rightarrow N_*\rightarrow (\omega\otimes_SQ_{n})_*\stackrel{(1_\omega\otimes f_n)_*}{\longrightarrow} \cdots
\stackrel{(1_\omega\otimes f_2)_*}{\longrightarrow} (\omega\otimes_SQ_{1})_*\stackrel{(1_\omega\otimes f_1)_*}{\longrightarrow} (\omega\otimes_SQ_{0})_*\rightarrow 0.\eqno{(6.3)}$$
Comparing the sequences (6.1) with (6.3) we get that $\Ext_R^{1\leqslant i \leqslant n-1}(\omega,N)=0$ and $\Ext_R^{n}(\omega,N)\cong \Ext_R^{n}(\omega,M)$.

Because $\Ext_R^{i}(\omega,L)=0$ for any $1\leqslant i \leqslant n-1$, we get an exact sequence:
$${I^0(L)}_*\rightarrow {I^1(L)}_*\rightarrow \cdots \rightarrow {I^{n-1}(L)}_*\rightarrow K_*\rightarrow \Ext_R^{n}(\omega,L)\rightarrow 0$$
in $\Mod S$, where $K=\Coker (I^{n-2}(L)\rightarrow I^{n-1}(L))$. Since all $Q_i$ are projective,
there exist homomorphisms $g_0, g_1,\cdots g_n$
such that we have the following commutative diagram with exact rows:
$$\xymatrix{ Q_{n} \ar@{.>}[d]^{g_n} \ar[r]^{f_n}&\cdots \ar[r]^{f_2}& Q_{1} \ar@{.>}[d]^{g_{1}} \ar[r]^{f_0} &Q_{0} \ar@{.>}[d]^{g_{0}} \ar[r]& \Ext^{n}_R(\omega,L) \ar@{=}[d]\ar[r]& 0\\
{I^0(L)}_*  \ar[r]&\cdots \ar[r]&{I^{n-1}(L)}_* \ar[r]& K_* \ar[r]& \Ext^{n}_R(\omega,L)  \ar[r]& 0. \\
& & {\rm Diagram\ (6.4)}& & & }$$
Then there exists a homomorphism $h$
such that we have the following commutative diagram with exact rows:
$$\xymatrix{0\ar[r]&N \ar@{.>}[d]^{h} \ar[r]^{s\ \ \ \ }& \omega\otimes_SQ_{n} \ar[d]^{h_{n}} \ar[r]^{1_\omega\otimes f_n}&\cdots
\ar[r]^{1_\omega\otimes f_1\ \ \ }& \omega\otimes_SQ_{1}\ar[d]^{h_{1}} \ar[r]^{1_\omega\otimes f_0}& \omega\otimes_SQ_{0} \ar[d]^{h_{0}} \ar[r]&0\\
0\ar[r]&L \ar[r]& I^0(L)\ar[r]&\cdots \ar[r]& I^{n-1}(L)\ar[r]&K  \ar[r]& 0, \\
& & & {\rm Diagram\ (6.5)} & & }$$
where $h_i=\theta_{I^{n-i}(L)}\cdot (1_\omega\otimes g_i)$ for any $1\leqslant  i\leqslant n$ and $h_0=\theta_{K}\cdot (1_\omega\otimes g_0)$.
Notice that the functor $(-)_*$ gives Diagram (6.5) back to Diagram (6.4), so $\Ext_R^{n}(\omega,h)$ is bijective.

Put $W^{'}=\omega\otimes_SQ_{n}$. Then we get an exact sequence:
$$0\rightarrow N\stackrel{\binom {h} {s}}{\longrightarrow} L\oplus W^{'}\rightarrow N^{'}\rightarrow 0$$
and a $\Hom_R(-, \mathcal{P}_\omega(R))$-exact exact sequence:
$$0\rightarrow U^{'} \stackrel{u}
{\longrightarrow} M\oplus W\oplus W^{'}\rightarrow L\oplus W^{'}\rightarrow 0$$
in $\Mod R$, where $u={\tiny \left(\begin{array}{c} f^{'} \\ g^{'} \\ 0\end{array}\right)}$.
Consider the following pull-back diagram:
$$\xymatrix{& & 0 \ar[d] & 0 \ar[d] & \\
0\ar[r] & U^{'}\ar[r]^{\alpha} \ar@{=}[d]& U \ar[r]^{\beta} \ar[d]^{\lambda}& N \ar[r] \ar[d]& 0\\
0\ar[r] & U^{'}\ar[r]^{u\ \ \ \ \ \ \ \ } & M\oplus W\oplus W^{'} \ar[r] \ar[d]& L\oplus W^{'} \ar[r] \ar[d]& 0\\
& & N^{'} \ar@{=}[r] \ar[d]&N^{'} \ar[d]\\
& & 0 & 0.& }$$
It is easy to see that the first row in the above diagram is $\Hom_R(-, \mathcal{P}_\omega(R))$-exact exact.
Because $\mathcal{P}_\omega(R)$-$\id_RU^{'}\leqslant n-1$ and $\mathcal{P}_\omega(R)$-$\id_RN\leqslant n$, $\mathcal{P}_\omega(R)$-$\id_RU\leqslant n$ by the dual version of \cite[Lemma 8.2.1]{EJ}.

Put $p=(1_M,0,0):M\oplus W\oplus W'\twoheadrightarrow M$ and $f=p\cdot \lambda$. Then $\Ext_R^i(\omega,f)=\Ext_R^i(\omega,p)\cdot\Ext_R^i(\omega,\lambda)$ for any $i\geqslant 0$.
Because $W\oplus W^{'}\in \mathcal{P}_\omega(R)$, $\Ext_R^i(\omega,p)$ is bijective for any $i\geqslant 1$.
Note that $\Ext^i_R(\omega,f^{'})$ is bijective for any $1\leqslant i\leqslant n-1$ and $\Ext_R^{1\leqslant i\leqslant n-1}(\omega,N)=0=\Ext_R^{1\leqslant i\leqslant n-1}(\omega,L)$.
We have the following commutative diagram with exact rows:
$$\xymatrix{&\Ext_R^{i}(\omega,U^{'}) \ar@{=}[d] \ar[r]^{\Ext_R^{i}(\omega,\alpha)}& \Ext_R^{i}(\omega,U) \ar[d]^{\Ext_R^{i}(\omega,\lambda)} \ar[r] &0 \\
0\ar[r]&\Ext_R^{i}(\omega,U^{'})\ar[r]^{\Ext_R^{i}(\omega,u)\ \ \ \ \ \ \ \ }&\Ext_R^{i}(\omega,M\oplus W\oplus W^{'})\ar[r]&0.
}$$
So $\Ext^i_R(\omega,\lambda)$ and $\Ext^i_R(\omega,f)$ are bijective for $1\leqslant i\leqslant n-1$.
On the other hand, because $\Ext_R^{n}(\omega,h)$ is bijective and $\Ext_R^{n+1}(\omega,U^{'})=0=\Ext_R^{n-1}(\omega,L)$, we have the following commutative diagram with exact rows:
$$\xymatrix{&\Ext_R^{n}(\omega,U^{'}) \ar@{=}[d] \ar[r]^{\Ext_R^{n}(\omega,\alpha)}& \Ext_R^{n}(\omega,U) \ar[d]^{\Ext_R^{n}(\omega,\lambda)} \ar[r]^{\Ext_R^{n}(\omega,\beta)}& \Ext_R^{n}(\omega,N) \ar[d]^{\cong}\ar[r]&0 \\
0\ar[r]&\Ext_R^{n}(\omega,U^{'})\ar[r]^{\Ext_R^{n}(\omega,u)\ \ \ \ \ \ \ \ }&\Ext_R^{n}(\omega,M\oplus W\oplus W^{'})\ar[r]&\Ext_R^{n}(\omega,L\oplus W^{'}).
}$$
So $\Ext^n_R(\omega,\lambda)$ and $\Ext_R^{n}(\omega,f)$ are bijective. The proof is finished.
\end{proof}

Dual to Theorem 6.3, we have the following

\begin{thm} \label{thm: 6.4} Let $N\in \Mod S$ and $n\geqslant 1$. If $\Ecograde_\omega\Tor^S_i(\omega,N)\geqslant i$ for any $1\leqslant i\leqslant n$,
then there exists a module $V\in \Mod S$ and a homomorphism $g: N\rightarrow V$ in $\Mod S$ satisfying the following properties:
\begin{enumerate}
\item $\mathcal {I}_\omega(S)$-$\pd_SV\leqslant n$, and
\item $\Tor_i^S(\omega,g)$ is bijective for any $1\leqslant i\leqslant n$.
\end{enumerate}
\end{thm}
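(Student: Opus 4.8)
The plan is to dualize the proof of Theorem 6.3 clause by clause, through the exchange of data attached to $\omega$: replace $(-)_*=\Hom_R(\omega,-)$ by $\omega\otimes_S-$, $\Ext^i_R(\omega,-)$ by $\Tor^S_i(\omega,-)$, projective presentations by injective copresentations and minimal injective resolutions by projective resolutions, the class $\mathcal{P}_\omega(R)=\Add_R\omega$ by $\mathcal{I}_\omega(S)=\Prod_SE_*$ (Proposition 3.4), the counit maps $\theta$ by the unit maps $\mu$ (so that the instances of Lemma 6.1(1) become instances of Lemma 6.1(2)), and pull-back diagrams by push-outs. Throughout, exactly as in the proof of Theorem 6.3, one uses that $\omega_S$ is finitely presented, together with $\Tor^S_{j\geqslant1}(\omega,I_*)=0$ and the fact that $\theta_I\colon\omega\otimes_SI_*\to I$ is an isomorphism for every injective left $R$-module $I$ (\cite[Lemma 2.5(2)]{TH}). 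The argument proceeds by induction on $n$.

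For $n=1$, choose an injective copresentation $0\to\Tor^S_1(\omega,N)\to E^0\xrightarrow{d}E^1$ in $\Mod R$ and apply $(-)_*$; the hypothesis $\Ecograde_\omega\Tor^S_1(\omega,N)\geqslant1$ says precisely that $\Hom_R(\omega,\Tor^S_1(\omega,N))=0$, so $d_*$ is a monomorphism and $V:=\Coker d_*$ fits in a short exact sequence $0\to(E^0)_*\to(E^1)_*\to V\to0$ with $(E^0)_*,(E^1)_*\in\mathcal{I}_\omega(S)$, whence $\mathcal{I}_\omega(S)$-$\pd_SV\leqslant1$. To build $g$, take $0\to\Omega\xrightarrow{e}P\to N\to0$ with $P$ projective; since $\Tor^S_1(\omega,P)=0$, the connecting map of the $\Tor$ long exact sequence is a monomorphism $\delta\colon\Tor^S_1(\omega,N)\to\omega\otimes_S\Omega$ with $(1_\omega\otimes e)\delta=0$. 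Using injectivity of $E^0$ and $E^1$, extend $\delta$ to a commutative square comparing $1_\omega\otimes e$ with $d$; applying $(-)_*$, precomposing with $\mu_\Omega$ and $\mu_P$, and using naturality of $\mu$, one obtains a commutative square
\[
\xymatrix{
\Omega \ar[r]^{e} \ar[d] & P \ar[d]\\
(E^0)_* \ar[r]^{d_*} & (E^1)_*
}
\]
whose right-hand vertical, composed with $(E^1)_*\to V$, kills $\Im e$ and therefore factors through a morphism $g\colon N\to V$; this is the dual of Diagram (6.2). Bijectivity of $\Tor^S_1(\omega,g)$ is then obtained by applying $\omega\otimes_S-$ to the square and invoking Lemma 6.1(2) together with the isomorphisms $\theta_{E^i}\colon\omega\otimes_S(E^i)_*\to E^i$ and the vanishing $\Tor^S_{\geqslant1}(\omega,(E^i)_*)=0$: these identify $\Tor^S_1(\omega,V)$ with $\Ker d=\Tor^S_1(\omega,N)$ compatibly with $\Tor^S_1(\omega,g)$, just as applying $(-)_*$ to Diagram (6.2) recovers Diagram (6.3) in the proof of Theorem 6.3.

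For $n\geqslant2$, the induction hypothesis yields $g'\colon N\to V'$ with $\mathcal{I}_\omega(S)$-$\pd_SV'\leqslant n-1$ and $\Tor^S_i(\omega,g')$ bijective for $1\leqslant i\leqslant n-1$. Dualizing each move of the inductive part of Theorem 6.3: choose a $\Hom_S(\mathcal{I}_\omega(S),-)$-exact exact sequence $0\to X\to Z\xrightarrow{\pi}V'\to0$ with $Z\in\mathcal{I}_\omega(S)$ and $\mathcal{I}_\omega(S)$-$\pd_SX\leqslant n-2$ (available because, dually to the proof of Lemma 4.6, $\mathcal{I}_\omega(S)$ is $\mathcal{I}_\omega(S)$-resolving with $\mathcal{I}_\omega(S)$ a proper generator), and form the epimorphism $(g',-\pi)\colon N\oplus Z\twoheadrightarrow V'$ with kernel $L'$; from $\Tor^S_{\geqslant1}(\omega,Z)=0$, $\Tor^S_{\geqslant n}(\omega,V')=0$ and the bijectivity of the $\Tor^S_i(\omega,g')$, one gets $\Tor^S_i(\omega,L')=0$ for $1\leqslant i\leqslant n-1$ and $\Tor^S_n(\omega,L')\cong\Tor^S_n(\omega,N)$. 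Next, from an injective coresolution of $\Tor^S_n(\omega,N)$ of length $n$, applying $(-)_*$ and using $\Ecograde_\omega\Tor^S_n(\omega,N)\geqslant n$ to force its first $n$ spots exact, the cokernel at the last spot is a module $V''$ with $\mathcal{I}_\omega(S)$-$\pd_SV''\leqslant n$, $\Tor^S_i(\omega,V'')=0$ for $1\leqslant i\leqslant n-1$, and $\Tor^S_n(\omega,V'')\cong\Tor^S_n(\omega,N)$ — the dual of the module built out of $\Ext^n_R(\omega,M)$ in Theorem 6.3. Lifting along a projective resolution of $L'$ and using injectivity of the chosen injective coresolution of $\Tor^S_n(\omega,N)$, one constructs, dually to Diagrams (6.4) and (6.5), a morphism $h\colon L'\to V''$ with $\Tor^S_n(\omega,h)$ bijective. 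Finally, assembling $g$ via a push-out diagram dual to the pull-back at the end of the proof of Theorem 6.3 produces $V$ fitting into a $\Hom_S(\mathcal{I}_\omega(S),-)$-exact exact sequence $0\to V''\to V\to V'\to0$, so that $\mathcal{I}_\omega(S)$-$\pd_SV\leqslant n$ by the dual of \cite[Lemma 8.2.1]{EJ}, together with $g\colon N\to V$, the composite of the canonical inclusion of $N$ into the relevant finite direct sum with the structural map of the push-out; the diagram chases dual to those concluding the proof of Theorem 6.3 then show that $\Tor^S_i(\omega,g)$ is bijective for $1\leqslant i\leqslant n$.

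I expect the only genuine difficulty, exactly as in Theorem 6.3, to be the bookkeeping that upgrades $\Tor^S_i(\omega,g)$ from a comparison map to an \emph{isomorphism}: one must verify that the maps produced by injectivity, once twisted by the unit maps $\mu$ and then tensored back by $\omega\otimes_S-$ using Lemma 6.1(2), assemble into the identity on $\Tor^S_i(\omega,N)$. This is the precise dual of the computations surrounding Diagrams (6.1)--(6.5); carrying them out in the reverse direction, while keeping track of which sequences are $\Hom_S(\mathcal{I}_\omega(S),-)$-exact so that $\omega\otimes_S-$ stays exact on them, is where essentially all of the work lies.
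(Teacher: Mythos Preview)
Your proposal is correct and follows exactly the approach the paper intends: the paper states Theorem~6.4 with the single sentence ``Dual to Theorem~6.3, we have the following'' and gives no further proof, so your detailed dualization---exchanging $(-)_*$ with $\omega\otimes_S-$, $\Ext$ with $\Tor$, injective resolutions with projective ones, $\mathcal{P}_\omega(R)$ with $\mathcal{I}_\omega(S)$, $\theta$ with $\mu$, Lemma~6.1(1) with Lemma~6.1(2), and the pull-back with a push-out---is precisely what the paper leaves to the reader. Your identification of the bookkeeping around the dual of Diagrams~(6.1)--(6.5) as the only substantive work is also accurate.
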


In the rest of this section, we give several applications of Theorems 6.3 and 6.4.

Let $\Lambda$ be an artin $R$-algebra over a
commutative artin ring $R$ and $\mod \Lambda$ the class of
finitely generated left $\Lambda$-modules. It is well known that the ordinary Matlis duality functor $D(-)$ induces a duality between $\mod \Lambda$ and $\mod \Lambda^{op}$.
Recall from \cite{H} that $\Lambda$ is called \textbf{right quasi Auslander $n$-Gorenstein}
provided $\fd_{\Lambda^{op}}I^i(\Lambda_{\Lambda})\leqslant i+1$
for any $0\leqslant i\leqslant n-1$.
As an application of Theorem 6.3, we get the following

\begin{cor} \label{cor: 6.5} Let $\Lambda$ be a right quasi Auslander $n$-Gorenstein artin algebra and $M\in\mod \Lambda$.
Then there exists a module $U\in \mod \Lambda$ and a homomorphism $f: U\rightarrow M$ in $\mod \Lambda$ satisfying the following properties:
\begin{enumerate}
\item $\id_{\Lambda}U\leqslant n$, and
\item $\Ext^i_{\Lambda}(D(\Lambda),f)$ is bijective for any $1\leqslant i\leqslant n$.
\end{enumerate}
\end{cor}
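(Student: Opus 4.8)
The plan is to deduce Corollary 6.5 directly from Theorem 6.3 by checking that its hypothesis on $\Tor$-cogrades is automatically satisfied for the semidualizing bimodule $\omega = D(\Lambda)$ when $\Lambda$ is right quasi Auslander $n$-Gorenstein. First I would record the standard identifications over an artin algebra: for $\omega = D(\Lambda)$ one has $\Hom_\Lambda(\omega, -) \cong D(\Lambda) \otimes_\Lambda - $ up to the relevant adjunctions, so that for $M \in \mod \Lambda$ the module $\Ext^i_\Lambda(\omega, M)$ is computed from the minimal injective resolution of $M$, and $\mathcal{I}_\omega(S) = \mathcal{I}_{D(\Lambda)}(\Lambda) $ is (by Proposition 3.4(3)) exactly $\Prod_\Lambda E_* = \add_\Lambda \Lambda$, i.e. the class of projective $\Lambda$-modules; dually $\mathcal{P}_\omega(\Lambda) = \Add_\Lambda \omega$ is the class of injective $\Lambda$-modules. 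Hence the conclusion ``$\mathcal{P}_\omega(\Lambda)$-$\id_\Lambda U \le n$'' of Theorem 6.3 becomes ``$\id_\Lambda U \le n$'', which is precisely statement (1) of the corollary, and statement (2) is literally Theorem 6.3(2).

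It therefore remains to verify the numerical hypothesis: $\Tcograde_\omega \Ext^i_\Lambda(\omega, M) \ge i$ for all $1 \le i \le n$ and all $M \in \mod\Lambda$. The key translation is that, again by the Matlis-duality identifications, $\Tor$-cograde with respect to $D(\Lambda)$ over $\Lambda$ corresponds under $D$ to ordinary grade over $\Lambda^{op}$, so the condition $\Tcograde_\omega X \ge i$ amounts to $\fd_{\Lambda^{op}}$ of a certain flat-dimension statement, and the chain of inequalities $\fd_{\Lambda^{op}} I^j(\Lambda_\Lambda) \le j+1$ defining right quasi Auslander $n$-Gorensteinness is exactly what is needed to push the relevant vanishing through the injective resolution of $M$. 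Concretely I would: (a) apply $\Hom_\Lambda(\omega, -)$ to the minimal injective resolution $0 \to M \to I^0(M) \to I^1(M) \to \cdots$, noting $\Ext^i_\Lambda(\omega, M)$ is the $i$-th cohomology of the resulting complex of modules in $\mathcal{I}_\omega(\Lambda)$; (b) use the flat-dimension bounds on $I^j(\Lambda_\Lambda)$ — which control $\fd_{\Lambda^{op}}$ of the injective $\Lambda$-modules appearing — to bound $\Tcograde_\omega$ of each cohomology module from below by $i$, via the standard fact that a subquotient in homological degree $i$ of a complex built from objects of $\Tor$-cograde $\ge$ (that degree) inherits cograde $\ge i$; (c) invoke Theorem 6.3.

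I expect the main obstacle to be step (b): carefully tracking the exact relationship between the flat dimensions $\fd_{\Lambda^{op}} I^j(\Lambda_\Lambda)$ and the $\Tor$-cogrades $\Tcograde_\omega \Ext^i_\Lambda(\omega, M)$. One must pass through the duality $D$, use that $D$ sends injective resolutions to projective resolutions and $\otimes$ to $\Hom$, and then recognize that the quasi Auslander condition is precisely the ``cograde-increasing'' hypothesis of \cite{F}-type approximation theorems phrased on the injective-resolution side. Everything else is a matter of substituting $\id$ for $\mathcal{P}_\omega(\Lambda)$-$\id$ and $\mod\Lambda$ for $\Mod R$ in the statement of Theorem 6.3, together with the observation that Theorem 6.3's construction stays within $\mod\Lambda$ because it is built from projective presentations of the finitely generated modules $\Ext^i_\Lambda(\omega, M)$ and finite direct sums of copies of the finitely generated module $\omega$.
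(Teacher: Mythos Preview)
Your overall strategy---verify the $\Tor$-cograde hypothesis of Theorem 6.3 for $\omega=D(\Lambda)$ and observe that $\mathcal{P}_{D(\Lambda)}(\Lambda)=\Add_\Lambda D(\Lambda)$ is the class of injectives so that $\mathcal{P}_\omega$-$\id$ becomes ordinary $\id$---is exactly the paper's. The observation that the construction in Theorem 6.3 stays inside $\mod\Lambda$ is also correct and needed.

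The gap is step (b). Your sketch conflates two different injective resolutions: the terms $I^j(\Lambda_\Lambda)$ in the definition of ``right quasi Auslander $n$-Gorenstein'' are injective \emph{right} $\Lambda$-modules, while the $I^j(M)$ appearing in your step (a) are injective \emph{left} $\Lambda$-modules; the flat-dimension bounds on the former do not directly say anything about the latter. Moreover, the ``standard fact'' you invoke about subquotients inheriting $\Tor$-cograde cannot work as stated: after applying $(-)_*$ the terms of your complex are \emph{projective} left $\Lambda$-modules, which have $\Tor$-cograde equal to $0$, not $\geqslant i$. The paper avoids this entirely by a direct duality computation: one has
\[
\Ext^i_\Lambda(D(\Lambda),M)\cong\Ext^i_{\Lambda^{op}}(D(M),\Lambda)
\quad\text{and}\quad
\Tor_j^\Lambda(D(\Lambda),-)\cong D\bigl(\Ext^j_\Lambda(-,\Lambda)\bigr),
\]
so $\Tcograde_{D(\Lambda)}\Ext^i_\Lambda(D(\Lambda),M)=\grade_\Lambda\Ext^i_{\Lambda^{op}}(D(M),\Lambda)$. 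The required inequality $\grade_\Lambda\Ext^i_{\Lambda^{op}}(N,\Lambda)\geqslant i$ for all $N\in\mod\Lambda^{op}$ and $1\leqslant i\leqslant n$ is then precisely the content of Auslander--Reiten \cite[Theorem 4.7]{AR1}, which characterizes the right quasi Auslander $n$-Gorenstein condition. In other words, the passage from the flat-dimension hypothesis to the cograde condition is a nontrivial theorem that must be cited, not a routine bookkeeping exercise with the resolution of $M$.
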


\begin{proof} Let $M\in\mod \Lambda$ and $i,j\geqslant 0$. Then we have
\begin{align*}
&\ \ \  \Ext^i_{\Lambda}(D(\Lambda),M)\\
&\cong \Ext^i_{\Lambda}(D(\Lambda),D(D(M)))\\
& \cong D(\Tor_i^{\Lambda}(D(M),D(\Lambda))) \ \text{(by \cite[Chapter VI, Proposition 5.1]{CE})}\\
& \cong D(D(\Ext^i_{\Lambda^{op}}(D(M),\Lambda))) \ \text{(by \cite[Chapter VI, Proposition 5.3]{CE})}\\
& \cong \Ext^i_{\Lambda^{op}}(D(M),\Lambda).
\end{align*}
So for any $i\geqslant 1$ and $j\geqslant 0$, we have
\begin{align*}
&\ \ \ \Tor_j^{\Lambda}(D(\Lambda),\Ext^i_{\Lambda}(D(\Lambda),M))\\
&\cong \Tor_j^{\Lambda}(D(\Lambda),\Ext^i_{\Lambda^{op}}(D(M),\Lambda))\\
& \cong D(\Ext_{\Lambda}^j(\Ext_{\Lambda^{op}}^i(D(M),\Lambda), \Lambda) \ \text{(by \cite[Chapter VI, Proposition 5.3]{CE})}.
\end{align*}
Since $\Lambda$ is right quasi Auslander $n$-Gorenstein, $\grade_{\Lambda} \Ext_{\Lambda^{op}}^i(D(M),\Lambda)\geqslant i$ for any $1\leqslant i\leqslant n$
by \cite[Theorem 4.7]{AR1}.
It follows from the above argument that $\Tcograde_{D(\Lambda)}\Ext^i_{\Lambda}(D(\Lambda),M)\geqslant i$
for any $1\leqslant i\leqslant n$. In addition, notice that $D(\Lambda)$
is an injective cogenerator for $\Mod \Lambda$, so $\mathcal{P}_{D(\Lambda)}(\Lambda)$-$\id_{\Lambda}X=\id_{\Lambda}X$ for any $X\in \mod \Lambda$.
Now the assertion follows from Theorem 6.3.
\end{proof}

We give the second application of Theorems 6.3 (and 6.4) as follows.

\begin{cor} \label{cor: 6.6} Let $M\in \Mod R$ and $N\in \Mod S$. Then for any $n\geqslant 0$ we have
\begin{enumerate}
\item If $\Tcograde_\omega\Ext_R^i(\omega,M)\geqslant i+1$ for any $0\leqslant i\leqslant n$, then $\Ecograde_\omega M\geqslant n+1$.
\item If $\Ecograde_\omega\Tor^S_i(\omega,N)\geqslant i+1$ for any $0\leqslant i\leqslant n$, then $\Tcograde_CN\geqslant n+1$.
\end{enumerate}
\end{cor}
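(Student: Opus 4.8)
The plan is to prove both (1) and (2) by induction on $n$, with the base case an immediate consequence of Lemma 6.1 and the inductive step built on the approximation theorems. I will describe (1) in detail; part (2) is the formal dual, obtained by interchanging $R$ with $S$, the functor $\Hom_R(\omega,-)$ with $\omega\otimes_S-$, $\Ext$ with $\Tor$, Theorem 6.3 with Theorem 6.4, and Lemma 6.1(1) with Lemma 6.1(2) throughout. For the base case $n=0$ of (1) the hypothesis reads $\omega\otimes_S\Hom_R(\omega,M)=0$; by Lemma 6.1(1) the map $\mu_{M_*}$ is a split monomorphism into $(\omega\otimes_SM_*)_*=\Hom_R(\omega,0)=0$, whence $\Hom_R(\omega,M)=M_*=0$, i.e. $\Ecograde_\omega M\geqslant 1$.

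For the inductive step, assume $n\geqslant 1$ and that (1) holds for $n-1$. Restricting the hypothesis to $0\leqslant i\leqslant n-1$ and applying the induction hypothesis gives $\Ext^i_R(\omega,M)=0$ for $0\leqslant i\leqslant n-1$; in particular $\Hom_R(\omega,M)=0$, and it remains to prove $E:=\Ext^n_R(\omega,M)=0$. Using $\Hom_R(\omega,M)=0$ and the vanishing of $\Ext^i_R(\omega,M)$ for $1\leqslant i\leqslant n-1$, one applies $\Hom_R(\omega,-)$ to the truncated minimal injective resolution $0\to M\to I^0(M)\to\cdots\to I^{n-1}(M)\to\coOmega^n(M)\to 0$ and finds that the resulting sequence $0\to\Hom_R(\omega,I^0(M))\to\cdots\to\Hom_R(\omega,I^{n-1}(M))\to\Hom_R(\omega,\coOmega^n(M))\to E\to 0$ is exact. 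Setting $K=\Im\bigl(\Hom_R(\omega,I^{n-1}(M))\to\Hom_R(\omega,\coOmega^n(M))\bigr)$, this gives a short exact sequence $0\to K\to\Hom_R(\omega,\coOmega^n(M))\to E\to 0$.

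Now apply $\omega\otimes_S-$ to this last sequence. Since $\Tor^S_{\geqslant 1}(\omega,\Hom_R(\omega,I))=0$ for every injective $R$-module $I$, and since by hypothesis $\omega\otimes_SE=0$ and $\Tor^S_1(\omega,E)=0$, the inclusion $K\hookrightarrow\Hom_R(\omega,\coOmega^n(M))$ becomes an isomorphism after $\omega\otimes_S-$, hence also after $(-)_*$. By Lemma 6.1(1) the transformation $\mu$ is split monic on every module of the form $\Hom_R(\omega,-)$; feeding this (for $K$) into the naturality square of $\mu$ for the inclusion $K\hookrightarrow\Hom_R(\omega,\coOmega^n(M))$ shows that this inclusion is split, so $E$ is a direct summand of $\Hom_R(\omega,\coOmega^n(M))$. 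Since $\mu_{\Hom_R(\omega,\coOmega^n(M))}$ is split monic and $\mu$ is additive, $\mu_E$ is split monic; as $\omega\otimes_SE=0$, it follows that $E=0$. When $n=1$ this goes through verbatim: there $K=\Hom_R(\omega,I^0(M))$ is literally of the form $\Hom_R(\omega,-)$, so Lemma 6.1(1) applies to $K$ directly, and the argument is completely explicit. For $n>1$ one may instead first invoke Theorem 6.3 (the hypothesis gives $\Tcograde_\omega\Ext^i_R(\omega,M)\geqslant i+1\geqslant i$ for $1\leqslant i\leqslant n$) to replace $M$ by an approximation $f\colon U\to M$ with $\mathcal P_\omega(R)$-$\id_RU\leqslant n$ and $\Ext^i_R(\omega,f)$ bijective for $1\leqslant i\leqslant n$, reducing by dimension shifting down a $\mathcal P_\omega(R)$-coresolution of $U$ to the case $n=1$.

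The main obstacle is precisely the point glossed over above for $n>1$: there $K$ is only a subquotient of the modules $\Hom_R(\omega,I^j(M))$, not literally one of them, so one must verify — either by working carefully down the exact sequence $0\to\Hom_R(\omega,I^0(M))\to\cdots\to\Hom_R(\omega,I^{n-1}(M))\to K\to 0$, or by passing to the approximation of Theorem 6.3 — that $\mu_K$ is nevertheless split monic, equivalently that the extension $0\to K\to\Hom_R(\omega,\coOmega^n(M))\to E\to 0$ splits. Once that is in hand the remainder is routine dimension shifting and diagram chasing, and the dual statements in (2) follow by the symmetric argument with Theorem 6.4 and Lemma 6.1(2).
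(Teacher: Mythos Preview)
Your base case $n=0$ is exactly the paper's, and your $n=1$ argument via the injective resolution is correct and a pleasant variant. The gap is in the inductive step for $n\geqslant 2$. As you yourself flag, the module $K$ is then no longer literally of the form $X_*$, so Lemma 6.1(1) does not give $\mu_K$ split monic; and your proposed repair---``reducing by dimension shifting down a $\mathcal{P}_\omega(R)$-coresolution of $U$ to the case $n=1$''---does not work as written. If you shift down to the cosyzygy $U_{n-1}=\Ker(W_{n-1}\to W_n)$ so that $\mathcal{P}_\omega(R)$-$\id_RU_{n-1}\leqslant 1$ and $\Ext^1_R(\omega,U_{n-1})\cong E$, you lose the crucial $i=0$ hypothesis: since $U_{n-1}\in\mathcal{B}_\omega(R)$ one has $\omega\otimes_S(U_{n-1})_*\cong U_{n-1}$, which is nonzero in general, so your $n=1$ argument (which needs $\omega\otimes_S(-)_*=0$ on the module in question) cannot be applied to $U_{n-1}$.

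The paper also invokes Theorem 6.3, but uses $U$ differently and does not try to reduce to $n=1$. After obtaining $f\colon U\to M$ with $\Ext^i_R(\omega,f)$ bijective for $1\leqslant i\leqslant n$ and a coresolution $0\to U\to W_0\to\cdots\to W_n\to 0$, it applies $(-)_*$ to get the exact sequence $0\to U_*\to{W_0}_*\to\cdots\to{W_n}_*\to E\to 0$ (using $\Ext^{1\leqslant i\leqslant n-1}_R(\omega,U)=0$). Because $\Tcograde_\omega E\geqslant n+1$, tensoring with $\omega$ keeps this exact at $\omega\otimes_S{W_0}_*$; comparing via the isomorphisms $\theta_{W_i}$ with the original coresolution forces $\theta_U\colon\omega\otimes_SU_*\to U$ to be \emph{epic}. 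Now the naturality square $f\cdot\theta_U=\theta_M\cdot(1_\omega\otimes f_*)$ factors through $\omega\otimes_SM_*=0$, so $f\cdot\theta_U=0$; since $\theta_U$ is epic, $f=0$. Hence the bijection $\Ext^n_R(\omega,f)$ is the zero map, giving $E=\Ext^n_R(\omega,M)=0$. This use of the epicity of $\theta_U$ together with $\omega\otimes_SM_*=0$ is the missing idea in your proposal.
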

\begin{proof} (1) We proceed by induction on $n$.

Let $n=0$ and $\omega\otimes_SM_*=0$. Since $(\theta_M)_*\cdot \mu_{M_*}=1_{M_*}$ by Lemma 6.1(1), $\mu_{M_*}$ is a split monomorphism and $M_*=0$.

Now suppose $n\geqslant 1$. By the induction hypothesis, we have that $\Ecograde_\omega M\geqslant n$
and $\Ext_R^{0\leqslant i\leqslant n-1}(\omega,M)=0$. It is left to show $\Ext_R^n(\omega,M)=0$.
By Theorem 6.3, there exists a module $U\in \Mod R$ and a homomorphism $f: U\rightarrow M$ in $\Mod R$
such that $\mathcal{P}_\omega(R)$-$\id_RU\leqslant n$ and $\Ext^i_R(\omega,f)$ is bijective for any $1\leqslant i\leqslant n$.
It follows that $\Ext_R^{1\leqslant i\leqslant n-1}(\omega,U)=0$. Let
$$0\rightarrow U\stackrel{g}{\longrightarrow} W_0 \to W_1 \to \cdots  \to W_n\to 0$$
be an exact sequence in $\Mod R$ with all $W_i$ in $\mathcal{P}_\omega(R)$. Applying the functor $(-)_*$ to it we get an exact sequence:
$$0\to U_*\to {W_0}_*\to {W_1}_* \to\cdots \to {W_n}_*\to \Ext_R^n(\omega,U)\to 0$$
in $\Mod S$. Since $\Ext_R^n(\omega,M)\cong \Ext_R^n(\omega,U)$, we have $\Tcograde_\omega\Ext_R^n(\omega,U)\geqslant n+1$ by assumption.
Then we get the following commutative diagram with exact rows:
$$\xymatrix{ &
\omega\otimes_RU_* \ar[r] \ar[d]^{\theta_U}&
\omega\otimes_SW_0{_*} \ar[r] \ar[d]^{\theta_{W_0}}& \omega\otimes_SW_1{_*} \ar[r] \ar[d]^{\theta_{W_1}}
&\cdots \ar[r] & \omega\otimes_SW_n{_*} \ar[r] \ar[d]^{\theta_{W_n}}& 0 \\
0 \ar[r]&  U \ar[r] & W_0 \ar[r] &  W_1 \ar[r] &\cdots \ar[r]& W_n \ar[r]& 0.}$$
Because all $\theta_{W_i}$ are bijective, $\theta_{U}$ is epic.
Note that we have the following commutative diagram:
$$\xymatrix{\omega\otimes_SU_* \ar[r]^{1_\omega\otimes f_*\ \ } \ar[d]^{\theta_U}& \omega\otimes_SM_*\ar[d]^{\theta_{M}}\\
U \ar[r]^f & M.}$$
Because $\omega\otimes_SM_*=0$ by assumption. $f\cdot \theta_U=0$. But $\theta_U$ is epic, so $f=0$.
It follows that the bijection $\Ext_R^n(\omega,f)$ is zero and $\Ext_R^n(\omega,M)=0$.

(2) The proof is dual to that of (1), so we omit it.
\end{proof}

Before giving the third application of Theorem 6.3, we need the following

\begin{prop} \label{prop: 6.7} Let $$ V_1\buildrel {g} \over \longrightarrow V_0\to N \to 0\eqno{(6.4)}$$
be an exact sequence in $\Mod S$ satisfying the following conditions:
\begin{enumerate}
\item Both $\mu_{V_0}$  and $\mu_{V_1}$ are isomorphisms.
\item $\Ext^1_R(\omega,\omega\otimes_S{V_0})=0$ and $\Ext^1_R(\omega,\omega\otimes_S{V_1})=0=\Ext^2_R(\omega,\omega\otimes_S{V_1})$.
\end{enumerate}
Then there exists an exact sequence:
$$0\rightarrow \Ext_R^1(\omega,L)\rightarrow N\stackrel{\mu_{N}}{\longrightarrow}
(\omega\otimes_SN)_*\rightarrow\Ext_R^2(\omega,L)\rightarrow 0,$$ where $L=\Ker (1_\omega\otimes g)$.
\end{prop}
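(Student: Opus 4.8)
The plan is to apply the functor $\omega\otimes_S-$ to the presentation (6.4), then apply $(-)_*=\Hom_R(\omega,-)$, and finally compare the outcome with $N$ through $\mu_N$, reading off $\Ker\mu_N$ and $\Coker\mu_N$ from suitable long exact sequences.

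First I would apply $\omega\otimes_S-$ to (6.4); since this functor is right exact, I obtain an exact sequence $\omega\otimes_SV_1\stackrel{1_\omega\otimes g}{\longrightarrow}\omega\otimes_SV_0\to\omega\otimes_SN\to0$ in $\Mod R$. Writing $L=\Ker(1_\omega\otimes g)$ and $K=\Im(1_\omega\otimes g)$, this breaks into two short exact sequences
$$0\to L\to\omega\otimes_SV_1\to K\to0,\qquad 0\to K\to\omega\otimes_SV_0\to\omega\otimes_SN\to0$$
in $\Mod R$. Applying $(-)_*$ to the first one and using $\Ext^1_R(\omega,\omega\otimes_SV_1)=0$, the connecting homomorphism $K_*\to\Ext^1_R(\omega,L)$ is surjective, so $\Ext^1_R(\omega,L)\cong K_*/\Im((\omega\otimes_SV_1)_*\to K_*)$; using in addition $\Ext^2_R(\omega,\omega\otimes_SV_1)=0$ gives $\Ext^1_R(\omega,K)\cong\Ext^2_R(\omega,L)$. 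Applying $(-)_*$ to the second one and using $\Ext^1_R(\omega,\omega\otimes_SV_0)=0$ yields an exact sequence $0\to K_*\to(\omega\otimes_SV_0)_*\stackrel{\psi}{\longrightarrow}(\omega\otimes_SN)_*\to\Ext^1_R(\omega,K)\to0$; note that $(1_\omega\otimes g)_*$ factors as $(\omega\otimes_SV_1)_*\to K_*\hookrightarrow(\omega\otimes_SV_0)_*$, so its image is exactly $\Im((\omega\otimes_SV_1)_*\to K_*)\subseteq K_*=\Ker\psi$.

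Next I would invoke the naturality of $\mu$. Let $\pi\colon V_0\to N$ be the epimorphism of (6.4); then $\mu_N\pi=\psi\,\mu_{V_0}$ and $\mu_{V_0}\,g=(1_\omega\otimes g)_*\,\mu_{V_1}$. Since $\mu_{V_0}$ is an isomorphism and $\pi$ is epic, $\Im\mu_N=\Im(\mu_N\pi)=\Im\psi$, hence $\Coker\mu_N\cong\Coker\psi\cong\Ext^1_R(\omega,K)\cong\Ext^2_R(\omega,L)$. For the kernel, $\Ker(\mu_N\pi)=\mu_{V_0}^{-1}(\Ker\psi)=\mu_{V_0}^{-1}(K_*)$ and $\Ker\pi=\Im g\subseteq\Ker(\mu_N\pi)$, so $\Ker\mu_N\cong\Ker(\mu_N\pi)/\Im g$; transporting along the isomorphism $\mu_{V_0}$ and using that $\mu_{V_1}$ is an isomorphism, $\mu_{V_0}(\Im g)=\Im((1_\omega\otimes g)_*)=\Im((\omega\otimes_SV_1)_*\to K_*)$, so $\Ker\mu_N\cong K_*/\Im((\omega\otimes_SV_1)_*\to K_*)\cong\Ext^1_R(\omega,L)$. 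Combining these two identifications with the tautological exact sequence $0\to\Ker\mu_N\to N\stackrel{\mu_N}{\longrightarrow}(\omega\otimes_SN)_*\to\Coker\mu_N\to0$ gives the asserted four-term exact sequence.

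I expect the kernel computation to be the delicate point: one must check carefully that, under the isomorphism $\mu_{V_0}$, the submodule $\Im g\subseteq V_0$ corresponds precisely to $\Im((\omega\otimes_SV_1)_*\to K_*)$ inside $K_*$, and in particular that $\Im g\subseteq\mu_{V_0}^{-1}(K_*)$; this is exactly where the naturality squares for $\mu$ (together with Lemma 6.1) are used. The three $\Ext$-vanishing hypotheses in (2) are tailored to make the three connecting-map identifications above hold, while the hypotheses that $\mu_{V_0}$ and $\mu_{V_1}$ are isomorphisms are what let me replace the $(\omega\otimes_SV_i)_*$ by $V_i$ and thereby bring $N$ into the picture.
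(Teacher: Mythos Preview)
Your proof is correct and follows essentially the same approach as the paper. Both arguments apply $\omega\otimes_S-$ to the presentation, split the resulting four-term exact sequence at $K=\Im(1_\omega\otimes g)$, apply $(-)_*$ to the two short exact sequences using the three $\Ext$-vanishing hypotheses to identify $\Coker\mu_N\cong\Ext^1_R(\omega,K)\cong\Ext^2_R(\omega,L)$ and $\Ker\mu_N\cong\Ext^1_R(\omega,L)$, and then use naturality of $\mu$ together with the hypothesis that $\mu_{V_0},\mu_{V_1}$ are isomorphisms to link $N$ to this picture. The only cosmetic difference is that the paper packages the kernel computation by introducing the induced map $h\colon\Im g\to K_*$ and applying the snake lemma to a commutative ladder (obtaining $\Ker\mu_N\cong\Coker h$, then showing $\Coker h\cong\Coker{\pi'}_*$), whereas you compute $\Ker\mu_N$ directly from the factorization $\mu_N\pi=\psi\,\mu_{V_0}$; these are the same computation written two ways. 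One small remark: your closing reference to Lemma~6.1 is unnecessary here --- only the naturality of $\mu$ is used, not the identities $(\theta_X)_*\mu_{X_*}=1$ or $\theta_{\omega\otimes Y}(1_\omega\otimes\mu_Y)=1$.
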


\begin{proof} By applying the functor $\omega\otimes_S-$ to (6.4), we get an exact sequence:
$$0\rightarrow L\rightarrow \omega\otimes_S{V_1}\stackrel{1_\omega\otimes g}{\longrightarrow}\omega\otimes_S{V_0}\rightarrow \omega\otimes_SN\rightarrow 0$$ in $\Mod R$.
Let $g=\alpha\cdot\pi$ (where $\pi:
V_1\twoheadrightarrow \Im g$ and $\alpha: \Im
g\rightarrowtail V_0$) and $1_\omega\otimes g=\alpha^{'}\cdot\pi^{'}$
(where $\pi^{'}: \omega\otimes_S{V_1}\twoheadrightarrow \Im (1_\omega\otimes g)$ and
$\alpha^{'}: \Im (1_\omega\otimes g)\rightarrowtail \omega\otimes_S{V_0}$) be the natural
epic-monic decompositions of $g$ and $1_\omega\otimes g$ respectively. Since $\Ext_R^1(\omega,\omega\otimes_S{V_0})=0$, we have the following commutative diagram with exact rows:
{\footnotesize $$\xymatrix{0 \ar[r]& \Im g \ar[r]^{\alpha} \ar@{-->}[d]^{h}& V_0 \ar[r] \ar[d]^{\mu_{V_0}}& N \ar[r] \ar[d]^{\mu_N}& 0\\
0 \ar[r]& (\Im (1_\omega\otimes g))_*\ar[r]^{{\alpha^{'}}_*} &(\omega\otimes_S{V_0})_* \ar[r]& (\omega\otimes_SN)_* \ar[r]& \Ext_R^1(\omega,\Im (1_\omega\otimes g)) \ar[r] &0,}$$}
where $h$ is an induced homomorphism. Then ${\alpha^{'}}_*\cdot h=\mu_{V_0} \cdot \alpha$. In addition, since $\mu_{V_0}$ is an isomorphism by assumption,
by the snake lemma we have $\Coker \mu_N\cong \Ext^1_R(\omega,\Im (1_\omega\otimes g))$ and $\Ker \mu_N\cong \Coker h$.

On the other hand, since $\Ext^1_R(\omega,\omega\otimes_S{V_1})=0=\Ext^2_R(\omega,\omega\otimes_S{V_1})$ by assumption, by applying the functor $(-)_*$ to the exact sequence:
$$0\rightarrow L\rightarrow \omega\otimes_S{V_1}\stackrel{\pi^{'}}{\rightarrow} \Im (1_\omega\otimes g)\rightarrow 0,$$ we get the following exact sequence:
$$0\rightarrow L_*\rightarrow
(\omega\otimes_S{V_1})_*\stackrel{{\pi^{'}}_*}{\longrightarrow}(\Im (1_\omega\otimes g))_{*}\rightarrow
\Ext_R^1(\omega,L)\rightarrow 0$$ and the isomorphism:
$$\Ext_R^1(\omega,\Im (1_\omega\otimes g)) \cong \Ext_R^2(\omega,L).$$ Because
$$\xymatrix{V_1 \ar[r]^{g}
\ar[d]^{\mu_{V_1}} & V_0 \ar[d]^{\mu_{V_0}} \\
(\omega\otimes_S{V_1})_* \ar[r]^{(1_\omega\otimes g)_*} & (\omega\otimes_S{V_0})_*}$$ is a commutative diagram, $(1_\omega\otimes g)_* \cdot \mu_{V_1}= \mu_{V_0} \cdot g$.
Because $1_\omega\otimes g=\alpha^{'}\cdot\pi^{'}$, $(1_\omega\otimes g)_*={\alpha^{'}}_*\cdot{\pi^{'}}_*$. Thus we have
${\alpha^{'}}_*\cdot h\cdot \pi=\mu_{V_0} \cdot \alpha\cdot \pi=\mu_{V_0} \cdot g=(1_\omega\otimes g)_* \cdot \mu_{V_1}=
{\alpha^{'}}_*\cdot{\pi^{'}}_*\cdot \mu_{V_1}$. Because ${\alpha^{'}}_*$ is monic, $h\cdot \pi={\pi^{'}}_*\cdot \mu_{V_1}$.
Notice that $\pi$ is epic and $\mu_{V_1}$ is an isomorphism, so $\Ker \mu_N\cong \Coker h \cong \Coker {\pi^{'}}_*\cong \Ext_R^1(\omega,L)$.
Consequently we obtain the desired exact sequence.
\end{proof}

As a consequence of Proposition 6.7, we have the following

\begin{cor} \label{cor: 6.8} Let $M\in \Mod R$. Then there exists an exact sequence:
$$0\rightarrow \Ext_R^1(\omega,M)\rightarrow \cTr_\omega M\stackrel{\mu_{\cTr_\omega M}}{\longrightarrow}
(\omega\otimes_S\cTr_\omega M)_*\rightarrow
 \Ext_R^2(\omega,M)\rightarrow 0.$$
\end{cor}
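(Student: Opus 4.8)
The plan is to apply Proposition 6.7 to a suitable presentation of the left $S$-module $N = \cTr_\omega M$, chosen so that $\omega \otimes_S N \cong \coOmega^2(M)$-type object and $L = \coOmega^1(M)$ in the right-truncated picture. Recall that $\cTr_\omega M = \Coker f^0_*$, where $f^0 : I^0(M) \to I^1(M)$ is the first map in the minimal injective resolution (1.1). Thus we have an exact sequence
$$
{I^0(M)}_* \stackrel{f^0_*}{\longrightarrow} {I^1(M)}_* \longrightarrow \cTr_\omega M \longrightarrow 0
$$
in $\Mod S$. I would set $V_1 = {I^0(M)}_*$, $V_0 = {I^1(M)}_*$, $g = f^0_*$, and $N = \cTr_\omega M$, so that (6.4) of Proposition 6.7 is exactly this sequence.

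Next I would verify the two hypotheses of Proposition 6.7 for this choice. For (i): since $I^0(M)$ and $I^1(M)$ are injective in $\Mod R$, each lies in $\mathcal{I}_\omega(S)$ after applying $(-)_*$; more precisely, by \cite[Lemma 2.5(2)]{TH} we have $\Hom_R(\omega,I)_*$-adstatic behaviour, i.e. the canonical map $\mu_{{I^j(M)}_*}$ is an isomorphism because ${I^j(M)}_* \in \mathcal{I}_\omega(S)$ and such modules are $\omega$-adstatic (indeed $\omega \otimes_S {I^j(M)}_* \cong I^j(M)$ and $\Tor^S_{\geqslant 1}(\omega, {I^j(M)}_*) = 0$ by \cite[Lemma 2.5(2)]{TH}). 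For (ii): $\omega \otimes_S V_0 = \omega \otimes_S {I^1(M)}_* \cong I^1(M)$ and $\omega \otimes_S V_1 = \omega \otimes_S {I^0(M)}_* \cong I^0(M)$ are injective $R$-modules, hence $\Ext^{\geqslant 1}_R(\omega, -)$ vanishes on them. So both (i) and (ii) hold.

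Now apply the tensor functor $\omega \otimes_S -$ to (6.4): since $\omega \otimes_S {I^j(M)}_* \cong I^j(M)$ naturally (by $\theta_{I^j(M)}$, an isomorphism) and the $\Tor$ terms vanish, the map $1_\omega \otimes f^0_*$ is identified with $f^0 : I^0(M) \to I^1(M)$. Therefore $L := \Ker(1_\omega \otimes f^0_*) \cong \Ker f^0 = \coOmega^0(M) = M$ (using exactness of (1.1) at the $I^0$ spot, where $\Ker f^0 = \Im(M \to I^0(M)) \cong M$), and $\omega \otimes_S N = \omega \otimes_S \cTr_\omega M \cong \Coker f^0 = \coOmega^1(M)$ in the relevant degree—but what matters is just that $L \cong M$. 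Feeding this into the conclusion of Proposition 6.7 gives exactly the desired exact sequence
$$
0 \to \Ext_R^1(\omega, M) \to \cTr_\omega M \stackrel{\mu_{\cTr_\omega M}}{\longrightarrow} (\omega \otimes_S \cTr_\omega M)_* \to \Ext_R^2(\omega, M) \to 0.
$$

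The only genuinely delicate point is the identification $L \cong M$: one must be careful that applying $\omega \otimes_S -$ to the presentation ${I^0(M)}_* \to {I^1(M)}_* \to \cTr_\omega M \to 0$ recovers the honest truncation $I^0(M) \to I^1(M) \to \coOmega^1(M) \to 0$ of the injective resolution, so that its kernel is $M$ rather than some cosyzygy. This follows because $\Tor^S_1(\omega, \cTr_\omega M)$ need not vanish a priori, but the four-term exactness after tensoring only requires the vanishing of the higher $\Tor$'s of the injective-type modules ${I^j(M)}_*$, which is supplied by \cite[Lemma 2.5(2)]{TH}; the possibly-nonzero $\Tor^S_1(\omega, \cTr_\omega M)$ just becomes the kernel $L$, and a short diagram chase comparing with (1.1) shows $L \cong M$. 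Everything else is a direct substitution into Proposition 6.7.
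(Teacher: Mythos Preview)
Your proof is correct and takes exactly the paper's approach: apply Proposition~6.7 with $V_1={I^0(M)}_*$, $V_0={I^1(M)}_*$, $g=f^0_*$, verify the hypotheses via the fact that ${I^j(M)}_*\in\mathcal{A}_\omega(S)$ (the paper cites \cite[Proposition~4.1]{HW} rather than \cite[Lemma~2.5(2)]{TH}, but either route gives that $\mu_{{I^j(M)}_*}$ is an isomorphism and that $\omega\otimes_S{I^j(M)}_*\cong I^j(M)$ is injective), and identify $L\cong M$ via the commuting square with vertical isomorphisms $\theta_{I^0(M)},\theta_{I^1(M)}$. Two inessential slips worth cleaning up: your side remark should read $\omega\otimes_S\cTr_\omega M\cong\Coker f^0\cong\coOmega^2(M)$, not $\coOmega^1(M)$; and in the final paragraph the kernel $L$ is not $\Tor^S_1(\omega,\cTr_\omega M)$ but $M$ itself---you already established this correctly earlier, so that paragraph's heuristic can simply be dropped.
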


\begin{proof} Let $M\in \Mod R$. Then from the exact sequence (1.1) we get the following exact sequence:
$$0\to M_*\to {I^0(M)}_*\buildrel {{f^0}_*} \over \longrightarrow {I^1(M)}_* \to \cTr_\omega M \to 0$$
in $\Mod S$. Consider the following commutative diagram with exact rows:
{\footnotesize $$\xymatrix{0 \ar[r]& \Ker(1_\omega\otimes {f^0}_*) \ar[r] \ar@{-->}[d]^{h}
& \omega\otimes_S{I^0(M)}_* \ar[r]^{1_\omega\otimes {f^0}_*} \ar[d]^{\theta_{I^0(M)}}
& \omega\otimes_S{I^1(M)}_* \ar[r] \ar[d]^{\theta_{I^0(M)}}& \omega\otimes_S\cTr_\omega M \ar[r] & 0\\
0 \ar[r]& M\ar[r] & I^0(M) \ar[r]^{f^0} & I^1(M). & &}$$}
Because $I^0(M),I^1(M)\in \mathcal{B}_\omega(R)$ by \cite[Theorem 6.2]{HW},
both $\theta_{I^0(M)}$ and $\theta_{I^1(M)}$ are isomorphisms. So the induced
homomorphism $h$ is also an isomorphism and $M\cong \Ker(1_\omega\otimes {f^0}_*)$.
Note that ${I^0(M)}_*,{I^1(M)}_*\in \mathcal{A}_\omega(S)$ by \cite[Proposition 4.1]{HW}.
So both $\mu_{{I^0(M)}_*}$ and $\mu_{{I^1(M)}_*}$ are isomomorphisms, and then
the assertion follows from Proposition 6.7.
\end{proof}

We are now in a position to prove the following

\begin{thm} \label{thm: 6.9} For any $n\geqslant 1$, the following statements are equivalent.
\begin{enumerate}
\item $\sEcograde_\omega\Tor_i^S(\omega,N)$ $\geqslant i$ for any $N\in \Mod S$ and $1\leqslant i\leqslant n$.
\item $\sTcograde_\omega\Ext^i_R(\omega,M)$ $\geqslant i$ for any $M\in \Mod R$ and $1\leqslant i\leqslant n$.
\end{enumerate}
\end{thm}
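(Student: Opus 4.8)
The plan is to prove $(1)\Rightarrow(2)$ and to obtain the converse by symmetry: the whole framework (Definition 2.1, the functors $\Hom_R(\omega,-)$ and $\omega\otimes_S-$, and the two approximation theorems 6.3 and 6.4) is self-dual under interchanging $R$ with $S^{op}$, and under this duality statement (1) for $(R,S,\omega)$ becomes statement (2) for the dual datum, so a universal proof of $(1)\Rightarrow(2)$ yields $(2)\Rightarrow(1)$ as well. Assuming (1), I would induct on $n$. Since (1) for $n$ implies (1) for $n-1$, the inductive hypothesis gives $\sTcograde_\omega\Ext^i_R(\omega,M')\geqslant i$ for all $M'\in\Mod R$ and $1\leqslant i\leqslant n-1$, so the remaining task is to show, for every $M\in\Mod R$, that $\Tor^S_j(\omega,Y)=0$ whenever $Y\subseteq\Ext^n_R(\omega,M)$ is a submodule and $0\leqslant j\leqslant n-1$.

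The core of the argument is to re-present $\Ext^n_R(\omega,M)$ so that hypothesis (1) can be brought to bear. Dimension shifting along the minimal injective coresolution (1.1) gives $\Ext^n_R(\omega,M)\cong\Ext^1_R(\omega,\coOmega^{n-1}(M))$, and Corollary 6.8 applied to $\coOmega^{n-1}(M)$ identifies this with $\Ker\mu_{\cTr_\omega(\coOmega^{n-1}(M))}$; thus $Y$ is in particular a submodule of the $S$-module $\cTr_\omega(\coOmega^{n-1}(M))$, and the task reduces to bounding $\Tor^S_{<n}(\omega,-)$ on submodules of a cotranspose. To handle \emph{arbitrary} such submodules I would, in parallel, first derive the non-strong bound $\Tcograde_\omega\Ext^i_R(\omega,M)\geqslant i$ for $1\leqslant i\leqslant n$ and all $M$, then apply Theorem 6.3 to get $f\colon U\to M$ with $\mathcal P_\omega(R)$-$\id_R U\leqslant n$ and $\Ext^i_R(\omega,f)$ bijective for $1\leqslant i\leqslant n$; from a coresolution $0\to U\to W^0\to\cdots\to W^n\to0$ with all $W^k\in\Add_R\omega\subseteq{_R\omega^{\perp}}$, applying $(-)_*$ (which annihilates $\Ext^{\geqslant1}_R(\omega,W^k)$) and using $\Ext^i_R(\omega,M)\cong\Ext^i_R(\omega,U)$ produces an exact sequence $0\to U_*\to P^0\to\cdots\to P^n\to\Ext^n_R(\omega,M)\to0$ of $S$-modules with each $P^k=(W^k)_*$ projective; applying $\omega\otimes_S-$ back and using that each $\theta_{W^k}\colon\omega\otimes_S P^k\to W^k$ is an isomorphism while the original coresolution is exact already yields $\Tor^S_j(\omega,\Ext^n_R(\omega,M))=0$ for $0\leqslant j\leqslant n-1$. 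Writing $\Ext^n_R(\omega,M)=P^n/Z$ with $Z=\Im(P^{n-1}\to P^n)$ and lifting $Y$ to $Z\subseteq\widetilde Y\subseteq P^n$ exhibits the complementary quotient $\Ext^n_R(\omega,M)/Y$ as $P^n/\widetilde Y$, a quotient of the projective $S$-module $P^n$; a dimension shift over $0\to\widetilde Y\to P^n\to\Ext^n_R(\omega,M)/Y\to0$ and over $0\to Z\to\widetilde Y\to Y\to0$, combined with the cotranspose identities of \cite[Corollary 3.4(3)]{TH} (which realize the surviving syzygies as quotients of suitable $\Tor^S_n(\omega,-)$-modules) and with hypothesis (1) (which forces their $\Ext$-cograde to be $\geqslant n$), then collapses $\Tor^S_{<n}(\omega,Y)$.

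I expect the main obstacle to be exactly this last passage — upgrading ``$\Tor^S_{<n}(\omega,\Ext^n_R(\omega,M))=0$'' to the \emph{strong} statement ``$\Tor^S_{<n}(\omega,Y)=0$ for all submodules $Y$'' — because neither $\Hom_R(\omega,-)$ nor $\omega\otimes_S-$ respects sub- or quotient modules, so hypothesis (1), phrased in terms of quotients of $\Tor^S_i(\omega,N)$, cannot be applied to a submodule of $\Ext^n_R(\omega,M)$ without first re-presenting it. The device, as sketched, is to use either the cotranspose identification or the approximation $U$ of Theorem 6.3 to rewrite those submodules and their complementary quotients as kernels and cokernels of maps between projective $S$-modules and $\Tor^S_n(\omega,-)$-type modules, where (1) does apply. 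A secondary, purely numerical point that must be watched is that each dimension shift has to displace the relevant (co)syzygy by exactly the amount needed to keep the bound ``$\geqslant i$'' aligned with the homological degree $i$; an off-by-one anywhere breaks the induction.
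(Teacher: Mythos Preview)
Your overall architecture (induction on $n$, the dual symmetry for $(2)\Rightarrow(1)$) matches the paper, but there is a genuine circularity in your inductive step. You write that you would ``first derive the non-strong bound $\Tcograde_\omega\Ext^i_R(\omega,M)\geqslant i$ for $1\leqslant i\leqslant n$ and all $M$, then apply Theorem~6.3'' at level $n$; but the hypothesis of Theorem~6.3 at level $n$ \emph{is} precisely this non-strong bound, and your only sketched argument for it (tensoring the $\mathcal P_\omega(R)$-coresolution of $U$ back with $\omega$) already presupposes the existence of that $U$. The non-strong bound at level $n$ is the special case $Y=\Ext^n_R(\omega,M)$ of what you are trying to prove, and there is no cheaper route to it: even with the full inductive hypothesis you only get $\Tcograde_\omega\Ext^n_R(\omega,M)\geqslant n-1$ via $\Ext^n_R(\omega,M)\cong\Ext^{n-1}_R(\omega,\coOmega^1(M))$, and the missing vanishing of $\Tor^S_{n-1}(\omega,-)$ is exactly the hard part.

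The paper breaks this circle by invoking Theorem~6.3 at level $n-1$ (whose hypothesis \emph{is} supplied by the inductive step), obtaining $f\colon U\to M$ with $\mathcal P_\omega(R)\text{-}\id_RU\leqslant n-1$. Since then $\Ext^n_R(\omega,U)=0$, one cannot read $\Ext^n_R(\omega,M)$ off $U$; instead one forms $L=\Coker\binom{f}{g}$ (with $g\colon U\hookrightarrow W_0$ the first coresolution map) and checks $\Ext^{1\leqslant i\leqslant n-1}_R(\omega,L)=0$ and $\Ext^n_R(\omega,L)\cong\Ext^n_R(\omega,M)$. Applying $(-)_*$ to the injective coresolution of $L$ then \emph{embeds} $\Ext^n_R(\omega,M)$ in an $S$-module $Y$ with $\Tor^S_{1\leqslant j\leqslant n-1}(\omega,Y)=0$. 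For any submodule $X\subseteq\Ext^n_R(\omega,M)\subseteq Y$ this yields a surjection $\Tor^S_n(\omega,Y/X)\twoheadrightarrow\Tor^S_{n-1}(\omega,X)$; hypothesis (1) gives $\Ecograde_\omega\Tor^S_{n-1}(\omega,X)\geqslant n$, and together with the inductive $\Tor^S_{<n-1}(\omega,X)=0$ one concludes $\Tcograde_\omega X\geqslant n$ via Corollary~6.6(2). Your $P^n/Z$ picture could be made to play a similar role once the non-strong bound is in hand, but as written it does not supply that bound.
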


\begin{proof} $(1)\Rightarrow (2)$ We proceed by induction on $n$.

Let $n=1$. Given a module $M$ in $\Mod R$, by Corollary 6.8 we have an exact sequence:
$$0\rightarrow \Ext_R^1(\omega,M)\rightarrow \cTr_\omega M\stackrel{\mu_{\cTr_\omega M}}{\longrightarrow} (\omega\otimes_S\cTr_\omega M)_*\rightarrow\Ext_R^2(\omega,M)\rightarrow 0.$$
Let $N=\Im\mu_{\cTr_\omega M}$ and let $\mu_{\cTr_\omega M}=\alpha\cdot\beta$ (where $\beta:
\cTr_\omega M\twoheadrightarrow N$ and $\alpha: N\rightarrowtail (\omega\otimes_S\cTr_\omega M)_*)$  be the natural
epic-monic decomposition of $\mu_{\cTr_\omega M}$.
Applying the functor $\omega\otimes_S-$ to the following exact sequence:
$$0\rightarrow \Ext_R^1(\omega,M)\rightarrow \cTr_\omega M\stackrel{\beta}{\longrightarrow} N\rightarrow 0,\eqno{(6.5)}$$
we get an exact sequence:
$$\Tor_1^S(\omega,N)\rightarrow \omega\otimes_S\Ext_R^1(\omega,M)\rightarrow \omega\otimes_S\cTr_\omega M\stackrel{1_\omega\otimes \beta}{\longrightarrow} \omega\otimes_SN\rightarrow 0.$$
Since $(1_\omega\otimes \alpha)\cdot (1_\omega\otimes \beta)=1_\omega\otimes \mu_{\cTr_\omega M}$ and $1_\omega\otimes \mu_{\cTr_\omega M}$ is a split monomorphism
by Lemma 6.1(2), $1_\omega\otimes \beta$ is an isomorphism. It follows that $\omega\otimes_S\Ext_R^1(\omega,M)$ is isomorphic to a quotient module
of $\Tor_1^S(\omega,N)$ in $\Mod R$. Then by assumption $\Ecograde_\omega(\omega\otimes_S\Ext_R^1(\omega,M))\geqslant 1$.
Using Corollary 6.6(2) we have that $\omega\otimes_S\Ext_R^1(\omega,M)=0$.

Let $X$ be a submodule of $\Ext_R^1(\omega,M)$ in $\Mod S$.
Then the exact sequence (6.5) induces the following two exact sequences:
$$0\rightarrow \Ext_R^1(\omega,M)/X\rightarrow (\cTr_\omega M)/X\stackrel{\gamma}{\longrightarrow} N\rightarrow 0,$$
$$0\rightarrow X \rightarrow\cTr_{\omega} M\stackrel{\pi}{\longrightarrow} (\cTr_\omega M)/X\to 0\eqno{(6.6)}$$
such that $\beta=\gamma \cdot \pi$. Then $1_\omega\otimes\beta=(1_\omega\otimes\gamma) \cdot (1_\omega\otimes\pi)$. On the other hand,
since $\omega\otimes_S\Ext_R^1(\omega,M)=0$, $\omega\otimes_S(\Ext_R^1(\omega,M)/X)=0$ and $1_\omega\otimes \gamma$ is bijective. So
$1_\omega\otimes \pi$ is also bijective. Hence from the exact sequence:
$$\Tor_1^S(\omega,(\cTr_\omega M)/X)\rightarrow \omega \otimes_SX\rightarrow
\omega \otimes_S\cTr_\omega M\stackrel{1_\omega \otimes \pi}{\longrightarrow} \omega \otimes_S(\cTr_\omega M)/X\rightarrow 0$$ induced by (6.6),
we get that $\omega \otimes_SX$ is isomorphic to a quotient module of
\linebreak
$\Tor_1^S(\omega ,(\cTr_\omega M)/X)$. Then by assumption $\Ecograde_\omega (\omega \otimes_SX)\geqslant 1$. It follows from Corollary 6.6(2)
that $\Tcograde_{\omega}X\geqslant 1$.

Now suppose $n\geqslant 2$. By the induction hypothesis, it suffices to prove that $\sTcograde_\omega \Ext^{n}_R(\omega ,M)\geqslant n$.
Because $\Ext^{n}_R(\omega ,M)\cong \Ext^{n-1}_R(\omega ,\coOmega^1(M))$,
\linebreak
$\sTcograde_\omega \Ext^{n}_R(\omega ,M)\geqslant n-1$ by the induction hypothesis.

Let $X$ be a submodule of $\Ext_R^n(\omega ,M)$ in $\Mod S$.
Because $\sTcograde_\omega \Ext^i_R(\omega ,M)$
\linebreak
$\geqslant i$ for any $1\leqslant i\leqslant n-1$, by Theorem 6.3 there exists a module
$U\in \Mod R$ and a homomorphism $f: U\rightarrow M$
in $\Mod R$ such that $\mathcal{P}_\omega (R)$-$\id_RU\leqslant n-1$ and $\Ext^i_R(\omega ,f)$ is bijective for any $1\leqslant i\leqslant n-1$.
Let $$0\rightarrow U\stackrel{g}{\longrightarrow} W_0 \to W_1 \to \cdots  \to W_{n-1}\to 0$$
be an exact sequence in $\Mod R$ with all $W_i$ in $\mathcal{P}_\omega (R)$ and
$L=\Coker{\binom {f} {g}}$. Then it is not difficult to verify that $\Ext^{1\leqslant i\leqslant n-1}_R(\omega ,L)=0$ and $\Ext^{n}_R(\omega ,M)\cong \Ext^{n}_R(\omega ,L)$.
So we have an exact sequence:
$$0\rightarrow {L}_*\rightarrow {I^0(L)}_*\rightarrow {I^1(L)}_*\rightarrow \cdots \rightarrow {I^{n}(L)}_*\rightarrow Y\rightarrow 0$$
such that $\Ext_R^n(\omega ,L)\subseteq Y$. Applying the functor $\omega \otimes_S-$ to it, we get the following commutative diagram:
$$\xymatrix{ \omega \otimes_S{I^0(L)}{_*} \ar[r] \ar[d]_{\cong}^{\theta_{I^0(L)}}& \omega \otimes_S{I^1(L)}{_*} \ar[r] \ar[d]_{\cong}^{\theta_{I^1(L)}}
&\cdots \ar[r] & \omega \otimes_S{I^{n}(L)}{_*} \ar[r] \ar[d]_{\cong}^{\theta_{I^{n}(L)}}&  \omega \otimes_SY \ar[r] & 0 \\
I^0(L) \ar[r] & I^1(L) \ar[r] &\cdots \ar[r]& I^{n}(L)&.}$$
Because the bottom row in this diagram is exact, so is the upper row. It implies that $\Tor_{1\leqslant i \leqslant n-1}^S(\omega ,Y)=0$.
Since $X$ is isomorphic a submodule of $\Ext_R^n(\omega ,L)(\cong \Ext^{n}_R(\omega ,M))$ in $\Mod S$ and
$\sTcograde_\omega \Ext^{n}_R(\omega ,L)$=$\sTcograde_\omega \Ext^{n}_R(\omega ,M)\geqslant n-1$, $\Tcograde_CX\geqslant n-1$.
Since $\Tor_{n-1}^S(\omega ,Y)=0$, we have an exact sequence:
$$\Tor_{n}^S(\omega ,Y/X)\rightarrow \Tor_{n-1}^S(\omega ,X)\rightarrow 0.$$
By assumption $\sEcograde_\omega \Tor_{n}^S(\omega ,Y/X)\geqslant n$, so
$\Ecograde_\omega \Tor_{n-1}^S(\omega ,X)\geqslant n$. Thus we have $\Ecograde_\omega \Tor_{i}^S(\omega ,X)\geqslant i+1$
for any $0\leqslant i\leqslant n-1$. It follows from Corollary 6.6(2) that $\Tcograde_CX\geqslant n$.

Dually, we get $(2)\Rightarrow (1)$.
\end{proof}

For any $n\geqslant 1$, recall that an artin algebra $\Lambda$ is called \textbf{Auslander $n$-Gorenstein}
provided $\fd_{\Lambda}I^i(_{\Lambda}\Lambda)\leqslant i$ for any $0\leqslant i\leqslant n-1$. The following result
extends \cite[Theorem 3.7]{F}.

\begin{cor} \label{cor: 6.10} Let $\Lambda$ be an artin algebra. Then the following statements are equivalent for any $n\geqslant 1$.
\begin{enumerate}
\item [$(1)\ \ \ $]$\Lambda$ is Auslander $n$-Gorenstein.
\item [$(1)^{op}$]$\Lambda^{op}$ is Auslander $n$-Gorenstein.
\item [$(2)\ \ \ $]$\sgrade_{\Lambda}\Ext^i_{\Lambda}(M, \Lambda)\geqslant i$ for any $M\in \mod \Lambda$ and $1\leqslant i\leqslant n$.
\item[$(2)^{op}$] $\sgrade_{\Lambda}\Ext^i_{\Lambda^{op}}(N, \Lambda)\geqslant i$ for any $N\in \mod \Lambda^{op}$ and $1\leqslant i\leqslant n$.
\item [$(3)\ \ \ $]$\sEcograde_{D(\Lambda)}\Tor_i^{\Lambda}(D(\Lambda),M)\geqslant i$ for any $M\in \mod \Lambda$ and $1\leqslant i\leqslant n$.
\item [$(4)\ \ \ $]$\sTcograde_{D(\Lambda)}\Ext^i_{\Lambda}(D(\Lambda),M)\geqslant i$ for any $M\in \mod \Lambda$ and $1\leqslant i\leqslant n$.
\end{enumerate}
\end{cor}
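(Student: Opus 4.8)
The plan is to take $R=S=\Lambda$ and $\omega=D(\Lambda)$ (which is a semidualizing $(\Lambda,\Lambda)$-bimodule, and an injective cogenerator for $\Mod\Lambda$ with degreewise finite projective resolutions on both sides), and to translate conditions (3) and (4) into the grade conditions (2) and $(2)^{op}$ by means of the Matlis duality $D$, after which the remaining equivalences are either Theorem 6.9 or the classical reformulation of the Auslander condition. First I would assemble a small dictionary from the standard Matlis-duality isomorphisms already exploited in the proof of Corollary 6.5, namely $\Ext^i_\Lambda(D(\Lambda),M)\cong\Ext^i_{\Lambda^{op}}(D(M),\Lambda)$ and (by \cite[Chapter VI, Proposition 5.1 and 5.3]{CE}) $D\Tor_i^\Lambda(D(\Lambda),M)\cong\Ext^i_\Lambda(M,\Lambda)$ for $M\in\mod\Lambda$. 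Since $D(\Lambda)$ is finitely generated with degreewise finite projective resolutions, $\Ext^i_\Lambda(D(\Lambda),M)$ and $\Tor_i^\Lambda(D(\Lambda),M)$ are finitely generated, so grade and strong grade make sense for them. From the displayed isomorphisms one reads off, for a finitely generated left $\Lambda$-module $Y$, that $\Tcograde_{D(\Lambda)}Y=\grade_\Lambda Y$, and, for a finitely generated left $\Lambda$-module $X$, that $\Ecograde_{D(\Lambda)}X=\grade_{\Lambda^{op}}D(X)$. Because $D$ is an exact contravariant duality, submodules of $X$ correspond bijectively to quotients of $D(X)$ and vice versa, so the strong versions transport as $\sTcograde_{D(\Lambda)}K=\sgrade_\Lambda K$ and $\sEcograde_{D(\Lambda)}K=\sgrade_{\Lambda^{op}}D(K)$ for finitely generated $K$.

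With this dictionary in hand I would argue as follows. For condition (4): $\sTcograde_{D(\Lambda)}\Ext^i_\Lambda(D(\Lambda),M)=\sgrade_\Lambda\Ext^i_\Lambda(D(\Lambda),M)=\sgrade_\Lambda\Ext^i_{\Lambda^{op}}(D(M),\Lambda)$, and as $M$ runs over $\mod\Lambda$ the module $D(M)$ runs over $\mod\Lambda^{op}$; hence $(4)\Leftrightarrow(2)^{op}$. For condition (3): $\sEcograde_{D(\Lambda)}\Tor_i^\Lambda(D(\Lambda),M)=\sgrade_{\Lambda^{op}}D\Tor_i^\Lambda(D(\Lambda),M)=\sgrade_{\Lambda^{op}}\Ext^i_\Lambda(M,\Lambda)$, which is exactly (2) (the subscript $\Lambda$ in (2) being applied to the right module $\Ext^i_\Lambda(M,\Lambda)$). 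Thus $(2)\Leftrightarrow(3)$ and $(2)^{op}\Leftrightarrow(4)$. The equivalence $(3)\Leftrightarrow(4)$ is now a direct instance of Theorem 6.9 applied with $R=S=\Lambda$ and $\omega=D(\Lambda)$ — its proof, resting on Theorem 6.3, goes through verbatim in the finitely generated setting since $D(\Lambda)$ is finitely generated — and this recovers the left–right symmetry $(2)\Leftrightarrow(2)^{op}$. Finally, $(1)\Leftrightarrow(2)$ and $(1)^{op}\Leftrightarrow(2)^{op}$ are the known reformulation of the Auslander $n$-Gorenstein condition: one expands the minimal injective coresolution of $_\Lambda\Lambda$ and, via dimension shifting, translates $\fd_\Lambda I^i(_\Lambda\Lambda)\leqslant i$ into $\grade_\Lambda\Ext^i_\Lambda(M,\Lambda)\geqslant i$ for all $M\in\mod\Lambda$ (this is \cite[Theorem 3.7]{F}, extending \cite[Theorem 4.7]{AR1}). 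Chaining these gives $(1)\Leftrightarrow(1)^{op}\Leftrightarrow(2)\Leftrightarrow(2)^{op}\Leftrightarrow(3)\Leftrightarrow(4)$.

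The genuinely delicate points are bookkeeping ones rather than a single deep obstacle. The first is keeping the left/right module sides straight in the four Matlis-duality isomorphisms and checking that "strong $\Ext$-cograde $\geqslant i$" (a condition on \emph{quotients}) dualizes to "strong grade $\geqslant i$" (a condition on \emph{submodules}) in the correct direction. The second is the passage between the $\Mod\Lambda$-formulation of Theorem 6.9 and the $\mod\Lambda$-formulation in the statement: this is harmless because $\Ext^j_\Lambda(D(\Lambda),-)$ and $\Tor_j^\Lambda(D(\Lambda),-)$ commute with direct limits and the constructions in the proofs of Theorems 6.3 and 6.9 (projective presentations, pull-backs, push-outs, mapping cones) preserve finite generation once $\omega$ and the input are finitely generated. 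I expect the bulk of the write-up to be the verification of the dictionary; once it is in place the corollary is essentially immediate.
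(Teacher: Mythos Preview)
Your proposal is correct and follows essentially the same approach as the paper: the paper invokes \cite[Theorem 3.7]{F} for $(1)\Leftrightarrow(1)^{op}\Leftrightarrow(2)\Leftrightarrow(2)^{op}$, uses Theorem 6.9 (noting it restricts to $\mod\Lambda$) for $(3)\Leftrightarrow(4)$, and then proves $(2)\Leftrightarrow(3)$ by exactly the Matlis-duality translation you describe. Your version is organized slightly more systematically---you build the full dictionary first and obtain the extra direct equivalence $(4)\Leftrightarrow(2)^{op}$---but the content is the same.
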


\begin{proof}
$(1)\Leftrightarrow (1)^{op}\Leftrightarrow (2)\Leftrightarrow (2)^{op}$ follow from \cite[Theorem 3.7]{F}.

Since the proof of Theorem 6.9 is also valid while modules are restricted to finitely generated modules over artin algebras,
$(3)\Leftrightarrow (4)$ holds true.

$(3)\Rightarrow (2)$ Let $M\in \mod \Lambda$ and $1\leqslant i\leqslant n$. If $Y$ is submodule of $\Ext^i_{\Lambda}(M,\Lambda)$ in $\mod \Lambda^{op}$,
then $D(Y)$ is isomorphic to a quotient module of $D(\Ext^i_{\Lambda}(M,\Lambda))$ in $\mod \Lambda$.
By \cite[Chapter VI, Proposition 5.3]{CE}, we have $D(\Ext^i_{\Lambda}(M,\Lambda))\cong \Tor_i^{\Lambda}(D(\Lambda),M)$.
So $\Ext^j_{\Lambda^{op}}(Y,\Lambda)\cong\Ext^j_{\Lambda}(D(\Lambda),D(Y))=0$ for any $0\leqslant j\leqslant i-1$ by (3).

(2) $\Rightarrow$ (3) Let $M\in \mod \Lambda$ and $1\leqslant i\leqslant n$. If $X$ is a quotient module of $\Tor_i^{\Lambda}(D(\Lambda),M)$ in $\mod \Lambda$,
then $D(X)$ is isomorphic to a submodule of $D(\Tor_i^{\Lambda}(D(\Lambda),M))$ in $\mod \Lambda^{op}$. By \cite[Chapter VI, Proposition 5.1]{CE}, we have
$D(\Tor_i^{\Lambda}(D(\Lambda),M))$ $\cong$ $\Ext^i_{\Lambda}(M, \Lambda)$. So $\Ext^j_{\Lambda}(D(\Lambda),X)\cong
\Ext^j_{\Lambda^{op}}(D(X),\Lambda)=0$ for any $0\leqslant j\leqslant i-1$ by (2).
\end{proof}

{\bf Acknowledgements.} This research was partially supported by NSFC (Grant Nos. 11171142, 11571164, 11501144),
a Project Funded by the Priority Academic Program Development of Jiangsu Higher Education
Institutions and NSF of Guangxi Province of China (Grant No. 2013GXNSFBA019005).
The authors thank the referee for the useful suggestions.

\end{document}